\providecommand{\Rnn}{\setR^{n \times n}}
\providecommand{\Rnnsym}{\setR^{n \times n}_{\sym}}
\providecommand{\logmean}[1]{\mean{#1}^{\log}}
\providecommand{\bigglogmean}[1]{\biggmean{#1}^{\log}}
\newcommand{\hatxx}{\hat{x} \otimes \hat{x}}
\newcommand{\idhatxx}{\identity - \hat{x} \otimes \hat{x}}
\numberwithin{equation}{section}
\begin{document}

\title{Elliptic Equations With  Degenerate  weights}

\author[Kh. Balci, Diening, Giova, Passarelli di Napoli]{Anna Kh.~Balci \and Lars Diening  \and Raffaella Giova \and Antonia  Passarelli di Napoli}

% \normalsize{Dipartimento di Matematica e Applicazioni "R.
% Caccioppoli"} \\ \normalsize{Universit\`{a} di Napoli ``Federico
% II", via Cintia - 80126 Napoli}\\ \\ \normalsize{Faculty for Mathematics - University of Bielefeld}\\
% \normalsize{Postfach 10 01 31, D-33501 Bielefeld
% }
%  \\ \normalsize{e-mail:
% \email{akhripun@math.uni-bielefeld.de}
% \email{lars.diening@uni-bielefeld.de}
% \email{raffaella.giova@uniparthenope.it}
% \email{antpassa@unina.it}

\begin{abstract}
  We obtain new local \Calderon{}-Zygmund estimates for elliptic
  equations with matrix-valued weights for linear as well as
  non-linear equations. We introduce a novel $\log$-$\setBMO$
  condition on the weight~$\mathbb M$. In particular, we assume
  smallness of the logarithm of the matrix-valued weight
  in~$\setBMO$. This allows to include degenerate, discontinuous
  weights. We provide examples that show the sharpness of the
  estimates in terms of the $\log$-$\setBMO$-norm.
\end{abstract}

\subjclass[2010]{%
  35B65,% Smoothness and regularity of solutions to PDEs
  35J70,% Degenerate elliptic equations
  35R05 % PDEs with low regular coefficients and/or low regular data
}

\keywords{Calderon-Zygmund estimates,
    weighted equations, degenerate equations, Muckenhoupt weights.}

  \thanks{The research of A. Kh.~Balci and Lars Diening was supported
    by the DFG through the CRC 1283.  R. Giova and A. Passarelli di
    Napoli have been partially supported by the Gruppo Nazionale per
    l’Analisi Matematica, la Probabilit`a e le loro Applicazioni
    (GNAMPA) of the Istituto Nazionale di Alta Matematica (INdAM).
    R. Giova has been partially supported by Universit`a degli Studi
    di Napoli Parthenope through the projects “Sostegno alla Ricerca
    individuale”(2015 - 2016 - 2017) and “Sostenibilit`a, esternalit`a
    e uso efficiente delle risorse ambientali”(2017-2019)}
  
\maketitle

%\tableofcontents

\section{Introduction and Statement of the Results}
\label{sec:intr-stat-results}

We study weak solutions of elliptic equations with degenerate
matrix-valued weights. We consider both the linear case as well as the
non-linear case. The main concern is the transfer of regularity from
the data $G \,:\, \Omega \to \setR^n$ to the weak solution
$u \,:\, \Omega \to \setR$ of the equation
\begin{align}
  \label{eq:sysLinear}
  -\divergence (\bbA(x) \nabla u)
  &=
  -\divergence ( \bbA(x) G),
\end{align}
in the linear case  and of
the equation
\begin{align}
  \label{eq:sysM}
  -\divergence \big( \abs{\bbM(x) \nabla u}^{p-2} \bbM^2(x) \nabla u\big)
  &=
  -\divergence \big( \abs{\bbM(x) G}^{p-2} \bbM^2(x) G\big),
\end{align}
for the non-linear case.  Here $\Omega \subset \Rn$ is a domain,
$1<p<\infty$, $\bbA, \bbM\,:\, \Omega \to \Rnnsym$ are symmetric,
positive definite, matrix-valued weights (almost everywhere) with
$\bbM = \bbA^{\frac 12}$, and~$\nabla u$ is the column vector
$(\partial_1 u, \dots, \partial_n u)^T$ and $\abs{\cdot}$ denote the
usual euclidean distance on~$\Rn$.
We do not need to specify boundary values, since we only look at local
solutions.  If $p=2$, then the non-linear equation~\eqref{eq:sysM} reduces
to linear one~\eqref{eq:sysLinear}.

The main objective of our paper is to transfer regularity of the
data~$G$ in terms of weighted Lebesgue spaces~$L^\rho_\omega$ to the
gradient of the solution. Very roughly speaking we want to prove an
estimate of the form
\begin{align}
  \label{eq:Calderon}
  \norm{\nabla u}_{L_\omega^\rho(B)}\lesssim \norm{G}_{L_\omega^\rho(2B)} +
  \text{lower order terms}
\end{align}
where~$L_\omega^\rho$ is the natural corresponding weighted Lebesgue
space. We treat the weights~$\omega$ in the multiplicative sense,
i.e.
$\norm{\nabla u}_{L_\omega^\rho(B)} = \norm{ \omega \nabla
  u}_{L^\rho}(B)$ and $L^\rho_\omega(B)$ corresponds to
$L^\rho(B,\omega^\rho \,dx)$.

Let us start with the linear case in the form of~\eqref{eq:sysLinear}
with~$\bbA=\bbM=\identity$ (unweighted case; $\identity$ is the
identity matrix), then~\eqref{eq:Calderon} just follows from the
standard~$L^q$-estimates for the Laplacian using \Calderon{}-Zygmund
theory for all $\rho \in (1,\infty)$.  If $\bbA$ is a uniformly
elliptic weight~$\bbA(x)$, i.e.
$\lambda_1 \abs{\xi}^2 \leq \skp{\bbA(x) \xi}{\xi} \leq \Lambda_1
\abs{\xi}^2$ for all~$x \in \Omega$ and all~$\xi \in \Rn$, then Meyers
in \cite{Mey63} proved~\eqref{eq:Calderon} for
$\rho \in [2,2+\epsilon)$ for some~$\epsilon>0$. He achieved this by
representing the equation with weight as a perturbation of the
Laplacian. The same idea earlier was used by Boyarski\u{\i} in
\cite{Boy57} for the problem on the plane. The case of bounded and
uniformly continuous (and therefore non-degenerate) positive definite
weights has been studied for example by Morrey~\cite{Mor66}. This has
been extended by Di~Fazio~\cite{DiF96} to the case of uniformly
elliptic weights, i.e.
$\lambda_1 \abs{\xi}^2 \leq \skp{\bbA(x) \xi}{\xi} \leq \Lambda_1
\abs{\xi}^2$ for all~$x \in \Omega$ and all~$\xi \in \Rn$, with
$\bbA \in \setVMO$ (vanishing mean oscillation). The case of systems
has been obtained by Di~Fazio, Fanciullo and Zamboni
\cite{DiFFanZam13}.  The global result for equations has been obtained
by Iwaniec and Sbordone~\cite{IwaSbo98}.

Due to the assumed uniform ellipticity, the results above exclude the
possibility of degenerate weights like
$\abs{x}^{\pm \epsilon} \identity$.  Fabes, Kenig and
Serapioni~\cite{FabKenSer82} studied the case, where
\begin{align}
  \label{eq:ass-fabes-kenig-serapioni}
  \Lambda^{-1} \mu(x) \abs{\xi}^2 \leq \skp{\bbA(x) \xi}{\xi} \leq
  \Lambda \mu(x) \abs{\xi}^2
\end{align}
for some non-negative weight~$\mu$. This is equivalent to say that
$\bbA(x)$ has a uniformly bounded condition number~$\Lambda^2$.
Fabes, Kenig and Serapioni proved that~$u$ is H\"older continuous
provided that~$\mu$ is in the Muckenhoupt class~$\mathcal{A}_2$. Cao,
Mengesha and Phan~\cite{CaoMenPha18} have considered gradient estimates
in the linear case also under the
condition~\eqref{eq:ass-fabes-kenig-serapioni}. Additionally, they
assumed that $\mu$ is of Muckenhoupt class~$\mathcal{A}_2$ and that
$\bbA$ has small~$\setBMO^2_\mu$ norm, where
\begin{align*}
  \abs{\bbA}_{\setBMO^s_\mu} &= \sup_B
                                \bigg( \frac{1}{\mu(B)} \int_B
                                \biggabs{\frac{\abs{\bbA(x) -
                                \mean{\bbA}_B}}{\mu(x)}}^s \mu(x)\,dx
                                \bigg)^{\frac 1s}
\end{align*}
for $s \geq 1$, where the supremum is taken over all balls~$B$ and
$\mean{\bbA}_B$ denotes the mean-value over the ball~$B$. (If $\mu$ is
of Muckenhoupt class~$\mathcal{A}_1$, then
$\setBMO^1_\mu = \setBMO^s_\mu$ with equivalence of norms,
see~\cite{GarJos89}.) Under these conditions Cao, Mengesha and Phan
proved that $\abs{G}^q \mu \in L^1_{\loc}$ implies
$\abs{\nabla u}^q \mu \in L^1_{\loc}$. Their condition allowed to include weights like~$\abs{x}^{\pm \epsilon}\identity$ for small~$\epsilon>0$.  The case of systems has been
covered by the same authors in~\cite{CaoMenPha19}. Our 
condition on the weight~$\bbA$ differs somehow from the previous
ones. Instead of a~$\setBMO$ or $\setBMO^2_\mu$ smallness condition
for~$\bbA$, we use a~$\setBMO$ smallness condition on its logarithm
$\log \bbA$ or equivalently on $\log \bbM = \frac 12 \log \bbA$. This
new \emph{$\log$-$\setBMO$-condition} allows us also to include the
degenerate weights, for example
$\bbM(x) := \abs{x}^{\epsilon} \identity$ and
$\bbM(x) := \abs{x}^{-\epsilon} \identity$ for small~$\epsilon>0$. We
will show in Section~\ref{sec:counterexample} by a counterexample that
this $\log$-$\setBMO$ condition is sharp in terms of the achievable
higher integrability exponent~$q$. 

To our knowledge the
$\log$-$\setBMO$ condition is new even in the context of linear
equations.

Let us also mention that our $\log$-$\setBMO$ condition as well as
the $\setBMO^2_\mu$-condition of Cao, Mengesha and Phan are invariant
under scaling of the equation in the following sense. If we
scale~$\bbM$ by a factor~$t>0$ and $u$ and $G$ by~$1/t$ (which will
scale~$\omega$ by~$t$ and $\bbA$ and~$\mu$ by $\sqrt{t}$), then the
equation remains valid. Moreover, $\abs{\nabla u} \omega$ and
$\abs{G} \omega$ are scaling invariant. Thus, the condition on the
weight~$\bbM$ for the higher integrability of~$\abs{\nabla u} \omega$
with respect to~$\abs{G} \omega$ should be invariant under this
scaling. Now, our $\log$-$\setBMO$ condition is scaling invariant,
since $\abs{\log(t\bbM)}_{\setBMO}=\abs{\log(\bbM)}_{\setBMO}$.  Note
however, that the condition~$\bbA \in \setVMO$ by Di~Fazio\cite{DiF96}
is not scaling invariant and therefore not natural.

Our main result differs also from the one of Cao, Mengesha and
Phan~\cite{CaoMenPha18} since we treat the weight~$\omega$ rather as a
multiplier than a measure. Cao, Mengesha and Phan show for~$p=2$ that
$\abs{G}^{\rho} \omega^2 \in L^1_{\loc}$ implies
$\abs{\nabla u}^{\rho} \omega^2 \in L^1_{\loc}$ (recall
$\mu = \Lambda^{-1} \omega^2$), so the weight stays the same for all
exponents. We, on the other hand, show that
$\abs{G}\, \omega \in L^\rho_{\loc}$ implies
$\abs{\nabla u}\, \omega \in L^\rho_{\loc}$. So in our situation the
original weight~$\omega^2$ (for $p=2$) changes to~$\omega^{\rho}$.

Let us also mention that Baison, Clop, Giova, Orobitg and Passarelli
di Napoli in \cite{BCGOP17} derived estimates in Besov and Triebel-
Lizorkin spaces for slightly non-linear system (non-linear but linear
growth) for uniformly elliptic weights given in a suitable Besov space.

We now turn to the non-linear equations in the form
of~\eqref{eq:sysM}. Recall that the linear
equation~\eqref{eq:sysLinear} is just the special case~$p=2$. Let us
abbreviate
\begin{align}
  \label{eq:def-calA} 
  \begin{aligned}
    A(\xi) &:= \abs{\xi}^{p-2} \xi
    \\
    \mathcal{A}(x,\xi) &:= |\bbM(x)\xi|^{p-2} \bbM^2 \xi = \bbM
    A(\bbM \xi).
  \end{aligned}
\end{align}
Note that we use the upright letter~$A$ for the unweighted version and
the calligraphic letter~$\mathcal{A}$ for the weighted version.
Then we can rewrite~\eqref{eq:sysM} as
\begin{align}
  \label{eq:calA}
  -\divergence \mathcal{A}(\cdot,\nabla u) &=
  -\divergence \mathcal{A}(\cdot,G).
\end{align}
Sometimes in literature, e.g.~\cite{KinZho99}, the system is also
given as
\begin{align}
  \label{eq:sysA}
  -\divergence \big(\skp{\bbA \nabla u}{\nabla u}^{\frac {p-2}{2}}
  \bbA \nabla u \big)
  &= 
  -\divergence \big(\abs{F}^{p-2} F\big)
\end{align}
with some positive definite matrix-valued weight
$\bbA\,:\, \Omega \to \Rnnsym$ (almost everywhere) and
$F\,:\, \Omega \to \Rn$ is the given data. With the transformation
$\bbM = \bbA^{\frac 12}$ and $\mathcal{A}(\cdot,G) = A(F)$ we
can pass from~\eqref{eq:sysA} to~\eqref{eq:sysM} and vice versa.

Note that~$u$ is just the local minimizer of the energy
\begin{align*}
  \mathcal{J}(v) &:= \int_\Omega \tfrac 1p \big( \skp{\bbA \nabla
                   v}{\nabla v}^{\frac p2} \,dx -
                   \int_\Omega \abs{F}^{p-2}F \cdot \nabla v\,dx
  \\
                 &= \int_\Omega \tfrac 1p \abs{\bbM \nabla v}^p\,dx -
                   \int_\Omega \abs{\bbM G}^{p-2} \bbM G \cdot (\bbM
                   \nabla v)\,dx. 
\end{align*}

We assume that our matrix-valued weight has a uniformly bounded
condition number, i.e. 
\begin{alignat}{2}
  \label{eq:ass-M}
  \abs{\bbM(x)} \abs{\bbM^{-1}(x)} &\leq \Lambda &&\text{for
                                     all $x \in \Omega$,}
  \\
  \intertext{or equivalently}
  \label{eq:ass-A}
  \abs{\bbA(x)} \abs{\bbA^{-1}(x)} &\leq \Lambda^2 &\qquad &\text{for
                                     all $x \in \Omega$.}
\end{alignat}
Let us define
\begin{align}
  \label{eq:def-omega}
  \omega(x) &:= \abs{\bbM(x)} = \abs{\bbA(x)}^{\frac 12}.
\end{align}
Then~\eqref{eq:ass-A} is also equivalent to
\begin{align}
  \label{eq:ass-A2}
  \Lambda^{-2} \omega^2(x) \abs{\xi}^2 \leq \skp{\bbA(x) \xi}{\xi} \leq
  \omega^2(x) \abs{\xi}^2 \qquad \text{for all $\xi \in \Rn$ and almost all~$x
  \in \Omega$.}
\end{align}
Note that~\eqref{eq:ass-fabes-kenig-serapioni} with the choice
$\mu(x) := \Lambda^{-1} \omega^2(x)$ is exactly our
condition~\eqref{eq:ass-A2}.  Since $\bbM(x)$ is positive definite, we
can define its logarithm~$\log \bbM(x)$ either by transformation to a
diagonal matrix or by Taylor series. Note that
$\log \bbM = \frac 12 \log \bbA$.
As usual let us denote  by $\dashint_B \dotsm
\,dx$ the mean value integral.

Our main result is as follows.
\begin{theorem}[Linear Case]
  \label{thm:main-linear}
  Let $u$ be a local weak solution of~\eqref{eq:sysLinear}, let~$\bbA$
  satisfy~\eqref{eq:ass-A}, define~$\omega$ by~\eqref{eq:def-omega}.
  Then there exists $\kappa = \kappa(p,n, \Lambda)$ such that for all
  balls~$B_0$ with $4B_0 \subset \Omega$ and all $\rho \in (1,\infty)$
  with
  \begin{align}
    \label{eq:smallness}
    \abs{\log \bbA}_{\setBMO(4B_0)} \leq \kappa \min \biggset{
    \frac{1}{\rho}, 1 - \frac{1}{\rho}}
  \end{align}
  there holds
  \begin{align}
    \label{eq:main-linear}
    \bigg(\dashint_{B_0} \big( \abs{\nabla u}\, \omega
    \big)^\rho\,dx\bigg)^{\frac 1\rho} &\leq c_{\rho_0} \dashint_{2 B_0} 
                                   \abs{\nabla u}\, \omega
                                         \,dx + c_{q_0} \bigg( \dashint_{2 B_0}
                                   \big( \abs{G}\, \omega\big)^\rho\,dx
                                   \bigg)^{\frac 1 \rho}. 
  \end{align}
  for all balls~$B_0$ with $4B_0 \subset \Omega$.
\end{theorem}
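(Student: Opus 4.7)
The plan is to follow the standard Calder\'on--Zygmund program---comparison with a frozen-coefficient problem on Vitali subballs, a good-$\lambda$ level-set inequality, and distribution-function iteration---but with one crucial modification: on each ball $B \subset 4B_0$ we freeze $\bbM$ via the \emph{exponential} matrix average $\bar{\bbM}_B := \exp(\mean{\log \bbM}_B)$ rather than an arithmetic one. The smallness hypothesis $\abs{\log \bbA}_{\setBMO(4B_0)} \leq \kappa$ together with the matrix-valued John--Nirenberg inequality then guarantees that $\bbM \bar{\bbM}_B^{-1}$ and $\bar{\bbM}_B \bbM^{-1}$ belong to $L^s(B)$ for every exponent $s$ up to $c/\kappa$, with norm arbitrarily close to $1$ as $\kappa \to 0$. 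This \emph{higher integrability margin} is the quantitative replacement for the "small $\setBMO$ of $\bbA$" used in the non-degenerate theory, and it is what now permits genuinely degenerate weights such as $\abs{x}^{\pm \epsilon}\identity$.

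On each Vitali ball $B \subset 2B_0$ I would let $v$ solve the frozen Dirichlet problem $-\divergence(\bar{\bbA}_B \nabla v) = -\divergence(\bar{\bbA}_B G)$ with $v - u \in W^{1,2}_0(B)$. Testing the difference equation against $u - v$ and writing $\bbA - \bar{\bbA}_B = \bbM(\bbM - \bar{\bbM}_B) + (\bbM - \bar{\bbM}_B)\bar{\bbM}_B$ yields, via H\"older applied against the error matrix $\bbM \bar{\bbM}_B^{-1} - \identity$, a comparison estimate
\begin{align*}
  \dashint_B \abs{\nabla(u-v)}\,\omega\,dx \leq \eta(\kappa)\dashint_{2B}\abs{\nabla u}\,\omega\,dx + c\dashint_{2B}\abs{G}\,\omega\,dx,
\end{align*}
with $\eta(\kappa) \to 0$ as $\kappa \to 0$. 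For $v$ itself, the linear substitution $y := \bar{\bbM}_B x$ turns the frozen equation into Poisson's equation, so classical interior regularity gives the reverse H\"older inequality $\bigl(\dashint_B \abs{\bar{\bbM}_B \nabla v}^s\,dx\bigr)^{1/s} \lesssim \dashint_{2B}\abs{\bar{\bbM}_B \nabla v}\,dx$ for every finite $s$; passing from $\bar{\bbM}_B \nabla v$ to $\bbM \nabla v$ again only costs an $L^s$-factor controlled by the first paragraph.

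Combining the comparison estimate with the frozen reverse H\"older in the standard way produces, for $N$ large depending on $p,n,\Lambda$, a good-$\lambda$ decay for the maximal function $Mg$, where $g := \abs{\nabla u}\omega$ and $h := \abs{G}\omega$:
\begin{align*}
  \abs{\{Mg > N\lambda\} \cap B_0} \leq \epsilon\, \abs{\{Mg > \lambda\} \cap B_0} + \abs{\{Mh > \delta \lambda\} \cap B_0},
\end{align*}
with $\epsilon$ as small as we please at the cost of shrinking $\kappa$. Multiplying by $\lambda^{\rho-1}$ and integrating in $\lambda$ yields \eqref{eq:main-linear}, with the $L^1$-norm of $g$ on $2B_0$ entering as the initial stopping-time level. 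The constraint $\kappa \lesssim 1/\rho$ enters through the comparison step itself: passing from $\bar{\bbM}_B \nabla v$ to $\bbM \nabla v$ in $L^\rho$ requires the John--Nirenberg exponent $s$ to exceed $\rho$. Dualising the comparison against test functions in the $\omega^{-1}$-weighted space (equivalently, running the argument for the dual equation) produces the symmetric constraint $\kappa \lesssim 1 - 1/\rho$, and together they give \eqref{eq:smallness}.

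The main obstacle is the comparison step. A naive $L^\infty$-bound on $\bbA - \bar{\bbA}_B$ is unavailable when $\bbA$ is degenerate, so one really must factor through $\bbM$, freeze \emph{exponentially}, and absorb the resulting error matrix $\bbM \bar{\bbM}_B^{-1} - \identity$ via the higher integrability margin opened by John--Nirenberg on $\log \bbM$. Balancing this margin against the Lebesgue exponent $\rho$ in the final distribution-function integration is what produces the precise smallness threshold $\kappa \lesssim \min(1/\rho, 1 - 1/\rho)$, and the symmetric dependence on $\rho$ and $\rho'$ reflects the inherent duality of weighted Calder\'on--Zygmund theory.
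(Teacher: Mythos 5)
Your structural ingredients match the paper's: freezing the coefficient at the logarithmic mean $\exp(\mean{\log \bbM}_B)$, using John--Nirenberg on $\log\bbM$ to control $\bbM\bar{\bbM}_B^{-1}$ in $L^s$ for $s$ up to $c/\kappa$ (Proposition~\ref{pro:small}), a comparison estimate, and a duality argument producing the $1-1/\rho$ constraint for $1<\rho<2$ (the paper reduces $1<\rho<2$ to the already-proven case $\rho'\geq 2$ in exactly this way). The genuine gap is in your concluding mechanism. A good-$\lambda$ inequality
\begin{align*}
  \abs{\{Mg > N\lambda\} \cap B_0} \leq \epsilon\, \abs{\{Mg > \lambda\} \cap B_0} + \abs{\{Mh > \delta \lambda\} \cap B_0}
\end{align*}
with a \emph{fixed} dilation constant $N=N(p,n,\Lambda)>1$ closes at exponent $\rho$ only if $\epsilon N^{\rho}<1$: multiplying by $\lambda^{\rho-1}$ and integrating gives $N^{-\rho}\norm{Mg}_\rho^\rho \leq \epsilon \norm{Mg}_\rho^\rho + \dotsb$, so the absorption forces $\epsilon \lesssim N^{-\rho}$ and hence $\kappa$ \emph{exponentially} small in $\rho$. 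That yields a correct but strictly weaker statement in which the smallness threshold degenerates like $N^{-\rho}$; it does not give the claimed threshold $\kappa(p,n,\Lambda)\min\{1/\rho,\,1-1/\rho\}$, whose linear-in-$1/\rho$ scaling Example~\ref{exa:meyers} shows to be sharp. The paper avoids this loss by replacing the level-set iteration with a pointwise sharp-maximal-function estimate (Proposition~\ref{pro:maximal-functions}) combined with a Fefferman--Stein inequality whose constant is \emph{linear} in the exponent, $\norm{f}_q \leq c\,q\,\norm{\mathcal M^\sharp_1 f}_q$ (Theorem~\ref{thm:fefferman-stein}); the absorption condition then reads $c\,q\big(\abs{\log\bbM}_{\setBMO}+\delta\big)\leq \tfrac12$, which is precisely $\abs{\log\bbM}_{\setBMO}\lesssim 1/q$. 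Without this (or some equally quantitative substitute) your argument does not prove the theorem as stated.

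A secondary slip: your frozen problem $-\divergence(\bar{\bbA}_B\nabla v)=-\divergence(\bar{\bbA}_B G)$ carries the rough datum $G$, so ``classical interior regularity'' does \emph{not} give $\nabla v$ arbitrarily high local integrability --- that is the very \Calderon{}--Zygmund question under consideration. The comparison function must solve the \emph{homogeneous} frozen equation, as the paper's $h$ in \eqref{eq:h_sol} does, with the $G$-contribution absorbed into the comparison error. This is fixable, but as written the reverse H\"older step for $v$ is unjustified.
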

\begin{theorem}[Non-Linear Case]
  \label{thm:main}
  Let $u$ be a local weak solution of~\eqref{eq:sysM}, let~$\bbM$
  satisfy~\eqref{eq:ass-M}, define~$\omega$ by~\eqref{eq:def-omega}.
  Then there exists $\kappa = \kappa(p,n, \Lambda)$ such that for
  all balls~$B_0$ with $8B_0 \subset \Omega$ and all
  $\rho \in [p,\infty)$ with
  \begin{align}
    \label{eq:smallness-2}
    \abs{\log \bbM}_{\setBMO(8B_0)} \leq \kappa \frac{1}{\rho}
  \end{align}
  there holds
  \begin{align}
    \label{eq:main}
    \bigg(\dashint_{\frac 12 B_0} \big( \abs{\nabla u}\, \omega
    \big)^\rho\,dx\bigg)^{\frac 1 \rho}
    &\leq c_\rho \dashint_{4 B_0} 
      \abs{\nabla u}\, \omega \,dx
      + c_\rho \bigg( \dashint_{4
      B_0} 
      \big( \abs{G}\, \omega\big)^\rho\,dx \bigg)^{\frac 1 \rho}
  \end{align}
  for all balls~$B_0$ with $8B_0 \subset \Omega$, where $c_\rho =
  c_\rho(p,n,\Lambda,\rho)$. The constant $c_\rho$ is continuous in~$\rho$.
\end{theorem}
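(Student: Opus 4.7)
The plan is to prove Theorem~\ref{thm:main} by a comparison/perturbation argument in the spirit of Caffarelli-Peral and Acerbi-Mingione, where on each ball $B \subset 2B_0$ the weight~$\bbM$ is frozen to the constant matrix $\bbM_B := \exp(\mean{\log \bbM}_B)$ defined via the symmetric matrix functional calculus. The novelty is to extract the quantitative smallness of $\bbM \bbM_B^{-1} - \identity$ from the hypothesis $\abs{\log \bbM}_{\setBMO(8B_0)} \leq \kappa/\rho$ via the John-Nirenberg lemma, obtaining
\begin{equation*}
  \biggl(\dashint_B \bigabs{\bbM \bbM_B^{-1} - \identity}^s dx\biggr)^{1/s} \lesssim s \kappa
\end{equation*}
for every~$s$ up to a threshold of order~$1/\kappa$. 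This is the precise sense in which~$\bbM$ is close to a constant on each ball, and it replaces the classical small-$\setBMO$ assumption on~$\bbA$ used in the works of Di~Fazio and Cao-Mengesha-Phan.

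Next, on each ball~$B$ I solve the frozen Dirichlet problem: let $v \in u + W^{1,p}_0(B)$ satisfy
\begin{equation*}
  -\divergence\bigl(\abs{\bbM_B \nabla v}^{p-2} \bbM_B^2 \nabla v\bigr) = -\divergence\bigl(\abs{\bbM_B G}^{p-2} \bbM_B^2 G\bigr) \quad \text{in } B.
\end{equation*}
After the linear change of variables $y = \bbM_B x$ the equation for~$v$ reduces to the standard $p$-Laplace equation with transformed datum, for which classical local higher-integrability and gradient-Lipschitz estimates give the desired $L^\rho$ control of~$\bbM_B \nabla v$ on~$\tfrac12 B$ in terms of its $L^1$ mean and the $L^\rho$ mean of~$\bbM_B G$. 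Testing the difference of the two equations with~$u-v$ and using the monotonicity of the operator~$A$ then yields a perturbation estimate of the form
\begin{equation*}
  \int_B \abs{\bbM(\nabla u - \nabla v)}^p dx \lesssim \int_B \mathcal{E}_B \bigl(\abs{\bbM \nabla u}^p + \abs{\bbM G}^p\bigr) dx,
\end{equation*}
where~$\mathcal{E}_B$ is a scalar commutator built from $\bbM \bbM_B^{-1} - \identity$, which is in turn controlled by high powers through H\"older's inequality using the John-Nirenberg bound above.

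Combining the two ingredients delivers a reverse-H\"older-type inequality for $\abs{\bbM \nabla u}$ with an absorbable remainder, from which~\eqref{eq:main} follows by a Vitali / good-$\lambda$ argument applied to level sets of the Hardy-Littlewood maximal function of $\abs{\bbM \nabla u}$ on~$\tfrac12 B_0$; comparability of $\abs{\bbM \nabla u}$ with $\abs{\nabla u}\, \omega$ under~\eqref{eq:ass-M} then produces~\eqref{eq:main}. The main obstacle will be the \emph{quantitative} coupling of the John-Nirenberg bound, which is only effective for exponents $s \lesssim 1/\kappa$, with the target integrability~$\rho$. To close the perturbation step with a H\"older exponent of order~$\rho$, the commutator must be summable to that power, and this is what forces the sharp scaling $\kappa \lesssim 1/\rho$ in~\eqref{eq:smallness-2}. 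Two secondary technicalities remain: keeping the constant~$c_\rho$ continuous as $\rho$ ranges over~$[p, \infty)$, which requires explicit bookkeeping through the Calder\'on-Zygmund iteration, and solvability/uniqueness of the frozen Dirichlet problem in the weighted Sobolev setting, which follows from the uniform condition number bound on~$\bbM$.
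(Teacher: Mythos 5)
Your freezing and comparison steps coincide with the paper's: the comparison matrix is the same logarithmic mean $\exp(\mean{\log\bbM}_B)$, the quantitative closeness of $\bbM$ to $\bbM_B$ is extracted from John--Nirenberg exactly as in Proposition~\ref{pro:small}, and the regularity of the frozen problem is obtained by the same linear change of variables. The genuine gap is in the final iteration. You propose to conclude by a Vitali/good-$\lambda$ argument on level sets of the Hardy--Littlewood maximal function. In that scheme one proves a decay estimate of the form
\begin{align*}
  \bigabs{\set{\mathcal M_1(\abs{\nabla u}^p\omega^p) > A\lambda}}
  \;\leq\; \epsilon\, \bigabs{\set{\mathcal M_1(\abs{\nabla u}^p\omega^p) > \lambda}}
  + \bigabs{\set{\mathcal M_1(\abs{G}^p\omega^p) > \delta\lambda}},
\end{align*}
and summing over dyadic levels $\lambda=A^k\lambda_0$ yields $L^q$-integrability of $\abs{\nabla u}^p\omega^p$ only if $\epsilon A^{q}<1$, i.e. $\epsilon\lesssim A^{-q}$. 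Since $\epsilon$ is controlled by the comparison error, hence by a power of $\abs{\log\bbM}_{\setBMO}$, this route forces a smallness condition on $\abs{\log\bbM}_{\setBMO}$ that decays \emph{exponentially} in $\rho$, not linearly as in~\eqref{eq:smallness-2}. Your closing remark attributes the scaling $\kappa\lesssim 1/\rho$ to the summability of the commutator to the power $\rho$; but the exponents actually needed in the commutator estimates stay bounded (slightly above $1$, via Gehring), and the true bottleneck is the level-set iteration. The paper circumvents this by working with the sharp maximal function instead: it proves a Fefferman--Stein inequality $\norm{f}_q\le c\,q\,\norm{\mathcal M^\sharp_1 f}_q$ with constant \emph{linear} in $q$ (Theorem~\ref{thm:fefferman-stein}), so that the perturbation term $c\,q\,(\abs{\log\bbM}_{\setBMO}+\delta)\,\norm{\mathcal M_2\mathcal{V}(\cdot,\nabla z)}_{2q}$ can be absorbed precisely when $q\,\abs{\log\bbM}_{\setBMO}$ is small; this is where the linear scaling comes from, and Example~\ref{exa:meyers} shows it is optimal. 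As written, your argument proves a strictly weaker statement than Theorem~\ref{thm:main}.

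Two secondary points. First, the paper compares the truncated function $z=(u-\mean{u}_{2B_0})\zeta^{p'}$ with the solution of the \emph{homogeneous} frozen problem, whereas you retain the datum $G$ in the frozen equation; doing so destroys the interior $C^{1,\alpha}$ regularity of the comparison function that you invoke, so a second comparison step (removing the datum) would be needed. Second, the qualitative a priori information $\abs{\nabla u}\,\omega\in L^\rho_{\loc}$ must be justified before any absorption; the paper handles this by iterating Gehring's lemma on the reverse H\"older inequality of Proposition~\ref{pro:main-pre}.
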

Note that Theorem~\ref{thm:main-linear} holds for all $\rho\in
(1,\infty)$, while the non-linear case of Theorem~\ref{thm:main}
requires~$\rho \geq p$. We write below more on this difference.

The case $1< p< \infty$ with $\bbA=\bbM=\identity$ (unweighted case)
has been obtained by Iwaniec~\cite{Iwa83} and Di Benedetto and
Manfredi~\cite{DiBMan93}.  The limiting case~$\rho=\infty$ is slightly
different and better expressed in terms
of~$\mathcal{A}(\cdot,\nabla u)$ and~$F$. It has been shown by Di
Benedetto and Manfredi~\cite{DiBMan93} for~$p>2$ and by Diening,
\Kaplicky{} and Schwarzacher~\cite{DieKapSch12} for all $1<p<\infty$
that $F \in \setBMO$ implies
$\mathcal{A}(\cdot,\nabla u) \in \setBMO$. In~\cite{DieKapSch12} is
has also been shown that~$\setBMO$ can be replaced by $C^{0,\alpha}$
for small~$\alpha>0$. All of these results are also consequences of
the point-wise estimates obtained in~\cite{BreCiaDieKuuSch18} by
Breit, Cianchi, Diening, Kuusi and Schwarzacher. The same authors
proved estimates up to the boundary
in~\cite{BreCiaDieKuuSch18b}. \Calderon{}-Zygmund estimates in the
space~$W^{1,2}$ have been studied by Cianchi and
Maz'ya~\cite{CiaMaz19}. Estimates in Besov and Triebel-Lizorkin spaces
up to differentiability with arbitrary integrability one have been
studied in the planar case of equations for $p>2$ by Balci, Diening
and Weimar~\cite{BalDieWei20}.  Gradient estimates for the right hand
side in non-divergence form were obtained for equations by Kuusi and
Mingione see \cite{KuuMin13}, \cite{KuuMin12}. The case of systems was
considered by Duzaar and Mingione in \cite{DuzMin10} and by Kuusi and
Mingione in \cite{KuuMin14}, \cite{KuuMin18}.

Let us turn to the weighted case.  In \cite{KinZho99} Kinnunen and
Zhou extended~\eqref{eq:main} to the case~$1<p<\infty$, also for
uniformly elliptic weights with $\bbA \in \setVMO$ (vanishing mean
oscillation). It is also enough to assume that~$\bbA$ has small
$\setBMO$-norm. They have also obtained global results
in~\cite{KinZho01}. Note that both conditions are not scaling
invariant (as mentioned above).

The condition~$\rho \geq p$ in our theorem is due to the non-linear
situation~$p \neq 2$. The case~$\rho=p$ corresponds to the context of
weak solutions, while~$\max \set{1,p-1}<\rho<p$ corresponds the case
of very weak solutions.  Although it is conjectured by Iwaniec and
Sbordone~\cite{IwaSbo94} that $\rho > \max \set{1,p-1}$ should be the
maximal range for~$\rho$, this has not been shown yet. In the same
paper they prove~\eqref{eq:main} in the unweighted case
for~$\rho> p-\epsilon$ for small~$\epsilon>0$. A qualitative control
of~$\epsilon$ has been obtained in~\cite{KinZho97} by Kinnunen and
Zhou, which implies the optimal range~$\rho > \max \set{1,p-1}$ but
only if~$\abs{p-2}$ is small. This results has been extended to
uniformly elliptic weights with $\bbA \in \setVMO$ by Greco and
Verde~\cite{GreVer00}.  

There are only a few results on the non-linear case with degenerate
weights. Cruz-Uribe, Moen and Naibo proved H\"older continuity of the
solution for $1<p< \infty$ in \cite{CruMoeNai13} also using a
Muckenhoupt condition.  For matrix-valued weights there exists also a
weaker notion of matrix-valued Muckenhoupt classes~$\mathcal{A}_p$ by
Roudenko~\cite{Rou03}.  This weaker notion was for example used by
Cruz-Uribe, Moen and Rodney~\cite{CruMoeRod16} to prove partial
regularity for mappings of finite distortion, where~\eqref{eq:ass-A2}
is replaced by a condition with different lower and upper growth.

The outline of this article is as follows.  In
 Section~\ref{sec:weights} we introduce and present
new facts  on scalar and matrix-valued weights and their logarithm. This also includes
\Poincare{}-type estimates and new John-Nirenberg type estimates.

In  Section~\ref{sec:cald-zygm-estim} we then derive our
\Calderon{}-Zygmund estimates. We begin in 
Subsection~\ref{ssec:caccioppoli-estimate} with Caccioppoli and reverse
H\"older inequalities.  The comparison system is constructed in 
Subsection~\ref{sec:comparison-estimate}. The comparison estimate is
proved in Proposition~\ref{pro:comparison}, Subsection~\ref{sec:comparison-estimate} and conclude
the decay estimate in Subsection~\ref{ssec:decay-estimates}. Finally,
the proof of our main theorems are presented in the
Subsection~\ref{ssec:main}.

In the final  Section~\ref{sec:counterexample} we show by means of
examples that our results are sharp. In particular, we show that the
smallness condition on~$\rho \abs{\log \bbA}_{\setBMO}$ is optimal to
obtain $L^\rho$ integrability of $\abs{\nabla u}^p \omega^p$.

%% ------------------------------------------------------------
\section{On Scalar and Matrix-Valued Weights}
\label{sec:weights}
%% ------------------------------------------------------------

In this section we present the necessary tools on scalar and
matrix-valued weights. We also introduce a novel smallness condition
in terms of the logarithm of the weight. After this we show that this
condition implies suitable \Poincare{} type estimates.

%% ------------------------------------------------------------
\subsection{Matrix-Valued Weights and Logarithms}
\label{sec:notat-basic-prop}

%% ------------------------------------------------------------

By $\Rnnsym$ we denote the symmetric, real-valued matrices.  By
$\setR^{n \times n}_{\geq 0}$ we denote the cone of symmetric,
real-valued, positive semidefinite matrices and by
$\setR^{n\times n}_{> 0}$ the subset of positive definite matrices.
For $\bbX, \bbY \in \Rnnsym$, we write $\bbX \geq \bbY$ if $\bbX -\bbY
\in \setR^{n \times n}_{\geq 0}$. 

We say that $\bbM \,:\, \Omega \to \Rnn_{\geq 0}$ is a
\emph{(matrix-valued) weight} if $\bbM$ is almost everywhere positive
definite. We say that~$\omega\,:\, \Omega \to [0,\infty)$ is a
\emph{(scalar) weight} if~$\omega$ is positive almost everywhere.

For simplicity we assume in this section that our weights are defined
on the whole~$\Rn$ (instead of the subset~$\Omega$). If they are defined
only on~$\Omega$, they have to be extended in a suitable way
to~$\Rn$. This is not difficult due to the locality of our main
theorem, Theorem~\ref{thm:main}. 

By~$\abs{\cdot}$ we denote the euclidean norm on~$\setR^n$. For
$\bbL \in \Rnnsym$, let $\abs{\bbL}$ denote the spectral norm (which is
just the matrix norm induced by the euclidean norm for vectors).  We
write $B_R(x_0)\subset\Rn$ for the open ball of radius $R>0$ and
center $x_0\in\Rn$. For a ball~$B$ we denote by~$r_B$ the radius and
by~$x_B$ the center of~$B$. For the mean value of a function over a
ball~$B$ we write $\mean{f}_B :=\dashint_{B}f(x)\,dx$. We write
$\indicator_U$ for the indicator function of the set~$U$.

We will denote by $c$ a general constant that may vary on different
occasions, even within the same line of estimates.  Relevant
dependencies on parameters and special constants will be suitably
emphasized using parentheses or subscripts. We also write $f \lesssim
g$ if $f \leq c\,g$. We write $f \eqsim g$ if $f \lesssim g$ and $g
\lesssim f$.

By $L^p(\Rn)$ we denote the usual Lebesgue space with
norm~$\norm{\cdot}_p$ and by $L^p_{\loc}(\Rn)$ its local version
($L^p$ on compact subsets). By $p'$ we denote the conjugate exponent.

For $1< p < \infty$ and a weight $\omega \in L^p_{\loc}(\Rn)$ with
$\omega^{-1} \in L^{p'}_{\loc}(\Rn)$ we define the weighted spaces
\begin{align*}
  L^p_\omega(\Rn) &:= \set{f\,:\, \Rn \to \setR\,:\, \omega f \in L^p(\Rn)}
\end{align*}
with norm $\norm{f}_{p,\omega} := \norm{f\, \omega}_p$. We write
$L^p(\Rn, \mu)$ for the $L^p$-space with measure~$\mu$. So
$L^p_\omega(\Rn) = L^p(\Rn, \omega^p \,dx)$.  Note that we
use~$\omega$ as a multiplicative weight (not as a measure).  The dual
space of $L^p_\omega(\Rn)$ is $L^{p'}_{1/\omega}(\Rn)$. Both
$L^p_\omega(\Rn)$ and $L^{p'}_{1/\omega}(\Rn)$ are Banach functions
spaces mapping to~$L^1_{\loc}(\Rn)$. Let $W^{1,p}(\Omega)$ denote the
usual Sobolev space. Let~$W^{1,p}_{\loc}(\Omega)$ be its local version
and $W^{1,p}_0(\Omega)$ be the one with zero boundary values.
Let~$W^{1,p}_\omega(\Omega)$ be the weighted Sobolev space, which
consists of functions~$u \in W^{1,1}(\Omega)$ such that
$u, \abs{\nabla u} \in L^p_\omega(\Omega)$. We
equip~$W^{1,p}_\omega(\Omega)$ with the norm
$\norm{u}_{L^p_\omega(\Omega)} + \norm{\nabla
  u}_{L^p_\omega(\Omega)}$. Let $W^{1,p}_{0,\omega}(\Omega)$ denote
the subspace of functions with zero boundary values.

For every~$\bbL \in \Rnnsym$ we can consider the matrix
exponential~$\exp(\bbL) \in \Rnn_{>0}$, i.e
$\exp \,:\, \Rnnsym \to \Rnn_{>0}$. Moreover, there exists a unique
inverse mapping $\log\,:\, \Rnn_{>0} \to \Rnnsym$.  Thus, since
$\bbM\,:\, \Rn \to \Rnnsym$ is almost everywhere positive definite, we
can define its logarithm $\log \bbM\,:\, \Rn \to \Rnnsym$.  Both
$\exp$ and $\log$ can be defined by transformation to a diagonal
matrix or by Taylor series.

Of particular interest to us are the logarithm means of~$\omega$ and
$\bbM$. We define the logarithmic means
\begin{align}
  \label{eq:logmean}
  \begin{aligned}
    \logmean{\omega}_B &:= \exp( \mean{\log \omega}_B),
    \\
    \logmean{\bbM}_B &:= \exp( \mean{\log \bbM}_B).
  \end{aligned}
\end{align}
Recall, that the dual space of $L^p_\omega$ is $L^{p'}_{1/\omega}$.
It is interesting to observe, that the logarithmic mean is compatible
with this operation, since
\begin{align}
  \label{eq:mean-omega-inv}
  \bigglogmean{\frac{1}{\omega}}_B &= \exp( -\mean{\log \omega}_B) =
                                 \frac{1}{\logmean{\omega}_B}. 
\end{align}
The logarithmic mean also commutes with inversion.  Indeed, using the identities
 $\log(\bbM^{-1}) = -\log \bbM$ and
$(\exp(\bbL))^{-1} = \exp(-\bbL)$ we get
\begin{align*}
  \logmean{\bbM^{-1}}_B
  &=  \exp \big( -\mean{\log (\bbM)}_B\big)
  = \big(\exp (\mean{\log (\bbM)}_B)\big)^{-1} = (\logmean{\bbM}_{B})^{-1}.
\end{align*}

\subsection{Muckenhoupt Weights}
\label{sec:muckenhoupt-weights}

We give a brief review on Muckenhoupt weights. Let $1<p<\infty$. A
weight~$\mu\in L^1_{\loc}(\Rn)$ is called
an~$\mathcal{A}_p$-Muckenhoupt weight if and only if
\begin{align*}
  [\mu]_{\mathcal{A}_p} &:= \sup_B\bigg(\dashint_B
                \mu\,dx\bigg)\bigg(\dashint_B \mu^{-\frac{1}{p-1}}\,dx\bigg)^{p-1}
                < \infty
\end{align*}
where the supremum is taken over all balls~$B$.

If $\mu$ is an~$\mathcal{A}_p$-Muckenhoupt weight then the maximal
operator~$M$ is bounded on~$L^p(\Rn,\mu)$.  Let us reformulate it in the
language of $L^p_\omega(\Rn)$. The weight $\omega^p$ is
an~$\mathcal{A}_p$-Muckenhoupt weight if and only if
\begin{align}
  \label{eq:def-muckenhoupt}
  [\omega^p]_{\mathcal{A}_p}^{\frac 1p} &= \sup_B\bigg(\dashint_B
                \omega^p\,dx\bigg)^{\frac 1p} \bigg(\dashint_B
                     \omega^{-p'}\,dx\bigg)^{\frac{1}{p'}} 
                < \infty
\end{align}
The property of being a Muckenhoupt weight can also be characterized
by its logarithmic means. Indeed, if~$\omega^p$ is
an~$\mathcal{A}_p$-Muckenhoupt weight, then by the help of Jensen's
inequality for all balls~$B$
\begin{align}
  \label{eq:muckenhoupt-log}
  \begin{aligned}
    \bigg( \dashint_B \omega^p \,dx \bigg)^{\frac 1p} &\leq c_1
    \logmean{\omega}_B,
    \\
    \bigg( \dashint_B \omega^{-p'} \,dx \bigg)^{\frac 1{p'}} &\leq c_2
    \logmean{\omega^{-1}}_B = c_2 \frac{1}{\logmean{\omega}_B}.
  \end{aligned}
\end{align}
with $c_1,c_2 = [\omega^p]_{\mathcal{A}_p}^{\frac 1p}$. On the other hand,
if~\eqref{eq:muckenhoupt-log} holds, then $\omega^p$ is
an $\mathcal{A}_p$-Muckenhoupt weight and $[\omega^p]_{\mathcal{A}_p}^{\frac
  1p} \leq c_1 c_2$ using $\logmean{\omega}_B \logmean{\omega^{-1}}_B=1$.

\subsection{Weighted \Poincare{} Estimate}
\label{sec:poinc-type-estim}

In this section we present a \Poincare{} type estimate in terms of
multiplicative weights.  The following Proposition is a scaling invariant
version of \cite[Theorem~3.3]{DreDur08}.

\begin{proposition}
  \label{pro:DreDur}
  Let $1 < p < \infty$ and $\theta \in (0,1]$ such that
  $\theta p \geq \max \set{1,\frac{np}{n+p}}$. Furthermore, let $B$ be
  a ball and assume that~$\omega$ is a weight on~$2B$ with
  \begin{align}
    \label{eq:DreDur}
    \sup_{B' \subset 2B} \bigg(\dashint_{B'}  \omega^p\,dx \bigg)^{\frac 1p}
    \bigg(\dashint_{B'}  \omega^{-(\theta p)'}\,dx \bigg)^{\frac 1{(\theta p)'}} &\leq c_1,
  \end{align}
  where the supremum is taken over all balls~$B'$ contained in~$2B$.
  Then
  \begin{align*}
    \bigg(\dashint_{B} \biggabs{ \frac{u-\mean{u}_{B}}{r_B}}^p
    \omega^p  \,dx\bigg)^{\frac 1p}
    &\le
      c_2\, \bigg(\dashint_{B} \big(\abs{\nabla
      u}\, \omega \big)^{\theta p} \,dx\bigg)^{\frac{1}{\theta p}},
  \end{align*}
  where $c_2=c_2(c_1,n,p)$.
\end{proposition}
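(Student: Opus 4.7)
The plan is to reduce the estimate to a two-weight bound for a truncated Riesz potential of $|\nabla u|$ and then prove that bound by a Hedberg-type truncation exploiting \eqref{eq:DreDur}. First, I would invoke the classical pointwise Poincar\'e representation
\[
|u(x) - \mean{u}_B| \le c(n) \int_B \frac{|\nabla u(y)|}{|x-y|^{n-1}}\,dy, \qquad \text{for a.e. } x \in B,
\]
so that the whole proposition reduces to bounding the truncated Riesz potential on the right, call it $J(x)$, in $L^p(B, \omega^p\,dx)$ by $r_B\,|B|^{1/p - 1/(\theta p)}\,\|\,|\nabla u|\,\omega\|_{L^{\theta p}(B)}$.

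For this two-weight Riesz potential bound I would use Hedberg truncation. Fix $t \in (0, 2r_B)$ and split $J(x) = J_{<t}(x) + J_{\ge t}(x)$ at the level $|x-y|=t$. A dyadic annular decomposition yields $J_{<t}(x) \lesssim t\,M(|\nabla u|\indicator_{2B})(x)$. For $J_{\ge t}(x)$ I would factor $|\nabla u| = (|\nabla u|\omega)\,\omega^{-1}$, apply H\"older with exponents $\theta p$ and $(\theta p)'$, and then run a second dyadic annular decomposition from scale $t$ up to scale $r_B$ so that \eqref{eq:DreDur} can be invoked on each sub-ball; this converts local $L^{(\theta p)'}$-averages of $\omega^{-1}$ into reciprocals of local $L^p$-averages of $\omega$, up to the constant $c_1$. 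Optimizing over $t$ produces a pointwise Hedberg-type bound combining $M(|\nabla u|\indicator_{2B})(x)$ with a global $L^{\theta p}$-average of $|\nabla u|\,\omega$ over $2B$, with exponents determined by $n, p, \theta$. The hypothesis $\theta p \ge \max\{1, np/(n+p)\}$ enters precisely as the condition $(\theta p)^* \ge p$ that makes these Hedberg exponents admissible (i.e.\ that the unweighted Sobolev gain is at least $p$).

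Raising this pointwise bound to the $p$-th power, integrating against $\omega^p\,dx$ on $B$, and controlling the maximal-function contribution by the $\mathcal{A}_p$-weighted boundedness of $M$ then closes the proof. The required $\mathcal{A}_p$-property of $\omega^p$ on $2B$ follows from \eqref{eq:DreDur} (the case $\theta = 1$, or via H\"older from the stated case $\theta < 1$) together with the equivalence \eqref{eq:def-muckenhoupt}, with constant $\lesssim c_1$. The main technical obstacle is the scaling bookkeeping: one must track how the $|B|$-normalizations in \eqref{eq:DreDur} propagate through the two dyadic decompositions and through the Hedberg optimization, so that precisely the factor $r_B$ (and not a spurious power) appears on the right-hand side and all constants depend only on $c_1, n, p$. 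Once this is set up carefully, the weighted maximal inequality finishes the argument.
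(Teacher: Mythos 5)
Your opening step (the pointwise representation of $u-\mean{u}_B$ by the truncated Riesz potential $I_1(\indicator_B\abs{\nabla u})$) coincides with the paper's \eqref{eq:Riesz}, and your reading of the hypothesis $\theta p\geq\max\set{1,\frac{np}{n+p}}$ as the Sobolev admissibility condition is correct. But from there the paper argues by duality: it pairs $u-\mean{u}_B$ with $g\in L^{p'}_{1/\omega}(B)$, moves $I_1$ onto $g$ by self-adjointness, applies H\"older with exponents $\theta p$ and $(\theta p)'$, and then invokes the Muckenhoupt--Wheeden theorem \cite[Theorem~4]{MucWhe74} to bound $\norm{I_1(\indicator_B g)/\omega}_{(\theta p)'}$ by $\norm{g/\omega}_{p'}$ under \eqref{eq:DreDur}. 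Your plan instead tries to prove the underlying weighted Riesz-potential bound directly by Hedberg truncation, and this is where there is a genuine gap.

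Concretely: in the far part $J_{\ge t}(x)$, after H\"older and \eqref{eq:DreDur} on the annulus at scale $2^jt$ you obtain terms of the form
\begin{align*}
  c_1\,(2^jt)\,\bigg(\dashint_{B_{2^jt}(x)}\big(\abs{\nabla u}\,\omega\big)^{\theta p}\,dy\bigg)^{\frac1{\theta p}}\bigg(\dashint_{B_{2^jt}(x)}\omega^p\,dy\bigg)^{-\frac1p}.
\end{align*}
In the unweighted Hedberg argument the analogous geometric sum is dominated by its innermost term because the local average is controlled by $(2^jt)^{-n/q}\norm{f}_q$; here the extra factor $(\dashint\omega^p)^{-1/p}$ has no pointwise upper bound (the weight may degenerate), so the sum is dominated neither by a single scale nor by a ``global average'' as you claim, and the optimization in $t$ cannot be carried out uniformly in $x$. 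The same mismatch reappears at the end: after raising the pointwise bound to the $p$-th power and integrating against $\omega^p\,dx$, the maximal-function contribution involves $M(\abs{\nabla u}\indicator_{2B})$ --- the maximal function of the \emph{unweighted} gradient --- raised to a power dictated by $\theta$ and integrated against $\omega^p$. The one-weight bound $\norm{Mf}_{L^p(\omega^p)}\lesssim\norm{f}_{L^p(\omega^p)}$ does not apply, since the hypothesis only controls $\abs{\nabla u}\,\omega$ in $L^{\theta p}$, not in $L^p$; what is needed is a two-exponent weighted inequality for the (fractional) maximal operator, which is essentially the content of the Muckenhoupt--Wheeden theorem you are trying to bypass. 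So the proposal silently assumes the hard analytic input. To repair it you should either cite \cite{MucWhe74} outright (most cleanly after dualizing, as the paper does) or reproduce its good-$\lambda$ comparison of $I_1$ with the fractional maximal function.
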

\begin{proof}
  The result follows from~\cite[Theorem~3.3]{DreDur08} for a fixed
  ball with radius~$1$, which is formulated in a slightly different
  way. Since the condition~\eqref{eq:DreDur} is scaling invariant
  w.r.t. $x \leftrightarrow Rx$, we obtain the general estimate simply
  by scaling. Note that in the statement of
  ~\cite[Theorem~3.3]{DreDur08} the case~$\alpha=0$ (in notation  from~\cite{DreDur08}), which we need is
  excluded in the statement but included in the proof. For the sake of
  completeness let us restate their proof in a scaling invariant
  formulation.

  Recall that the Riesz potential of a measurable function~$f\in \setR^n$ is
  \begin{align*}
    (I_1 f)(x):=\int_{\setR^n} \frac{f(y)}{
    \abs{x-y}^{n-1}}\, dy.
  \end{align*}
  We use the following well-known estimate (see for example~\cite[Section~15.23]{HeiKilMar06}) 
  \begin{align}
    \label{eq:Riesz}
    \dashint_B \abs{v(x)-v(y)}\, dy\le c\,\int_B \frac{\abs{\nabla
    v(y)}}{\abs{x-y}^{n-1}}\, dy
    = c\, I_1\big( \indicator_B \abs{\nabla v}\big)(x).  
  \end{align}
  Let $g \in L^{p'}_{\frac 1 \omega}(B)$ with
  \begin{align*}
    \bigg(\dashint_B \Bigabs{\frac{g(x)}{\omega(x)}}^{p'}\,dx
    \bigg)^{\frac 1{p'}} &\leq 1.
  \end{align*}
  Applying \eqref{eq:Riesz}, we get
  \begin{align*}
    \biggabs{ \dashint_B (v(x) - \mean{v}_{B})g(x)\,dx}
    &\lesssim
      \biggabs{ \dashint_B I_1(\indicator_B \nabla v)(x) g(x)\,dx}
    \\
    &=  \biggabs{ \dashint_B (\nabla v)(x) I_1 (\indicator_B
      g)(x)\,dx }
    \\
    &\leq c\, \bigg(\dashint_B (\abs{\nabla v}\, \omega)^{\theta p} \,dx\bigg)^{\frac
      1{\theta p}}
      \bigg(\dashint_B \biggabs{\frac{I_1 (\indicator_B
      g)(x)}{\omega(x)}}^{(\theta p)'}\,dx
      \bigg)^{\frac 1{(\theta p)'}}
  \end{align*}
  where we used the selfadjointness of~$I_1$ and  Hölder's inequality . Now,
  condition~\eqref{eq:DreDur}, our assumption
  $\theta p \geq \max \set{1,\frac{np}{n+p}}$ and~\cite[Theorem~4]{MucWhe74} give
  \begin{align*}
    \bigg(\dashint_B \biggabs{\frac{I_1 (\indicator_B
    g)(x)}{\omega(x)}}^{(\theta p)'}\,dx
    \bigg)^{\frac 1{(\theta p)'}}
    &\lesssim 
    \bigg(\dashint_B \biggabs{\frac{g(x)}{\omega(x)}}^{p'}\,dx
    \bigg)^{\frac 1{p'}} \leq 1.
  \end{align*}
  This and the previous estimate shows
  \begin{align*}
    \dashint_B \abs{(v(x) - \mean{v}_{B})g(x)}\,dx
    &\lesssim  \bigg(\dashint_B (\abs{\nabla v}\, \omega)^{\theta p} \,dx\bigg)^{\frac
      1{\theta p}}.
  \end{align*}
  Taking the supremum over all admissible~$g$ proves the claim.
\end{proof}

\subsection{John-Nirenberg-Type Estimates}
\label{sec:john-nirenberg-type}

We  present several estimates of John-Nirenberg type for matrix-valued and
scalar weights in terms of its logarithm.

For a ball~$B_R$ with radius~$R$ we define the local~$\setBMO(B_R)$
space as the set of function~$f \in L^1(B_R)$ such that the semi-norm
\begin{align}
  \label{eq:ref-local-BMO}
  \abs{f}_{\setBMO(B_R)} &:=\sup_{\substack{0<r\le
                           R \\ x \in
  B_R}}\bigg(\frac{1}{\abs{B_r(x)}}\int_{B_r(x)  \cap B_R}
  \abs{f(x)-\mean{f}_{B_r(x)}}\, dx\bigg)
\end{align}
is finite.

First we will show that the $\setBMO$-estimates for the matrix-valued weight
$\log\bbM$ transfers to scalar weight $\log \omega$.
\begin{lemma}
  \label{lem:M-omega-BMO}
  For a matrix-valued weight~$\bbM$ and $\omega= \abs{\bbM}$ there holds
  \begin{align}
    \label{eq:M-omega-BMO-b}
    \dashint_B \abs{\log \omega(x) - \mean{\log \omega}_B}\,dx &\leq 2\,
    \dashint_B \abs{\log \bbM(x) - \mean{\log \bbM}_B}\,dx .
  \end{align}
  Moreover, $\abs{\log \omega}_{\setBMO(B)} \leq 2\, \abs{\log \bbM}_{\setBMO(B)}$.
\end{lemma}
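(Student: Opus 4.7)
The plan is to reduce everything to the fact that the maximal eigenvalue functional on symmetric matrices is $1$-Lipschitz with respect to the spectral norm, and then apply the standard trick that the mean can be replaced by any constant at the cost of a factor of~$2$.

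First I would identify $\log\omega$ in terms of $\log\bbM$. Since $\bbM(x)$ is symmetric and positive definite, the spectral norm is the largest eigenvalue, $\omega(x)=\abs{\bbM(x)}=\lambda_{\max}(\bbM(x))$. As $\log$ is monotone on $(0,\infty)$, the spectral calculus gives
\begin{align*}
  \log \omega(x) = \log \lambda_{\max}(\bbM(x)) = \lambda_{\max}(\log \bbM(x)).
\end{align*}
Thus the scalar function $\log\omega$ is obtained from the matrix-valued function $\bbL(x):=\log\bbM(x)$ by applying $\lambda_{\max}$ pointwise.

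Next I would use the Lipschitz bound $\abs{\lambda_{\max}(\bbX)-\lambda_{\max}(\bbY)}\leq \abs{\bbX-\bbY}$ for $\bbX,\bbY\in\Rnnsym$, which follows from the variational characterization $\lambda_{\max}(\bbX)=\sup_{\abs{v}=1}\skp{\bbX v}{v}$. Writing $\bar\bbL:=\mean{\bbL}_B$, this yields the pointwise estimate
\begin{align*}
  \abs{\log\omega(x)-\lambda_{\max}(\bar\bbL)}
  = \abs{\lambda_{\max}(\bbL(x))-\lambda_{\max}(\bar\bbL)}
  \leq \abs{\bbL(x)-\bar\bbL}.
\end{align*}
Averaging over $B$ gives $\dashint_B \abs{\log\omega-\lambda_{\max}(\bar\bbL)}\,dx \leq \dashint_B\abs{\log\bbM-\mean{\log\bbM}_B}\,dx$.

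Finally I would invoke the elementary inequality that for any integrable scalar $f$ and any constant $c$, $\dashint_B\abs{f-\mean{f}_B}\,dx\leq 2\dashint_B\abs{f-c}\,dx$, applied with $f=\log\omega$ and $c=\lambda_{\max}(\bar\bbL)$. Combining with the previous step yields~\eqref{eq:M-omega-BMO-b}, and then taking the supremum over all subballs of $B$ gives the $\setBMO$ inequality. I do not expect any real obstacle; the only subtle point is the identification $\log\lambda_{\max}(\bbM)=\lambda_{\max}(\log\bbM)$, which is justified by the spectral theorem since $\bbM(x)$ is symmetric and positive definite.
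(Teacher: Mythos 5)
Your proof is correct and follows essentially the same route as the paper: both rest on the identification $\log\omega(x)=\lambda_{\max}(\log\bbM(x))$ together with the $1$-Lipschitz continuity of the maximal-eigenvalue functional on $\Rnnsym$ (which the paper phrases as sub-additivity of $\mu$). The only cosmetic difference is how the factor $2$ arises: the paper compares values at pairs of points and uses Jensen's inequality on the resulting double average, whereas you compare to the fixed constant $\lambda_{\max}(\mean{\log\bbM}_B)$ and then use that the mean is a $2$-optimal constant approximation in $L^1$; both are valid.
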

\begin{proof}
  Let us abbreviate $\bbH(x) := \log \bbM(x)$ and
  $\bbH(y) := \log \bbM(y)$. For $\bbX \in \Rnnsym$ let $\mu(\bbX)$
  denote\footnote{ $\mu(\bbX)$ is just the logarithmic norm induced by
    the euclidean norm~$\abs{\cdot}$.}  the maximal eigenvalue of
  $\bbX \in \Rnnsym$. Then $\mu$ is sub-additive. As a consequence,
  \begin{align}
    \label{eq:M-omega-mu-subadd}
    \bigabs{\mu\big(\bbH(x)\big) - \mu\big(\bbH(y)\big)}
    &\leq \abs{\bbH(x)-\bbH(y)}.
  \end{align}
  Since~$\mu(\bbX) = \log\abs{\exp(\bbX)}$ for $\bbX \in \Rnnsym$, we
  have
  \begin{align*}
    \mu(\bbH(x)) = \log
    \abs{\exp(\bbH(x))} = \log \omega(x).
  \end{align*}
  Therefore, we can rewrite~\eqref{eq:M-omega-mu-subadd} as
  \begin{align}
    \label{eq:M-omega-BMO}
    \abs{\log(\omega(x)) - \log(\omega(y))}
    &\leq \abs{\bbH(x) -
      \bbH(y)} = \abs{\log \bbM(x) - \log \bbM(y)}.
  \end{align}
  This implies that
  \begin{align*}
    \dashint_B \abs{\log \omega(x) - \mean{\log \omega}_B}\,dx
    &\leq \dashint_B \dashint_B \abs{\log \omega(x) - \log
      \omega(y)}\,dy\,dx
    \\
    &\leq \dashint_B \dashint_B \abs{\log \bbM(x) - \log \bbM(y)}\,dy\,dx
    \\
    &\leq 2\,
    \dashint_B \abs{\log \bbM(x) - \mean{\log \bbM}_B}\,dx 
  \end{align*}
  using Jensen's inequality in the first step
  and~\eqref{eq:M-omega-BMO} in the second step.  This proves
   estimate~\eqref{eq:M-omega-BMO-b}. As a consequence $\abs{\log %
    \omega}_{\setBMO(\Rn)} \leq 2\, \abs{\log
    \bbM}_{\setBMO(\Rn)}$. The local version $\abs{\log
    \omega}_{\setBMO(B)} \leq 2\, \abs{\log \bbM}_{\setBMO(B)}$
  follows by simple modifications.
\end{proof}

\begin{proposition}
  \label{pro:small}
  There  exist  constants~$\kappa_1=\kappa_1(n,\Lambda)>0$ and
  $c_3>0$ such that the following holds.  If $q \geq 1$ and $\bbM$ is
  a matrix-valued weight with
  $\abs{\log \bbM}_{\setBMO(B)} \leq \frac{\kappa_1}{q}$, then
  \begin{align*}
    \Bigg(\dashint_{B}\bigg(\frac{
    |\bbM-\logmean{\bbM}_B|}{\abs{\logmean{\bbM}_{B}}}\bigg)^q dx
    \bigg)^{\frac 1q}
    &\le c_3\, % \kappa_1.
    q \abs{\log \bbM}_{\setBMO(B)}.
  \end{align*}
  The same holds with~$\bbM$ replaced by a scalar weight~$\omega$.
\end{proposition}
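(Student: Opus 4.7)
Set $\bbH := \log \bbM$ and $\bbH_B := \mean{\log \bbM}_B$, so that $\bbM = \exp \bbH$ and $\logmean{\bbM}_B = \exp \bbH_B$. The whole argument turns on the pointwise inequality
\begin{align*}
  \frac{\abs{\bbM(x) - \logmean{\bbM}_B}}{\abs{\logmean{\bbM}_B}} &\leq
  \abs{\bbH(x) - \bbH_B}\, \exp\bigl(\abs{\bbH(x) - \bbH_B}\bigr).
\end{align*}
To derive it I start from the Duhamel representation for the matrix exponential,
\begin{align*}
  \exp(\bbH) - \exp(\bbH_B) &= \int_0^1 \!\int_0^1 \exp(t\, \bbY(s))\,(\bbH - \bbH_B)\, \exp((1-t)\, \bbY(s))\,dt\,ds,
\end{align*}
where $\bbY(s) := s \bbH + (1-s) \bbH_B$. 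Taking spectral norms and using that for a symmetric $\bbX$ one has $\abs{\exp(\tau \bbX)} = \exp(\tau \mu(\bbX))$ with $\mu$ the largest eigenvalue, together with the convexity bound $\mu(\bbY(s)) \leq s\,\mu(\bbH) + (1-s)\,\mu(\bbH_B)$ and the Lipschitz estimate $\abs{\mu(\bbH) - \mu(\bbH_B)} \leq \abs{\bbH - \bbH_B}$ already used in~\eqref{eq:M-omega-mu-subadd}, the double integral is controlled by $\exp(\mu(\bbH_B))\, \exp(\abs{\bbH - \bbH_B})$; dividing by $\abs{\logmean{\bbM}_B} = \exp(\mu(\bbH_B))$ yields the claim.

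\textbf{From pointwise to $L^q$.} I raise the pointwise inequality to the $q$-th power, integrate over $B$, and split via H\"older's inequality with exponents $2$ and $2$:
\begin{align*}
  \bigg(\dashint_B \big(\abs{\bbH - \bbH_B}\, e^{\abs{\bbH - \bbH_B}}\big)^q dx\bigg)^{\!\frac{1}{q}} &\leq
  \bigg(\dashint_B \abs{\bbH - \bbH_B}^{2q}\, dx\bigg)^{\!\frac{1}{2q}}
  \bigg(\dashint_B e^{2q\,\abs{\bbH - \bbH_B}}\, dx\bigg)^{\!\frac{1}{2q}}.
\end{align*}
The $L^s$ form of John-Nirenberg (obtained from the exponential form by integrating the level-set tail) gives $\bigl(\dashint_B \abs{\bbH - \bbH_B}^s dx\bigr)^{1/s} \leq c\,s\, \abs{\log \bbM}_{\setBMO(B)}$ for every $s \geq 1$, so the first factor is bounded by $c\, q\, \abs{\log \bbM}_{\setBMO(B)}$. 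The classical exponential John-Nirenberg inequality keeps the second factor bounded by an absolute constant, provided $2q\, \abs{\log \bbM}_{\setBMO(B)}$ stays strictly below the John-Nirenberg threshold; choosing $\kappa_1$ smaller than half that threshold, the hypothesis $\abs{\log \bbM}_{\setBMO(B)} \leq \kappa_1/q$ guarantees this, and multiplying the two estimates yields the desired $c_3\, q\, \abs{\log \bbM}_{\setBMO(B)}$.

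\textbf{Matrix John-Nirenberg and main obstacle.} Both John-Nirenberg bounds above are standard for scalar BMO functions, and I transfer them to the matrix setting by applying them to the scalar function $g(x) := \abs{\bbH(x) - \bbH_B}$. The reverse triangle inequality for the spectral norm gives $\abs{g(x) - g(y)} \leq \abs{\bbH(x) - \bbH(y)}$, so arguing as in the proof of Lemma~\ref{lem:M-omega-BMO} one obtains $\abs{g}_{\setBMO(B)} \leq 2\,\abs{\log \bbM}_{\setBMO(B)}$; the smallness hypothesis also controls $\mean{g}_B \leq \abs{\log \bbM}_{\setBMO(B)}$, so $g$ and $g - \mean{g}_B$ are interchangeable in the needed norms up to absolute constants. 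The main obstacle is the opening pointwise step: since $\bbH(x)$ and $\bbH_B$ need not commute, one cannot write $\bbM \logmean{\bbM}_B^{-1} = \exp(\bbH - \bbH_B)$ as in the scalar case, and the Duhamel representation is precisely what allows us to circumvent this non-commutativity. The scalar case of the proposition is strictly easier: the identity $\omega/\logmean{\omega}_B = \exp(\log \omega - \mean{\log \omega}_B)$ combined with the elementary inequality $\abs{e^t - 1} \leq \abs{t}\, e^{\abs{t}}$ delivers the analogous pointwise bound directly, and the rest of the argument transfers verbatim.
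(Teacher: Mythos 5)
Your proof is correct and follows essentially the same route as the paper: the pointwise bound $\abs{\bbM-\logmean{\bbM}_B}/\abs{\logmean{\bbM}_B}\le\abs{\bbH-\mean{\bbH}_B}\exp(\abs{\bbH-\mean{\bbH}_B})$ with $\bbH=\log\bbM$, followed by Cauchy--Schwarz and the polynomial and exponential forms of John--Nirenberg, exactly as in \eqref{eq:small-aux1}--\eqref{eq:small-aux3}. The only differences are that you derive the matrix-exponential perturbation inequality via Duhamel's formula and $\abs{\exp(\bbX)}=\exp(\mu(\bbX))$ (thereby correctly handling the denominator $\abs{\logmean{\bbM}_B}=\exp(\mu(\mean{\bbH}_B))$, which the paper silently replaces by the larger $\exp(\abs{\mean{\bbH}_B})$ when quoting $\abs{\exp(\bbX+\bbY)-\exp(\bbX)}\le\abs{\bbY}\exp(\abs{\bbY})\exp(\abs{\bbX})$), and that you are more explicit about reducing the matrix-valued John--Nirenberg estimates to the scalar case via $g=\abs{\bbH-\mean{\bbH}_B}$.
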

\begin{proof}
  Let us abbreviate  $\bbH(x) := \log \bbM(x)$. Then we have
  $\bbM(x) = \exp(\bbH(x))$ and
  $\logmean{\bbM}_B = \exp(\mean{\bbH}_B)$ and
  \begin{align*}
    \textrm{I} &:=     \Bigg(\dashint_{B}\bigg(\frac{
                 |\bbM-\logmean{\bbM}_B|}{\abs{\logmean{\bbM}_{B}}}\bigg)^q dx
                 \bigg)^{\frac 1q} =
                 \Bigg( \dashint_{B}\left(\frac{
                 |\exp(\bbH)-\exp(\mean{\bbH}_B)|}{\exp(\abs{\mean{\bbH}_{B}})}\right)^q\,dx
                 \Bigg)^{\frac 1q}
                 .
  \end{align*}
  Note that for all
  (hermetian) matrices~$\bbX,\bbY$ we have
  \begin{align*}
    \abs{\exp(\bbX+\bbY) - \exp(\bbX)} &\leq \abs{\bbY}
                                         \exp(\abs{\bbY})
                                         \exp(\abs{\bbX}), 
  \end{align*}
  Therefore, with $\bbX = \mean{\bbH}_B$ and $\bbY = \bbH - \mean{\bbH}_B$ we estimate
  \begin{align*}
    \textrm{I} 
    &\leq \Bigg( \dashint_{B}\big( \abs{\bbH - \mean{\bbH}_B}
      \exp(\abs{\bbH-\mean{\bbH}_B}) \big)^q\,dx \Bigg)^{\frac 1q}.
  \end{align*}
  So by H\"older's inequality
  \begin{align}
    \label{eq:small-aux1}
    \textrm{I} 
    &\leq \Bigg( \dashint_{B}\abs{\bbH - \mean{\bbH}_B}^{2q}\,dx
      \Bigg)^{\frac 1{2q}} \cdot 
      \Bigg( \dashint_{B}
      \exp(2q \abs{\bbH-\mean{\bbH}_B})\,dx \Bigg)^{\frac 1{2q}}.
  \end{align}
  It follows from the classical John-Nirenberg estimate in the form of~\cite[Corollary~3.1.8]{Gra14modern} that
  \begin{align}
    \label{eq:small-aux2}
    \Bigg( \dashint_{B}\abs{\bbH - \mean{\bbH}_B}^{2q}\,dx
      \Bigg)^{\frac 1{2q}} \leq c\,(c\,q!)^{\frac 1{2q}}
    \abs{\bbH}_{\setBMO(B)} \leq c\, q  \abs{\bbH}_{\setBMO(B)},
  \end{align}
  where we have used Stirling's formula in the last step.
  Another consequence of the John-Nirenberg estimate in the form
  of~\cite[Corollary~3.1.7]{Gra14modern} is that there
  exists~$\kappa_1 >0$ such that
  $q \abs{\bbH}_{\setBMO(B)} \leq \kappa_1$ implies
  \begin{align}
    \label{eq:small-aux3}
    \Bigg( \dashint_{B}
    \exp(2q \abs{\bbH-\mean{\bbH}_B})\,dx \Bigg)^{\frac 1{2q}}
    &\leq c^{\frac 1{2q}} \leq c.
  \end{align}
  Note that the results in~\cite{Gra14modern} are stated for
  $\setBMO(\Rn)$, but a simple extension from $\setBMO(B)$ to
  $\setBMO(\Rn)$ allows to deduce the local version. Moreover, the
  estimates for the vector valued~$\setBMO$ follow immediately from
  the scalar valued ones.

  Now, the claim follows from~\eqref{eq:small-aux1},
  \eqref{eq:small-aux2} and \eqref{eq:small-aux3}.
\end{proof}
We will now apply Proposition~\ref{pro:small} to deduce certain
properties for scalar weights.
\begin{proposition}
  \label{pro:small-scalar}
  There exists a constant~$\gamma>0$ such that the following holds for
  all weights~$\omega$.
  \begin{enumerate}
  \item
    \label{itm:small-scalar1}
    If $\abs{\log \omega}_{\setBMO(B)} \leq \frac{\gamma}{s}$ with
    $s \geq 1$, then
    \begin{align*}
    \bigg(\dashint_B \omega^s \,dx \bigg)^{\frac 1s}
      &\le 2\, \logmean{\omega}_B
    \end{align*}
    \item
    \label{itm:small-scalar1_inv}
    If $\abs{\log \omega}_{\setBMO(B)} \leq \frac{\gamma}{s}$ with
    $s \geq 1$, then
    \begin{align*}
    \bigg(\dashint_B \omega^{-s} \,dx \bigg)^{\frac 1s}
      &\le 2\, \frac{1}{\logmean{\omega}_B}.
    \end{align*}
  \item \label{itm:Ap1} If
    $\abs{\log \omega}_{\setBMO}\leq \gamma \min \set{\frac 1p, \frac
      1{p'}}$ with $1 < p < \infty$, then $\omega^p$ is an
    $\mathcal{A}_p$-Muckenhoupt weight and
    \begin{align*}
      [\omega^p]_{\mathcal{A}_p}^{\frac 1p} =  \sup_B \bigg(\dashint_B
      \omega^p \,dx \bigg)^{\frac 1p} 
      \bigg(\dashint_B \omega^{-p'} \,dx \bigg)^{\frac 1{p'}} &\leq 4.
    \end{align*}
  \item \label{itm:Ap2} Let $1 < p < \infty$ and $\theta \in (0,1)$
    such that $\theta p > 1$. If $\abs{\log \omega}_{\setBMO}\leq \gamma \min \set{\frac
      1p, 1-\frac 1{\theta p}}$, then
    \begin{align*}
      \bigg(\dashint_B \omega^p \,dx \bigg)^{\frac 1p}
      \bigg(\dashint_B \omega^{-(\theta p)'} \,dx \bigg)^{\frac
      1{(\theta p)'}} &\leq 4.
    \end{align*}
    (This is the ensures that~\eqref{eq:DreDur} in
    Proposition~\ref{pro:DreDur} holds.)
  \end{enumerate}
\end{proposition}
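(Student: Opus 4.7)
My plan is to derive all four statements from Proposition~\ref{pro:small} applied in its scalar form (to $\omega$ or to $\omega^{-1}$), together with the triangle inequality and the inversion identity~\eqref{eq:mean-omega-inv}.

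For part~\eqref{itm:small-scalar1} I would write $\omega(x) = \logmean{\omega}_B + (\omega(x) - \logmean{\omega}_B)$, take $L^s$ means over $B$, and apply the triangle inequality to obtain
\begin{align*}
  \bigg( \dashint_B \omega^s\,dx \bigg)^{\frac 1s}
  &\leq \logmean{\omega}_B + \logmean{\omega}_B \bigg( \dashint_B
    \Bigabs{\frac{\omega - \logmean{\omega}_B}{\logmean{\omega}_B}}^s
    dx\bigg)^{\frac 1s}.
\end{align*}
Choosing $\gamma \leq \kappa_1$ so that the hypothesis $\abs{\log\omega}_{\setBMO(B)} \leq \gamma/s$ implies the hypothesis of the scalar version of Proposition~\ref{pro:small} with $q=s$, the second factor is bounded by $c_3\, s\, \abs{\log\omega}_{\setBMO(B)} \leq c_3\, \gamma$. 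Taking $\gamma := \min\{\kappa_1, 1/c_3\}$ then yields the factor $1+c_3\gamma \leq 2$, giving the claim.

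For part~\eqref{itm:small-scalar1_inv} I would apply~\eqref{itm:small-scalar1} to the weight $\omega^{-1}$, noting that $\log(\omega^{-1}) = -\log\omega$ so the $\setBMO$ norm is unchanged, and that $\logmean{\omega^{-1}}_B = 1/\logmean{\omega}_B$ by~\eqref{eq:mean-omega-inv}.

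For parts~\eqref{itm:Ap1} and~\eqref{itm:Ap2} I would simply combine the two bounds. In~\eqref{itm:Ap1}, the assumption $\abs{\log\omega}_{\setBMO} \leq \gamma \min\{1/p,1/p'\}$ allows applying~\eqref{itm:small-scalar1} with $s=p$ and \eqref{itm:small-scalar1_inv} with $s=p'$; multiplying the two estimates, $\logmean{\omega}_B$ and its reciprocal cancel and we get $2 \cdot 2 = 4$. For~\eqref{itm:Ap2}, the assumption $\abs{\log\omega}_{\setBMO} \leq \gamma \min\{1/p, 1-1/(\theta p)\} = \gamma \min\{1/p, 1/(\theta p)'\}$ lets us apply~\eqref{itm:small-scalar1} with $s=p$ and \eqref{itm:small-scalar1_inv} with $s=(\theta p)'$, again giving the product bound~$4$ after cancellation of $\logmean{\omega}_B$.

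There is no real obstacle here: once Proposition~\ref{pro:small} is in hand, the whole proposition is essentially bookkeeping. The only minor point requiring care is the correct choice of the universal constant $\gamma$, which must simultaneously satisfy $\gamma \leq \kappa_1$ (to unlock Proposition~\ref{pro:small}) and $c_3\gamma \leq 1$ (to absorb the John-Nirenberg correction into the factor $2$). Everything else follows from the triangle inequality and the dual identity~\eqref{eq:mean-omega-inv}.
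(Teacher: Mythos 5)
Your proposal is correct and follows essentially the same route as the paper: part (i) via the $L^s$ triangle inequality around $\logmean{\omega}_B$ combined with Proposition~\ref{pro:small} and the choice $\gamma=\min\{\kappa_1,1/c_3\}$, part (ii) by applying (i) to $\omega^{-1}$ using \eqref{eq:mean-omega-inv}, and parts (iii)--(iv) by multiplying the two resulting bounds. No gaps.
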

\begin{proof}
  We begin with~\ref{itm:small-scalar1}.  Let $\kappa_1$ and~$c_3$ be
  as in Proposition~\ref{pro:small}. We define $\gamma := \min
  \set{\kappa_1, 1/c_3}$. Now, assume that $\abs{\log
    \omega}_{\setBMO(B)} \leq \frac \gamma s$. Then it follows with
  Proposition~\ref{pro:small} that
  \begin{align*}
    \bigg(\dashint_B \omega^s \,dx \bigg)^{\frac 1s}
    &\leq
      \bigg(\dashint_B \abs{\omega-\logmean{\omega}_B}^s \,dx
      \bigg)^{\frac 1s} + \bigabs{\logmean{\omega}_B}
    \\
    &\leq 
      \logmean{\omega}_B
      \big( c_3\, s\,\abs{\log \omega}_{\setBMO(B)}
      +1\big)
      \\
    &\leq 
      2\,\logmean{\omega}_B.
  \end{align*}
  This proves~\ref{itm:small-scalar1}.

  Now, \ref{itm:small-scalar1_inv} is just~\ref{itm:small-scalar1}
  applied to~$\frac 1 \omega$ using also
that  $\logmean{\omega^{-1}}_B = (\logmean{\omega}_B)^{-1}$.
  
  Let us now prove~\ref{itm:Ap1}. If follows
  from~\ref{itm:small-scalar1} applied to~$\omega$ and~$p$,
  resp. $1/\omega$ and~$p'$, that
  \begin{align*}
    \bigg( \dashint_B \omega^p \,dx \bigg)^{\frac 1p}
    \bigg(\dashint_B \omega^{-p'} \,dx \bigg)^{\frac 1{p'}}
    &\leq 2
      \logmean{\omega}_B\cdot 2 \logmean{1/\omega}_B = 4.
  \end{align*}
  The proof of~\ref{itm:Ap2} is analogous to the one of~\ref{itm:Ap1}.
\end{proof}
\begin{remark}
  \label{rem:inverse}
  Since $\log(\bbM^{-1}) = -\log(\bbM)$ and $\log(\omega^{-1}) =-
  \log(\omega)$ for weights~$\bbM$ and~$\omega$, it is possible to
  apply Proposition~\ref{pro:small} and
  Proposition~\ref{pro:small-scalar} to~$\bbM^{-1}$ and~$\omega^{-1}$.
\end{remark}

%% ------------------------------------------------------------
\section{\Calderon{}-Zygmund Estimates}
\label{sec:cald-zygm-estim}
%% ------------------------------------------------------------

In this section we develop the full higher integrability result for
the solutions of our weighted $p$-Laplace equation.
Let~$\Omega \subset \Rn$ be a domain with Lipschitz boundary
and~$1<p<\infty$. Let~$\bbM$ be a matrix-valued weight on~$\Rn$ with
uniformly bounded condition number, i.e.~\eqref{eq:ass-M} holds. Since
$\bbM$ is symmetric and positive definite~\eqref{eq:ass-M} is
in fact equivalent to
\begin{align}
  \label{eq:mon-Mxi}
  \Lambda^{-1} \omega(x)\, \abs{\xi} \leq \abs{\bbM \xi} \leq
  \omega(x)\, \abs{\xi} \qquad \text{for all $\xi \in \Rn$}
\end{align}
and also
\begin{align}
  \label{eq:mon-M}
  \Lambda^{-1} \omega(x) \identity \leq \bbM(x) \leq \omega(x) \identity
\end{align}
both for all~$x \in \Omega$.

We assume in the following that the logarithmic weight~$\log \bbM$ has
small~$\setBMO$-norm, i.e.
\begin{align}
  \label{eq:smallness2}
  \abs{\log \bbM}_{\setBMO(\Omega)} \leq \kappa.
\end{align}
Hence, by Lemma~\ref{lem:M-omega-BMO} we have
$\abs{\log \omega}_{\setBMO(\Omega)} \leq 2 \kappa$.

Note that we do keep track of the dependence of the constants in terms
of~$\Lambda$ but we keep track of the dependence on~$\kappa$.

We assume that~$\kappa$ is so small that by
Proposition~\ref{pro:small-scalar} $\omega^p$ is an
$\mathcal{A}_p$-Muckenhoupt weight. In particular, smooth functions
are dense in~$W^{1,p}_\omega(\Omega)$. 

In the following let~$u \in W^{1,p}_\omega(\Omega)$ be a weak solution
of~\eqref{eq:sysM} with $G \in L^p_\omega(\Omega)$, i.e.
\begin{align}
  \label{eq:weak}
  \int_\Omega \abs{\bbM \nabla u}^{p-2} \bbM^2 \nabla u \cdot
  \nabla \xi\,dx
  &=
  \int_\Omega \abs{\bbM G}^{p-2} \bbM^2 G \cdot
  \nabla \xi\,dx
\end{align}
for all $\xi \in C^\infty_0(\Omega)$ or equivalently all
$\xi \in W^{1,p}_{0,\omega}(\Omega)$. Note that the existence of a
weak solution is ensured by standard arguments from the calculus of
variations, since~$\omega^p$ is an~$\mathcal{A}_p$-Muckenhoupt weight.

%% ------------------------------------------------------------

%% ------------------------------------------------------------
\subsection{Caccioppoli Estimate and Reverse H\"older's Inequality}
\label{ssec:caccioppoli-estimate}
%% ------------------------------------------------------------

We begin with the standard Caccioppoli estimates.
\begin{proposition}[Caccioppoli]
  \label{pro:caccioppoli}
  For all balls~$B$ with $2B \subset \Omega$ there holds
  \begin{align*}
    \dashint_B |\nabla u|^p \omega^p\,dx
    &\le c\, \dashint_{2B}\biggabs{\frac{u -
      \mean{u}_{2B}}{r_B}}^p \omega^p \,dx +c\,\dashint_{2B} |G|^{p} \omega^p
      \,dx.
  \end{align*}
\end{proposition}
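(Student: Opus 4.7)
The argument is a standard Caccioppoli computation adapted to the matrix-valued weight $\bbM$. I would test the weak formulation \eqref{eq:weak} with $\xi = \eta^p\,(u-\mean{u}_{2B})$, where $\eta \in C^\infty_0(2B)$ is a cutoff function satisfying $\eta\equiv 1$ on $B$, $0\le\eta\le 1$, and $|\nabla\eta|\le c/r_B$. Since $\omega^p$ is an $\mathcal{A}_p$-Muckenhoupt weight and $u-\mean{u}_{2B}\in W^{1,p}_\omega(2B)$, we have $\xi\in W^{1,p}_{0,\omega}(\Omega)$, so this is an admissible test function. Computing $\nabla\xi = \eta^p\nabla u + p\,\eta^{p-1}(u-\mean{u}_{2B})\nabla\eta$ and substituting into \eqref{eq:weak} gives
\begin{align*}
  \int \eta^p |\bbM\nabla u|^p\,dx
  &= -p\int \eta^{p-1}(u-\mean{u}_{2B})\,|\bbM\nabla u|^{p-2}\,\bbM^2\nabla u\cdot\nabla\eta\,dx
  \\
  &\quad + \int \eta^p |\bbM G|^{p-2}\,\bbM^2 G\cdot\nabla u\,dx
  + p\int \eta^{p-1}(u-\mean{u}_{2B})\,|\bbM G|^{p-2}\,\bbM^2 G\cdot\nabla\eta\,dx.
\end{align*}

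\textbf{Key estimates.} The crucial observation is that, since $\bbM$ is symmetric, $\bbM^2 v\cdot w = \bbM v\cdot \bbM w$ for any vectors $v,w$. Combined with \eqref{eq:mon-Mxi}, this gives
\begin{align*}
  \abs{\bbM^2\nabla u\cdot\nabla\eta} \le \abs{\bbM\nabla u}\,\abs{\bbM\nabla\eta} \le \omega\,\abs{\nabla\eta}\,\abs{\bbM\nabla u},
\end{align*}
and similarly for the $G$-terms. This reduces every factor to $\abs{\bbM\nabla u}$, $\abs{\bbM G}$, and $\omega\,\abs{\nabla\eta}$, which is exactly what the weighted Young inequality consumes cleanly. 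Applying Young's inequality with small parameter $\delta$ to the three right-hand side terms, we absorb one copy of $\int\eta^p\abs{\bbM\nabla u}^p\,dx$ into the left-hand side and are left with
\begin{align*}
  \int \eta^p \abs{\bbM\nabla u}^p\,dx
  &\le c\int_{2B} \abs{\nabla\eta}^p\,\omega^p\,\abs{u-\mean{u}_{2B}}^p\,dx
  + c\int_{2B}\abs{\bbM G}^p\,dx.
\end{align*}

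\textbf{Conversion to $\omega$-bounds.} Finally I would invoke \eqref{eq:mon-Mxi} in both directions: from below, $\abs{\bbM\nabla u}^p \ge \Lambda^{-p}\omega^p\abs{\nabla u}^p$, and from above, $\abs{\bbM G}^p\le \omega^p\abs{G}^p$. Restricting the left-hand side to the sub-ball $B$ (where $\eta = 1$), dividing by $|B|$, and using $|\nabla\eta|\le c/r_B$ gives the claimed inequality. There is no real obstacle here — the only subtlety is the systematic use of the identity $\bbM^2 v\cdot w=\bbM v\cdot\bbM w$ so that the non-diagonal weight behaves like a scalar factor $\omega$ after Cauchy–Schwarz, and tracking the constants so that the final dependence is only on $p$, $n$, $\Lambda$.
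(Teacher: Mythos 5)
Your proposal is correct and follows essentially the same route as the paper's proof: the same test function $\eta^p(u-\mean{u}_{2B})$, the same use of the symmetry identity $\bbM^2 v\cdot w=\bbM v\cdot\bbM w$ together with \eqref{eq:mon-Mxi} to reduce all weight factors to $\omega$, and the same Young-inequality absorption. No substantive differences.
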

\begin{proof}
  Fix a smooth cut-off function $\eta$ with
  $\indicator_B \leq \eta \leq \indicator_{2B}$ and
  $|\nabla\eta|\le \frac{c}{r_B}$. Using the test
  function~$\eta^p (u-\mean{u}_{2B})$ in  \eqref{eq:weak}  we get
  \begin{align*}
    \int \!\! |\bbM \nabla u|^{p-2} \bbM
    \nabla u \cdot \bbM \nabla(\eta^p(u\!-\!\mean{u}_{2B}) \,dx
    &= \!\!
    \int \!\! |\bbM G|^{p-2}
    \bbM G \cdot \bbM \nabla(\eta^p(u\!-\!\mean{u}_{2B})\,dx.
  \end{align*}
  Using~\eqref{eq:mon-Mxi} we obtain by standard calculations
  \begin{align*}
    \int_{2B}\eta^p |\nabla u|^p \omega^p\,dx
    &\leq c\,\int_{2B}\eta^{p-1} |\nabla u|^{p-1}
      \biggabs{ \frac{u-\mean{u}_{2B}}{r_B}} \,\omega^p\,dx
    \\
    &\quad +c\,\int_{2B}\eta^p|G|^{p-1}
      \abs{\nabla u} \,\omega^p\,dx
    \\
    &\quad +c\,\int_{2B}\eta^{p-1}|G|^{p-1}       \biggabs{
      \frac{u-\mean{u}_{2B}}{r_B}}  \omega^p \,dx.
  \end{align*}
  We use Young's inequality, absorb the term with $\eta^p \abs{\nabla
    u}^p \omega^p$ and obtain
  \begin{align*}
    \int_B|\nabla u|^{p} \omega^p\,dx
    &\le c\, \int_{2B}\biggabs{\frac{u -
      \mean{u}_{2B}}{r_B}}^p \omega^p \,dx +c\,\int_{2B} |G|^{p} \omega^p
      \,dx.
  \end{align*}
  This proves the claim.
\end{proof}
From the Caccioppoli estimate we derive as usual the reverse H\"older
estimate.
\begin{proposition}
  \label{pro:reverse}
  There exists~$\kappa_2 >0$ and~$\theta \in (0,1)$ such that for all
  balls~$B$ with~$2B \subset \Omega$ there holds: if
  $\abs{\log \bbM}_{\setBMO(2B)} \leq \kappa_2=\kappa_2(p,n,\Lambda)$,
  then
  \begin{align*}
    \dashint_B \abs{\nabla u}^p \omega^p\,dx
    &\lesssim \, \bigg( \dashint_{2B} \abs{\nabla u}^{\theta p}\omega^p \,dx
      \bigg)^{\frac 1\theta}  +\dashint_{2B} |G|^{p} \omega^p
      \,dx.
  \end{align*}
\end{proposition}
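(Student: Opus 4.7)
The plan is to derive the reverse Hölder inequality by the classical Gehring--Giaquinta combination of the Caccioppoli inequality (Proposition~\ref{pro:caccioppoli}) with the self-improving weighted Poincaré inequality (Proposition~\ref{pro:DreDur}). The whole role of $\kappa_2$ is to convert the matrix-valued $\setBMO$-smallness of $\log \bbM$ into the scalar Muckenhoupt-type balance~\eqref{eq:DreDur} that Proposition~\ref{pro:DreDur} requires.

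First, I would fix $\theta \in (0,1)$ depending only on $p,n$ with $\theta p > \max\set{1, \frac{np}{n+p}}$, for instance
\begin{align*}
  \theta := \tfrac 12 \Big(1 + \max \big(\tfrac 1p, \tfrac{n}{n+p}\big)\Big).
\end{align*}
By Lemma~\ref{lem:M-omega-BMO} we have $\abs{\log \omega}_{\setBMO(2B)} \leq 2 \abs{\log \bbM}_{\setBMO(2B)} \leq 2 \kappa_2$, and then Proposition~\ref{pro:small-scalar}(iv) delivers the balance
\begin{align*}
  \sup_{B' \subset 2B}\Big(\dashint_{B'} \omega^p\,dx\Big)^{\frac 1p} \Big(\dashint_{B'} \omega^{-(\theta p)'}\,dx\Big)^{\frac{1}{(\theta p)'}} &\leq 4,
\end{align*}
as soon as $\kappa_2 \leq \frac{\gamma}{2} \min \set{\frac 1p, 1 - \frac{1}{\theta p}}$, where $\gamma$ is the constant from Proposition~\ref{pro:small-scalar}. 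This provides precisely the hypothesis~\eqref{eq:DreDur} of the weighted Poincaré on sub-balls of $2B$.

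Next, I apply Proposition~\ref{pro:caccioppoli} on the pair $(B, 2B)$ to bound
\begin{align*}
  \dashint_B \abs{\nabla u}^p \omega^p\,dx
  \lesssim \dashint_{2B} \Big|\tfrac{u - \mean{u}_{2B}}{r_B}\Big|^p \omega^p\,dx + \dashint_{2B} \abs{G}^p \omega^p\,dx,
\end{align*}
and then invoke Proposition~\ref{pro:DreDur} to replace the oscillation term by $\big(\dashint_{2B}(\abs{\nabla u}\,\omega)^{\theta p}\,dx\big)^{1/\theta}$. Using $r_B \eqsim r_{2B}$ to absorb the harmless constant, the two estimates combine to give the claimed reverse Hölder inequality.

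The main technical obstacle is the bookkeeping of nested balls: Proposition~\ref{pro:DreDur} applied on a ball $B^\ast$ strictly needs \eqref{eq:DreDur} on sub-balls of $2B^\ast$, so the outer Caccioppoli ball must sit strictly inside the region where the logarithmic smallness is assumed. This is handled either by a routine cutoff-shrinkage variant of Proposition~\ref{pro:caccioppoli} with outer radius $\tfrac 32 r_B$ (so that the Poincaré ball $B_{(3/2) r_B}$ has its double still inside the prescribed region of $\setBMO$-smallness), or by simply enlarging the region of smallness slightly. Once the ball sizes are matched, everything else is the classical Caccioppoli-plus-Poincaré chain, and $\kappa_2(p,n,\Lambda)$ is obtained as the minimum of the constant from Proposition~\ref{pro:small-scalar}(iv) above and the threshold needed for $\omega^p$ to be an $\mathcal{A}_p$-Muckenhoupt weight (used implicitly through the Caccioppoli inequality).
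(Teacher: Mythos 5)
Your proposal is correct and follows essentially the same route as the paper: the paper's proof is precisely the combination of Proposition~\ref{pro:small-scalar}~\ref{itm:Ap2} (to secure hypothesis~\eqref{eq:DreDur}), the Caccioppoli estimate of Proposition~\ref{pro:caccioppoli}, and the weighted Poincar\'e inequality of Proposition~\ref{pro:DreDur}. You are in fact more careful than the paper about the nesting of balls required for~\eqref{eq:DreDur} on sub-balls of the doubled Poincar\'e ball, a point the paper's two-line proof silently glosses over.
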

\begin{proof}
  We can choose~$\kappa_2$ so small such that
  Proposition~\ref{pro:small-scalar}~\ref{itm:Ap2} ensures the
  applicability of the weighted \Poincare{}-Estimate of
  Proposition~\ref{pro:DreDur}. This and the Caccioppoli estimate of
  Proposition~\ref{pro:caccioppoli} prove the claim.
\end{proof}
An application of Gehring's lemma (e.g. \cite[Theorem~6.6]{Giu03})
immediately gives the following consequence.
\begin{corollary}[Small Higher Integrability]
  \label{cor:small-high-int}
  There exists~$\kappa_2 >0$ and~$s>1$ such that for all
  balls~$B$ with~$2B \subset \Omega$ there holds: if
  $\abs{\log \bbM}_{\setBMO(2B)} \leq \kappa_2=\kappa_2(p,n,\Lambda)$,
  then
  \begin{align*}
    \bigg(\dashint_B \big(\abs{\nabla u}^p \omega^p\big)^s\,dx \bigg)^{\frac 1s}
    &\lesssim \, \dashint_{2B} \abs{\nabla u}^p\omega^p \,dx
      +\bigg(\dashint_{2B} \big(|G|^p \omega^p\big)^s
      \,dx \bigg)^{\frac 1s}.
  \end{align*}
\end{corollary}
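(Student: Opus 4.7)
The corollary is the standard Gehring self-improvement of Proposition~\ref{pro:reverse}. My plan is to rewrite the reverse Hölder estimate of that proposition in the form required by Gehring's lemma and then invoke the lemma to produce the exponent $s>1$.

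Concretely, I would set $F := (\abs{\nabla u}\,\omega)^p$ and $H := (\abs{G}\,\omega)^p$ and read Proposition~\ref{pro:reverse} as
\begin{equation*}
\dashint_B F\,dx \lesssim \left(\dashint_{2B} F^\theta\,dx\right)^{1/\theta} + \dashint_{2B} H\,dx
\end{equation*}
for some $\theta = \theta(p,n,\Lambda) \in (0,1)$. Since the $\setBMO$ semi-norm is monotone under restriction, the hypothesis $\abs{\log \bbM}_{\setBMO(2B)} \leq \kappa_2$ transfers automatically to every sub-ball, so the reverse Hölder estimate above holds uniformly over every pair of concentric balls $B_r \subset B_{2r}$ lying inside $2B$. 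This is exactly the family of inequalities needed as input for Gehring's lemma (in the form of~\cite[Theorem~6.6]{Giu03}), which then produces an exponent $s = s(p,n,\Lambda) > 1$ and a constant $c$ with
\begin{equation*}
\left(\dashint_B F^s\,dx\right)^{1/s} \leq c\,\dashint_{2B} F\,dx + c\left(\dashint_{2B} H^s\,dx\right)^{1/s}.
\end{equation*}
Unwinding the definitions of $F$ and $H$ then yields the claim.

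The only substantive checkpoint is the reformulation of Proposition~\ref{pro:reverse}: its right-hand side contains $\abs{\nabla u}^{\theta p}\omega^p$, whereas $F^\theta = \abs{\nabla u}^{\theta p}\omega^{\theta p}$. Reconciling these cleanly uses that $\omega^p$ is an $\mathcal{A}_p$-Muckenhoupt weight by Proposition~\ref{pro:small-scalar}\ref{itm:Ap1} (after possibly shrinking $\kappa_2$), so that enough local integrability of $\omega$-powers is available to absorb the discrepancy into the multiplicative constant. I expect this bookkeeping of $\omega$-powers to be the only delicate point; once it is handled, the conclusion is a direct black-box invocation of Gehring and no further computation is needed.
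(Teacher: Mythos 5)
Your route is exactly the paper's: the proof given there consists of the single remark that Gehring's lemma (in the form of \cite[Theorem~6.6]{Giu03}) applied to Proposition~\ref{pro:reverse} yields the corollary, and your reduction to the standard reverse H\"older form, uniformly over all sub-balls of $2B$, is the intended argument.

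One correction on your ``substantive checkpoint''. The mismatch you observe between $\abs{\nabla u}^{\theta p}\omega^{p}$ and $F^{\theta}=\abs{\nabla u}^{\theta p}\omega^{\theta p}$ is a typo in the displayed statement of Proposition~\ref{pro:reverse}, not a genuine discrepancy: that term arises by inserting the weighted \Poincare{} inequality of Proposition~\ref{pro:DreDur}, whose right-hand side is $\big(\dashint (\abs{\nabla u}\,\omega)^{\theta p}\,dx\big)^{1/(\theta p)}$, into the Caccioppoli estimate of Proposition~\ref{pro:caccioppoli}, so the exponent on $\omega$ is $\theta p$ and the estimate is already in the form $\dashint_B F\,dx\lesssim\big(\dashint_{2B}F^{\theta}\,dx\big)^{1/\theta}+\dashint_{2B}H\,dx$. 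This matters because the repair you sketch would not work as stated: the version with $\omega^{p}$ is not homogeneous under $\omega\mapsto t\omega$ (the left-hand side scales like $t^{p}$, the first right-hand term like $t^{p/\theta}$), so the discrepancy cannot be absorbed into a multiplicative constant; any H\"older or $\mathcal{A}_p$ bookkeeping leaves a ball-dependent factor of order $(\logmean{\omega}_{2B})^{(1-\theta)p/\theta}$ that cannot be removed without running the reverse H\"older estimate you are trying to prove. Reading Proposition~\ref{pro:reverse} in its homogeneous form, Gehring applies verbatim and no reconciliation of $\omega$-powers is needed.
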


\subsection{Interlude on Orlicz Functions}
\label{sec:interl-orlicz-funct}

For the analysis in the following sections it is useful to introduce a
few auxiliary functions and some basic properties on Orlicz functions. The
N-function 
\begin{align*}
  \phi(t) &:= \tfrac 1p t^p
\end{align*}
is the natural one for our problems.

Then
\begin{align}
  A(\xi) &= \abs{\xi}^{p-2} \xi = \frac{\phi'(\abs{\xi})}{\abs{\xi}} \xi.
\end{align}
Let us define
\begin{align*}
  V(\xi) &:=\sqrt{\frac{\phi'(\abs{\xi})}{\abs{\xi}}} \xi = |\xi|^{\frac{p-2}{2}}\xi.
\end{align*}
In general a function $\psi:\setR_{\ge 0}\to\setR$ is called an
N-function if and only if there is a right-continuous, positive on the
positive real line, and non-decreasing function
$\psi':\setR_{\ge 0}\to\setR$ with $\psi'(0)=0$ and
$\lim_{t \to \infty} \psi'(t)=\infty$ such that
$\psi(t) = \int_0^{t}\psi'(\tau)\,d \tau$. An N-function is said to
satisfy the $\Delta_2$-condition if and only if there is a constant
$c>1$ such that $\psi(2t)\le c\,\psi(t)$.

The conjugate of an N-function~$\psi$ is defined as
\begin{align*}
  \psi^*(t) := \sup_{s \geq 0} \left( ts - \psi(s) \right), \quad t \geq 0.
\end{align*}
In our case~$\phi^*(t) = \tfrac{1}{p'} t^{p'}$.

Moreover, we need the notion of shifted N-functions first
introduced in~\cite{DieEtt08}. Here, we use the slight variant
of~\cite[Appendix~B]{DieForTomWan19} with even nicer properties.

We define the shifted N-functions $\phi_a$ for $a \geq 0$ by
\begin{align}
  \phi_a(t) := \int_0^t \frac{\phi'(a \vee s)}{a \vee s} s \, ds,
\end{align}
where $s_1 \vee s_2 := \max \{ s_1, s_2 \}$ for $s_1, s_2 \in
\setR$. Then\footnote{The version from~\cite{DieEtt08} used~$+$
  instead of~$\vee$. This implies that the equality
  in~\eqref{eq:dual-shift} has to be replaced by~$\eqsim$. This would
  still be sufficient for the purpose of this paper.}
\begin{align}
  \label{eq:phi_a-approx}
  \begin{aligned}
    \phi_a(t) &\eqsim (a \vee t)^{p-2} t^2,
    \\
    \phi_a'(t) &\eqsim (a \vee t)^{p-2} t,
  \end{aligned}
\end{align}
with constants only depending on~$p$. The index~$a$ is called
the~\emph{shift}.  Obviously, $\phi_0= \phi$. Moreover, if
$a \eqsim b$, then $\phi_a(t) \eqsim \phi_b(t)$.
For the shifted
N-functions~$\phi_a$ we have
\begin{align}
  \label{eq:dual-shift}
  (\phi_a)^* &= (\phi^*)_{\phi'(a)}.
\end{align}
Thus, we get the useful equation
\begin{align}
  \label{eq:dual-shift2}
  (\phi_{\abs{\xi}})^* &= (\phi^*)_{\abs{A(\xi)}}.
\end{align}
Moreover, the family~$\phi_a$, $a\geq 0$, as
well as its conjugate functions also satisfy the~$\Delta_2$-condition
with a $\Delta_2$-constant uniformly bounded with respect to~$a$. In
particular, we can apply Young's inequality to obtain: for
every~$\delta>0$ there exists~$c_\delta=c_\delta(\delta,p)\geq 1$ such
that for all~$s,t,a\geq 0$
\begin{align}
  \label{eq:young}
  s\,t &\leq c_\delta (\phi_a)^*(s) + \delta\,\phi_{a}(t).
\end{align}
Using~$\phi_a(t) \eqsim \phi_a'(t)\,t$ and $(\phi_a)^*\eqsim (t\phi_a'(t))$
we get the following equivalent versions
\begin{align}
  \label{eq:young2}
  \begin{aligned}
    \phi_a'(s)\,t &\leq c_\delta \phi_a(s) + \delta\,\phi_a(t),
    \\
    \phi_a'(s)\,t &\leq \delta \phi_a(s) + c_\delta\,\phi_{a}(t)
  \end{aligned}
\end{align}
for $s,t,a \geq 0$.

Moreover, the following simple equivalence holds for~$a \geq 0$
\begin{align}
  \label{eq:phialambdaa}
  \phi_a(\lambda a)&\eqsim\begin{cases} \lambda^2 \phi(a),&\quad
    \text{for }
    \lambda<1,
    \\
    \phi(\lambda a) &\quad \text{for }\lambda>1.
 \end{cases}
\end{align}

The important relation between~$A$, $V$ and the~$\phi_a$ is best
summarized in the following lemma.
\begin{lemma}[{\cite[Lemma~41]{DieForTomWan19}}]
  \label{lem:hammer}
  For all~$P,Q \in \Rn$ there holds
  \begin{align*}
    \big( A(P)-A(Q)  \big) \cdot ( P-Q )
    & \eqsim \abs{V(P)-V(Q)}^2
    \\
    & \eqsim \phi_{\abs{Q}} \left( \abs{P-Q} \right)
    \\
    & \eqsim (\phi^*)_{\abs{A(Q)}} \left( \abs{A(P)-A(Q)}
      \right).
    \\
    \intertext{and}
    A(Q) \cdot Q = \abs{V(Q)}^2
    &\eqsim
      \phi_{\abs{Q}}(\abs{Q}) \eqsim \phi(\abs{Q})
    \\
    \intertext{and}
    \abs{A(P)-A(Q)} &\eqsim (\phi_{\abs{Q}})'(\abs{P-Q}).
  \end{align*}
  The implicit constants depend only on $p$.
\end{lemma}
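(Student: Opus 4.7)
The plan is to reduce everything to the case $|Q|=1$ by homogeneity and then split into two regimes according to the ratio $|P-Q|/|Q|$. First, I would dispose of the second block of equivalences: $A(Q)\cdot Q = |Q|^p = |V(Q)|^2$ follows directly from the definitions of $A$ and $V$, and $|Q|^p \eqsim \phi_{|Q|}(|Q|) \eqsim \phi(|Q|)$ is immediate from~\eqref{eq:phi_a-approx} with $a=t=|Q|$ combined with $\phi(t)=\tfrac1p t^p$. For the remaining equivalences involving both $P$ and $Q$, observe that $A$ is $(p-1)$-homogeneous, $V$ is $(p/2)$-homogeneous, and a direct change of variables in the definition of $\phi_a$ shows $\phi_{\lambda a}(\lambda t)=\lambda^p\phi_a(t)$, with a corresponding relation for $\phi_a'$. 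Consequently the rescaling $(P,Q)\mapsto(\lambda P,\lambda Q)$ multiplies every quantity in the first chain by $\lambda^p$ (and every quantity in the gradient equivalence by $\lambda^{p-1}$), which together with the trivial case $Q=0$ (where $\phi_{|Q|}=\phi$) lets me assume $|Q|=1$.

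With $|Q|=1$, I would split into the small-perturbation regime $|P-Q|\leq \tfrac12$ (where $|P|\eqsim|Q|\eqsim 1$) and the large-perturbation regime $|P-Q|>\tfrac12$ (where $|P-Q|\eqsim|P|\vee|Q|$). In the small regime, the fundamental theorem of calculus applied to $A$ and $V$ along the segment $[Q,P]$, combined with the explicit Jacobians $DA(\xi)=|\xi|^{p-2}(I+(p-2)\hat\xi\otimes\hat\xi)$ and the analogous rank-one formula for $DV(\xi)$, yields $(A(P)-A(Q))\cdot(P-Q)\eqsim|V(P)-V(Q)|^2\eqsim|P-Q|^2$; by~\eqref{eq:phi_a-approx} this equals $\phi_{|Q|}(|P-Q|)$ up to constants depending only on $p$. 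In the large regime, each of the three quantities reduces to $\eqsim|P-Q|^p\eqsim\phi_{|Q|}(|P-Q|)$ by direct estimation, using $|A(P)|+|A(Q)|\lesssim|P-Q|^{p-1}$ from above and a standard coercivity argument from below. The same two-regime analysis, together with $(\phi_{|Q|})'(t)\eqsim(|Q|\vee t)^{p-2}t$ from~\eqref{eq:phi_a-approx}, gives $|A(P)-A(Q)|\eqsim(\phi_{|Q|})'(|P-Q|)$. For the fourth quantity, I would invoke~\eqref{eq:dual-shift2} to rewrite $(\phi^*)_{|A(Q)|}=(\phi_{|Q|})^*$ and then combine the general $\Delta_2$-identity $\psi^*(\psi'(t))\eqsim\psi(t)$, applied to $\psi=\phi_{|Q|}$, with the already-established gradient equivalence.

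The main technical obstacle is the lower bound in the first chain with constants depending only on $p$ and, more structurally, the fact that $\phi_{|Q|}(|P-Q|)$ behaves like $|Q|^{p-2}|P-Q|^2$ when $|P-Q|\ll|Q|$ and like $|P-Q|^p$ when $|P-Q|\gg|Q|$, so the two regimes genuinely require different arguments and must be glued across the transition $|P-Q|\sim|Q|$. The reason the lemma is naturally stated in terms of $V$ and of the shifted N-function $\phi_{|Q|}$, rather than with $|P-Q|^p$ on the right, is that $V$ linearizes $A$ in the sense $|V(P)-V(Q)|^2\eqsim DA(Q)(P-Q)\cdot(P-Q)$ to leading order near $Q$, and the shift $|Q|$ in $\phi_{|Q|}$ is precisely what absorbs the degeneracy of the $p$-Laplacian at the origin (for $p\neq 2$); any attempt to replace the shifted N-function by $|P-Q|^p$ collapses in the small-perturbation regime.
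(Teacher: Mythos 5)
The paper does not prove this lemma at all: it is imported verbatim as \cite[Lemma~41]{DieForTomWan19}, so there is no internal argument to compare against. Your sketch is, in substance, the standard proof from that reference and from \cite{DieEtt08}, and it is correct: the homogeneity reduction to $\abs{Q}=1$ is legitimate (all five quantities are $p$-homogeneous, resp.\ $(p-1)$-homogeneous, under $(P,Q)\mapsto(\lambda P,\lambda Q)$), the second block is indeed immediate from the definitions and \eqref{eq:phi_a-approx}, and the reduction of the fourth equivalence to the fifth via \eqref{eq:dual-shift2} and $\psi^*(\psi'(t))\eqsim\psi(t)$ is exactly how it is done in the literature. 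The one place where your outline compresses the real work is the lower bound in the regime you call ``large'' (and, symmetrically for $1<p<2$, the upper bound there): the clean way to handle both regimes at once is the integral representation $A(P)-A(Q)=\int_0^1 DA(Q+\theta(P-Q))(P-Q)\,d\theta$ together with the key estimate $\int_0^1\abs{Q+\theta(P-Q)}^{p-2}\,d\theta\eqsim(\abs{P}+\abs{Q})^{p-2}$, valid for all $p>1$; your two-regime split is an equivalent packaging of this same fact, and once it is in hand no gluing across the transition $\abs{P-Q}\sim\abs{Q}$ is needed, since the two cases are exhaustive and each carries constants depending only on $p$. Also note that $\phi_{\abs{Q}}(\abs{P-Q})\eqsim(\abs{Q}\vee\abs{P-Q})^{p-2}\abs{P-Q}^2\eqsim(\abs{P}+\abs{Q})^{p-2}\abs{P-Q}^2$, which lets you treat both regimes in one line once the integral estimate is established. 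So: correct approach, consistent with the cited source; only the coercivity estimate deserves to be written out rather than waved at.
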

Also of strong use is the possibility to change the shift:
\begin{lemma}[{Change of shift, \cite[Corollary~44]{DieForTomWan19}}]
  \label{lem:change_of_shift}
  For~$\delta>0$ there exists~$c_\delta=c_\delta(p)$ such that for
  all~$P,Q \in \Rn$ there holds
  \begin{align*}
    \phi_{|P|}(t) & \leq c_{\delta} \phi_{|Q|}(t) + \delta\,
                    \abs{V(P)-V(Q)}^2,
    \\
    (\phi_{|P|})^*(t) & \leq c_{\delta} (\phi_{|Q|})^*(t) + \delta\,
                        \abs{V(P)-V(Q)}^2.
    \\
  \end{align*}
  The implicit constants depend only on $p$.
\end{lemma}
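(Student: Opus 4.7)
The plan is to deduce both inequalities from the pointwise equivalence $\phi_a(t) \eqsim (a \vee t)^{p-2} t^2$ in \eqref{eq:phi_a-approx}, together with the ``correction term'' identity $\abs{V(P)-V(Q)}^2 \eqsim \phi_{\abs{Q}}(\abs{P-Q})$ from Lemma~\ref{lem:hammer}. The overall strategy is a case split on the relative sizes of $\abs{P}$ and $\abs{Q}$.

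In the easy regime $\abs{P} \leq 2\abs{Q}$, one has $\abs{P} \vee t \leq 2(\abs{Q} \vee t)$, which after raising to the (possibly negative) power $p-2$ costs only a factor $2^{\abs{p-2}}$, so $\phi_{\abs{P}}(t) \leq c(p)\, \phi_{\abs{Q}}(t)$ without needing the $\delta$-correction at all.

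The content lies in the regime $\abs{P} > 2\abs{Q}$, where $\abs{P-Q} \geq \abs{P}/2 \geq \abs{Q}$, and Lemma~\ref{lem:hammer} produces the lower bound
\begin{align*}
  \abs{V(P)-V(Q)}^2 \eqsim \phi_{\abs{Q}}(\abs{P-Q}) \eqsim \abs{P}^p \eqsim \phi(\abs{P}).
\end{align*}
So the target reduces to the pointwise inequality $\phi_{\abs{P}}(t) \leq c_\delta \phi_{\abs{Q}}(t) + \delta\, \phi(\abs{P})$. Splitting on whether $t \geq \abs{P}$ (both sides $\eqsim t^p$, trivial) or $t < \abs{P}$ (so $\phi_{\abs{P}}(t) \eqsim \abs{P}^{p-2} t^2$), the subcritical case $1 < p \leq 2$ is handled immediately by monotonicity of $x \mapsto x^{p-2}$; the supercritical case $p > 2$ is the genuinely hard step and calls for Young's inequality with conjugate exponents $p/(p-2)$ and $p/2$ to split $\abs{P}^{p-2} t^2 \leq \delta \abs{P}^p + c_\delta t^p$, after which the leftover $t^p$ is absorbed into $\phi_{\abs{Q}}(t)$ via $t^p \leq (\abs{Q} \vee t)^{p-2} t^2$ (valid for $p \geq 2$).

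The dual inequality is obtained by the identical argument after rewriting $(\phi_{\abs{P}})^* = (\phi^*)_{\abs{A(P)}}$ via \eqref{eq:dual-shift2}. Since $\phi^*$ is again a power-type N-function of exponent $p'$ satisfying $(\phi^*)_a(t) \eqsim (a \vee t)^{p'-2} t^2$, and since Lemma~\ref{lem:hammer} provides the companion equivalence $(\phi^*)_{\abs{A(Q)}}(\abs{A(P)-A(Q)}) \eqsim \abs{V(P)-V(Q)}^2$, the entire case analysis runs verbatim with $(\abs{P},\abs{Q},p)$ replaced by $(\abs{A(P)},\abs{A(Q)},p')$; the hard sub-case swaps from $p>2$ to $p<2$, so the two halves together cover the whole range $1<p<\infty$. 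The main obstacle is calibrating Young's inequality in the hard sub-case so that $\delta$ multiplies exactly $\phi(\abs{P})$ (resp.\ $\phi^*(\abs{A(P)})$) and $c_\delta$ blows up only like $\delta^{-(p-2)/2}$ (resp.\ $\delta^{-(p'-2)/2}$); every other step is routine bookkeeping with the equivalences from \eqref{eq:phi_a-approx} and Lemma~\ref{lem:hammer}.
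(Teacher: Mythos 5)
The paper offers no proof of this lemma -- it is imported verbatim from \cite[Corollary~44]{DieForTomWan19} -- so only your argument can be assessed, and it has a genuine gap in the ``easy regime.'' Your treatment of the regime $\abs{P}>2\abs{Q}$ is correct, but the claim that $\abs{P}\le 2\abs{Q}$ alone gives $\phi_{\abs{P}}(t)\le c(p)\,\phi_{\abs{Q}}(t)$ with no correction term is false for $1<p<2$. The step ``raising $\abs{P}\vee t\le 2(\abs{Q}\vee t)$ to the possibly negative power $p-2$ costs only a factor $2^{\abs{p-2}}$'' reverses the inequality when $p-2<0$: from $x\le 2y$ one gets an \emph{upper} bound on $x^{p-2}$ only if $p\ge 2$. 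Concretely, take $P=0$, $\abs{Q}=1$ and $1<p<2$; then for $t\le 1$
\begin{align*}
  \phi_{\abs{P}}(t)=\phi_0(t)\eqsim t^{p},
  \qquad
  \phi_{\abs{Q}}(t)\eqsim t^{2},
\end{align*}
and $t^{p}/t^{2}=t^{p-2}\to\infty$ as $t\to 0$, so no constant $c(p)$ works. A one-sided comparison of the shifts only permits a free change of shift in the direction compatible with the sign of $p-2$; a free change in both directions requires the two-sided comparability $\abs{P}\eqsim\abs{Q}$. The same defect reappears in your dual half for $p>2$, where the relevant exponent is $p'-2<0$ and $\abs{A(P)}\le 2\abs{A(Q)}$ again does not give $(\phi^*)_{\abs{A(P)}}(t)\lesssim(\phi^*)_{\abs{A(Q)}}(t)$.

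The repair is to split three ways. When $\tfrac12\abs{Q}\le\abs{P}\le 2\abs{Q}$ the change of shift is indeed free (now the exponent's sign is irrelevant). When $\abs{P}<\tfrac12\abs{Q}$ one has $\abs{P-Q}\ge\tfrac12\abs{Q}$, hence by Lemma~\ref{lem:hammer} $\abs{V(P)-V(Q)}^2\eqsim\phi_{\abs{Q}}(\abs{P-Q})\gtrsim\phi(\abs{Q})$, and the very same Young-inequality device you use in the hard regime applies here with the conjugate pair $(2/p,\,2/(2-p))$: for $t\le\abs{Q}$ and $p<2$,
\begin{align*}
  \phi_{\abs{P}}(t)\lesssim t^{p}
  \le c_\delta\,\abs{Q}^{p-2}t^{2}+\delta\,\abs{Q}^{p}
  \lesssim c_\delta\,\phi_{\abs{Q}}(t)+\delta\,\abs{V(P)-V(Q)}^{2},
\end{align*}
while $t\ge\abs{Q}$ gives $\phi_{\abs{P}}(t)\eqsim t^p\eqsim\phi_{\abs{Q}}(t)$ directly. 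With this third case added (and its mirror image for the dual inequality when $p>2$), your argument closes; the remaining steps, including the hard-regime Young calibration and the passage to the dual via~\eqref{eq:dual-shift2}, are sound.
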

We are in particular interested in the following special case for~$Q=0$.
\begin{lemma}[Removal of shift]
  \label{lem:removal-shift}
  For all $a \in \Rn$, all~$t \geq 0$ and all~$\delta \in (0,1]$ there holds
  \begin{align}
    \label{eq:removal-shift1}
    \phi_{\abs{a}}'(t) &\leq \phi'\bigg(\frac t \delta \bigg) \vee
                         \big( \delta \phi'(\abs{a})\big),
                         \\
    \label{eq:removal-shift2}
    \phi_{\abs{a}}(t) &\leq \delta\, \phi(\abs{a}) + c\, \delta\,
                        \phi\bigg(\frac{t}{\delta}\bigg),
    \\
    \label{eq:removal-shift3}
    (\phi_{\abs{a}})^*(t) &\leq \delta\, \phi(\abs{a}) + c\, \delta\,
                        \phi^*\bigg(\frac{t}{\delta}\bigg),
  \end{align}
  where~$c$ only depends on the~$\Delta_2$ constants of~$\phi$ and~$\phi^*$.
\end{lemma}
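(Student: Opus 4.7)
The three bounds are proved in order, each reducing to the previous together with Young's inequality.

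\textbf{Step 1 (Proof of~\eqref{eq:removal-shift1}).} For $\phi(s)=s^p/p$, the definition of the shift gives the exact identity $\phi_{\abs{a}}'(t) = (\abs{a}\vee t)^{p-2}\,t$. I split into the cases $t\geq\delta\abs{a}$ and $t<\delta\abs{a}$. In the first case, $\abs{a}\vee t\leq t/\delta$, and monotonicity of $x\mapsto x^{p-1}$ on $(0,\infty)$ (treating $p\geq 2$ and $1<p<2$ separately, but both using $\delta\leq 1$) yields $\phi_{\abs{a}}'(t)\leq(t/\delta)^{p-1}=\phi'(t/\delta)$. In the second case, $\abs{a}\vee t=\abs{a}$ and $t\leq\delta\abs{a}$ together give $\phi_{\abs{a}}'(t) = \abs{a}^{p-2}\,t \leq \delta\abs{a}^{p-1} = \delta\phi'(\abs{a})$. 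Taking the maximum of the two bounds yields~\eqref{eq:removal-shift1}.

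\textbf{Step 2 (Proof of~\eqref{eq:removal-shift2}).} I integrate~\eqref{eq:removal-shift1} on $[0,t]$ and use $x\vee y\leq x+y$ to obtain
\begin{align*}
\phi_{\abs{a}}(t) \leq \int_0^t \phi'(s/\delta)\,ds + \delta\phi'(\abs{a})\,t = \delta\,\phi(t/\delta) + \delta\abs{a}^{p-1}\,t,
\end{align*}
where the first equality of the right-hand side is a direct change of variables. Young's inequality with an appropriately tuned parameter $\eta=\eta(p)$ then gives $\abs{a}^{p-1}t\leq \phi(\abs{a})+C_p\,\phi(t)$, and since $\delta\leq 1$ implies $\phi(t)\leq\phi(t/\delta)$, multiplying by $\delta$ completes the step.

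\textbf{Step 3 (Proof of~\eqref{eq:removal-shift3}).} I would apply~\eqref{eq:dual-shift} to write $(\phi_{\abs{a}})^* = (\phi^*)_{\phi'(\abs{a})}$, then repeat the arguments of Steps~1--2 with $\phi$ replaced by $\phi^*$ and shift $b:=\phi'(\abs{a})$, obtaining
\begin{align*}
(\phi_{\abs{a}})^*(t) \leq \delta\,\phi^*(t/\delta) + \delta\,(\phi^*)'(b)\,t.
\end{align*}
Since $(\phi^*)'=(\phi')^{-1}$, we have $(\phi^*)'(b)=\abs{a}$, and the standard Young's inequality $\abs{a}\,t\leq\phi(\abs{a})+\phi^*(t)\leq\phi(\abs{a})+\phi^*(t/\delta)$ (last step because $\phi^*$ is non-decreasing and $\delta\leq 1$) completes the proof. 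The only mildly delicate point in the whole argument is the monotonicity case-split in Step~1, where the sign of $p-2$ reverses between $p\geq 2$ and $1<p<2$; everything else is a routine application of Young's inequality.
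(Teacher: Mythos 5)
Your proof is correct and follows essentially the same route as the paper: the same case split at $t=\delta\abs{a}$ for \eqref{eq:removal-shift1} (the uniform argument via monotonicity of $x\mapsto x^{p-1}$, using $t\le\abs{a}\vee t$, is exactly right and needs no separate treatment of $p\ge 2$ versus $p<2$), Young's inequality for \eqref{eq:removal-shift2}, and the duality identity \eqref{eq:dual-shift} for \eqref{eq:removal-shift3}. The one small deviation is in Step~2, where you integrate \eqref{eq:removal-shift1} directly instead of invoking the external bound $\phi_{\abs{a}}'(t)\le c\,\phi'(\abs{a})$ from \cite{DieForTomWan19} as the paper does; your version is self-contained and avoids that bound, which as literally quoted cannot hold for large $t$.
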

\begin{proof}
  We start with~\eqref{eq:removal-shift1}.  If $t \leq \delta \abs{a}$, then
  \begin{align*}
    \phi_{\abs{a}}'(t) &= \frac{\phi'(\abs{a} \vee t)}{\abs{a} \vee t}
                         t = \frac{\phi'(\abs{a})}{\abs{a}} t \leq \delta\,\phi'(\abs{a}).
  \end{align*}
  If $t \geq \delta \abs{a}$, then with $0 < \delta \leq 1$
  \begin{align*}
    \phi_{\abs{a}}'(t) & = \frac{\phi'(\abs{a} \vee t)}{\abs{a} \vee
                         t} t \leq \phi'(\abs{a} \vee t) \leq
                         \phi'\bigg(\frac t \delta \bigg). 
  \end{align*}
  This proves~\eqref{eq:removal-shift1}.
  We continue with~\eqref{eq:removal-shift2}.
  From~Lemma~43 of~\cite{DieForTomWan19} (with $b=0$) we have
  \begin{align*}
    \phi_{\abs{a}}'(t) &\leq c\, \phi'(\abs{a}).
  \end{align*}
  Thus, for~$\delta>0$ we obtain with Young's inequality
  \begin{align*}
    \phi_{\abs{a}}(t)
    &\leq \phi_{\abs{a}}'(t)\,t
    \\
    &\leq
      c\, \phi'(\abs{a})\,t
    \\
    &\leq \delta \phi^*\big( \phi'(\abs{a})\big) + c\, \delta\, \phi(t/\delta)
    \\
    &\leq \delta\, c\, \phi(\abs{a}) + c\, \delta\,\phi(t/\delta).
  \end{align*}
  This proves the claim (if we replace~$\delta$ by smaller one). Now,
  \eqref{eq:removal-shift3} follows from \eqref{eq:removal-shift2}
  using $(\phi_{\abs{a}})^* = (\phi^*)_{\phi'(\abs{a})}$ and the
  equivalence $\phi^*(\phi'(\abs{a})) \eqsim \phi(\abs{a})$.
\end{proof}
%\begin{remark}[Removal of shift]
%   We will in particular use the special case~$Q=0$ in
%   Lemma~\ref{lem:change_of_shift}. In this case we can rewrite the
%   estimate as 
%   \begin{align*}
%     \phi_{\abs{P}}(t) &\leq c_\delta \phi(t) + \delta\, \phi(\abs{P}).
%   \end{align*}
%   A careful tracking of the dependency of~$c_\delta$ on on~$\delta$
%   reveals that
%   \begin{align*}
%     \phi_{\abs{P}}(t) &\leq c\, \delta^{1-p} \phi(t) + \delta\, \phi(\abs{P}).
%   \end{align*}
%   for all $t\geq 0$, all $P \in \Rn$ and all~$\delta>0$.
% \end{remark}

\subsection{Comparison Estimate}
\label{sec:comparison-estimate}

To obtain higher integrability beyond
Corollary~\ref{cor:small-high-int} we need to derive comparison
estimates. This is where we need the logarithmic mean~$\logmean{\bbM}_B$ of
our matrix-values weight~$\bbM$, which is a positive definite matrix. 

Recall, that
$\mathcal{A}(x,\xi) := |\bbM(x)\xi|^{p-2} \bbM^2 \xi = \bbM A(\bbM
\xi)$ with $A(\xi) := \abs{\xi}^{p-2} \xi$. Now, for a
ball~$B \subset \Omega$ we abbreviate
\begin{align}
  \begin{aligned}
    \bbM_B &:= \logmean{\bbM}_B,
    \\
    \omega_B &:= \logmean{\omega}_B
  \end{aligned}
\end{align}
and define
\begin{align}
  \label{eq:def-AB}
  \mathcal{A}_B(\xi) &:=  | \bbM_B\xi|^{p-2}
                       \bbM_B^2\xi =  \bbM_B
                       A(\bbM_B \xi).
\end{align}
We will use~$\mathcal{A}_B$ below to define a suitable comparison
problem. It naturally arises if we minimize the energy
$\int \tfrac 1p \abs{\bbM_B \nabla h}^p\,dx$.

We abbreviate 
\begin{align*}
  \mathcal{A}(x,\xi) &= \bbM(x) A(\bbM(x) \xi),
  \\
  \mathcal{V}(x,\xi) &= V(\bbM(x) \xi),
  \\
  \mathcal{V}_B(\xi) &= V(\bbM_B \xi).
\end{align*}
Then we have 
\begin{align*}
  \mathcal{A}(x,\xi) \cdot \xi
  &= \abs{\mathcal{V}(x,\xi)}^2,
  \\
  \mathcal{A}_B(\xi) \cdot \xi
  &= \abs{\mathcal{V}_B(\xi)}^2,
  \\
  \intertext{and}
  \abs{\mathcal{A}_B(\xi)}
  &\lesssim
    \omega_B^p \abs{\xi}^{p-1}.
\end{align*}
We will now compare~$u$ locally to its $\mathcal{A}_B$-harmonic
counterpart. Due to some later localization argument will not compare
directly with~$u$ but with to a truncated version of it. This
technique goes back to Kinnunen and Zhou~\cite{KinZho99}.
Originally, we wanted to compare directly with~$u$, since this seemed
us more natural to us. However, we encountered problems with the
localization of the maximal operators later and decided to proceed as
Kinnunen and Zhou in~\cite{KinZho99}.

We fix a ball $B_0:= B_R(x_0)$ with $2B_0 \subset \Omega$ and choose a
cut-off function $\zeta \in C^\infty_0(B_0)$ with
$\indicator_{\frac 12 B_0} \leq \zeta \leq \indicator_{B_0}$ and
$\norm{\nabla \zeta}_\infty \leq c R^{-1}$. Let us define the localized
function
\begin{align}
  \label{eq:def-z}
  z &:= (u - \mean{u}_{2B_0}) \zeta^{p'}.
\end{align}
Moreover, we will use
\begin{align}
  \label{eq:def-g}
  g &:= \zeta^{p'} \nabla u - \nabla z =  -(u-\mean{u}_{2B_0})\,\nabla
      (\zeta^{p'})  = -(u - \mean{u}_{2B_0})
      p' \zeta^{p'-1} \nabla \zeta. 
\end{align}
Now, let $B=B_r$ denote an arbitrary ball with $4B \subset 2B_0$.  We
want to compare our localized function~$z$ on the ball~$B$ with the
weak solution~$h$ of
\begin{align}
  \label{eq:h_sol}
  \begin{alignedat}{2}
    -\divergence \big(\mathcal{A}_B(\nabla h)\big)&=0
    &\qquad&\text{in }B,
    \\
    h&=z &&\text{on } \partial B.
  \end{alignedat}
\end{align}
The natural function space for~$h$ is
$W^{1,p}_{\omega_B}(B)$ and~$h$ is the unique minimizer of
\begin{align}
  \label{eq:min-h}
  v \mapsto \int_B \phi(\bbM_B \abs{\nabla v})\,dx
\end{align}
subject to the boundary condition~$v=z$ on~$\partial B$.  We will
explain the well posedness of the boundary conditions below in
Lemma~\ref{lem:u-omegaB}.

Recall  that
\begin{align*}
  \Lambda^{-1} \omega(x) \identity \leq \bbM(x) \leq \omega(x) \identity.
\end{align*}
Considering the eigenvalues it follows that
\begin{align*}
  \log(\Lambda^{-1} \omega(x)) \identity \leq \log \bbM(x)  \leq
  (\log \omega)\, \identity.
\end{align*}
This also follows from the operator monotonicity of the matrix
logarithm, see the survey article of
Chansangiam~\cite[Example~13]{Cha15}. Taking the mean value we obtain
\begin{align*}
  \Lambda^{-1} \exp\big(\mean{\log \omega}_B\big) \identity \leq
  \exp\big(\mean{\log \bbM}_B\big)  \leq 
  \exp\big(\mean{\log \omega}_B\big) \identity.
\end{align*}
Comparing again the eigenvalues we obtain by taking the
exponential\footnote{Note that the matrix exponential is not operator
  monotone on~$\Rnnsym$. However, we compare here only with a
  multiple of the identity matrix.}
\begin{align*}
  \big(\log(\Lambda^{-1}) + \mean{\log \omega}_B\big) \identity \leq
  \mean{\log \bbM}_B  \leq 
  \mean{\log \omega}_B \identity.
\end{align*}
In other words,
\begin{align}
  \label{eq:mon-MB}
  \Lambda^{-1} \omega_B \identity \leq \bbM_B \leq
  \omega_B \identity
\end{align}
and therefore
\begin{align}
  \label{eq:mon-MBxi}
  \Lambda^{-1} \omega_B\, \abs{\xi} &\leq \abs{\bbM_B \xi} \leq
  \omega_B\, \abs{\xi} \qquad \text{for all~$\xi\in \Rn$},
  \\
  \intertext{and}
  \label{eq:mon-MBest}
  \Lambda^{-1} \omega_B\, &\leq \abs{\bbM_B} \leq
  \omega_B.
\end{align}
For the well posedness of the boundary condition of the
equation~\eqref{eq:h_sol} it is necessary that~$u$ has enough
regularity. The following lemma ensures  that~$u$ has indeed the required
regularity natural to~\eqref{eq:h_sol}.
\begin{lemma}
  \label{lem:u-omegaB}
  There exists~$\kappa_3>0$ and~$s>1$ such that for all balls~$B$
  with~$2B \subset \Omega$ there holds: if
  $\abs{\log \bbM}_{\setBMO(4B)} \leq \kappa_3=
  \kappa_3(p,n,\Lambda)$, then 
  \begin{align*}
    \dashint_B \abs{\nabla u}^p \omega_B^p\,dx
    &\lesssim \, \dashint_{2B} \abs{\nabla u}^p\omega^p \,dx
      +\bigg(\dashint_{2B} \big(|G|^p \omega^p\big)^s
      \,dx \bigg)^{\frac 1s}.
  \end{align*}

\end{lemma}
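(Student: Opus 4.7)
The aim is to replace the constant weight $\omega_B^p$ appearing on the left-hand side by the variable weight $\omega^p$ on the right-hand side, at the price of a small higher-integrability exponent $s>1$ on the data term. The plan is to use Hölder's inequality (which introduces the extra integrability), then bound the resulting weight term using the small $\setBMO$ control on $\log \omega$, and finally absorb the extra integrability of $|\nabla u|^p \omega^p$ via the Gehring-type Corollary~\ref{cor:small-high-int}.

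Concretely, first I would choose $\kappa_3 \leq \kappa_2$ so that Corollary~\ref{cor:small-high-int} applies on~$B$, producing some exponent~$s>1$ depending only on $p,n,\Lambda$. Then, writing $|\nabla u|^p\omega_B^p = (|\nabla u|^p\omega^p)\cdot (\omega_B/\omega)^p$ and applying Hölder with exponents $s$ and $s'$, one gets
\begin{align*}
  \dashint_B \abs{\nabla u}^p \omega_B^p\,dx
  &\leq \bigg(\dashint_B \big(\abs{\nabla u}^p\omega^p\big)^s\,dx\bigg)^{\frac 1s}
    \cdot \omega_B^p \bigg(\dashint_B \omega^{-ps'}\,dx\bigg)^{\frac 1{s'}}.
\end{align*}

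The second factor is handled by shrinking $\kappa_3$ further so that $2\kappa_3 \leq \gamma/(ps')$. Indeed, by Lemma~\ref{lem:M-omega-BMO} the hypothesis $\abs{\log \bbM}_{\setBMO(4B)}\leq \kappa_3$ yields $\abs{\log \omega}_{\setBMO(B)}\leq 2\kappa_3 \leq \gamma/(ps')$, so Proposition~\ref{pro:small-scalar}\ref{itm:small-scalar1_inv} applied with the exponent $ps'$ gives
\begin{align*}
  \bigg(\dashint_B \omega^{-ps'}\,dx\bigg)^{\frac{1}{ps'}}
  &\leq \frac{2}{\logmean{\omega}_B} = \frac{2}{\omega_B},
\end{align*}
so the second factor is bounded by $2^p$.

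For the first factor, Corollary~\ref{cor:small-high-int} (whose hypothesis $\abs{\log\bbM}_{\setBMO(2B)}\leq \kappa_2$ follows from $\abs{\log\bbM}_{\setBMO(4B)}\leq \kappa_3\leq \kappa_2$ by monotonicity of the local $\setBMO$-seminorm under inclusion of balls) yields
\begin{align*}
  \bigg(\dashint_B \big(\abs{\nabla u}^p\omega^p\big)^s\,dx\bigg)^{\frac 1s}
  &\lesssim \dashint_{2B} \abs{\nabla u}^p\omega^p\,dx
    + \bigg(\dashint_{2B}\big(\abs{G}^p\omega^p\big)^s\,dx\bigg)^{\frac 1s}.
\end{align*}
Combining the two estimates gives the claim. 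I do not anticipate any real obstacle: the only subtle point is verifying that the local $\setBMO$-seminorm decreases under restriction to smaller concentric balls, so that one smallness assumption on $4B$ simultaneously triggers both Proposition~\ref{pro:small-scalar}\ref{itm:small-scalar1_inv} on~$B$ and Corollary~\ref{cor:small-high-int} on~$B$; the constants $\kappa_3$ and $s$ depend on $p,n,\Lambda$ only.
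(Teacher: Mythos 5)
Your proposal is correct and follows essentially the same route as the paper: Hölder's inequality with exponents $s,s'$ to split off the factor $(\omega_B/\omega)^p$, Proposition~\ref{pro:small-scalar}\ref{itm:small-scalar1_inv} (via Lemma~\ref{lem:M-omega-BMO}) to bound that factor by $2^p$, and Corollary~\ref{cor:small-high-int} for the remaining term. You merely spell out the choice of exponent $ps'$ and the propagation of the smallness constant, which the paper leaves implicit.
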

\begin{proof}
  By H\"older's inequality and Proposition~\ref{pro:small-scalar}
  \begin{align*}
    \dashint_B \abs{\nabla u}^p \omega_B^p\,dx
    &\leq  \bigg(\dashint_B \big(\abs{\nabla u}^p
      \omega^p\big)^s\,dx \bigg)^{\frac 1s}
      \bigg( \dashint_B \big(\omega_B \omega^{-1}
      \big)^{ps'}\,dx \bigg)^{\frac 1{s'}}
    \\
    &\leq  \bigg(\dashint_B \big(\abs{\nabla u}^p
      \omega^p\big)^s\,dx \bigg)^{\frac 1s}
      2^p.
  \end{align*}
  Now, the claim follows with Corollary~\ref{cor:small-high-int}.
\end{proof}
It follows from this lemma that $u \in W^{1,p}_{\omega_B}(B)$ (assuming
the smallness condition on~$\log \bbM$). Thus also $z=(u-\mean{u}_B) \zeta^{p'} \in
W^{1,p}_{\omega_B}(B)$. In particular, it follows that
equation~\eqref{eq:h_sol} is well posed with a unique
solution~$h \in W^{1,p}_{\omega_B}(B)$ and $h=z$
on~$\partial B$ in the usual trace sense.

The following proposition summarizes the higher regularity properties
of~$h$.
\begin{proposition}
  \label{pro:reg-h}
  Let~$h$ be the solution of~\eqref{eq:h_sol}. Then
  \begin{align*}
    \sup_{\frac 12 B} \abs{\nabla h}^p \omega_B^p &\leq c\,
    \dashint_B \abs{\nabla h}^p \omega_B^p\,dx.
  \end{align*}
  Moreover, there exists~$\alpha \in (0,1)$ such that for all
  $\lambda \in (0,1)$
  \begin{align*}
     \dashint_{\lambda B} \abs{\mathcal{V}_B(\nabla h) -
     \mean{\mathcal{V}_B(\nabla h)}_{\lambda B}}^2\,dx
    &\leq c\, \lambda^{2\alpha}
    \dashint_{B} \abs{\mathcal{V}_B(\nabla h) -
    \mean{\mathcal{V}_B(\nabla h)}_{B}}^2\,dx.
  \end{align*}
  The constants~$c,\alpha$ only depend on~$p$, $n$ and $\Lambda$.
\end{proposition}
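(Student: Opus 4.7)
The plan is to reduce Proposition~\ref{pro:reg-h} to the classical interior regularity theory for the ordinary $p$-Laplace equation by a single linear change of variables that absorbs the constant matrix $\bbM_B$. Concretely, set $\tilde h(y) := h(\bbM_B y)$, so that by the chain rule $\nabla \tilde h(y) = \bbM_B \nabla h(\bbM_B y)$. Testing the equation $-\divergence \mathcal{A}_B(\nabla h) = 0$ against a test function of the form $\psi(x) := \tilde\psi(\bbM_B^{-1} x)$ and substituting $x = \bbM_B y$ yields, using $\mathcal{A}_B(\xi) = \bbM_B A(\bbM_B \xi)$ and the symmetry of $\bbM_B$, that $\tilde h$ is a weak solution of the standard unweighted $p$-Laplace equation $-\divergence A(\nabla \tilde h) = 0$ on the ellipsoidal domain $\bbM_B^{-1}(B)$.

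For this equation I invoke the classical interior regularity theory (Uhlenbeck, Uraltseva, DiBenedetto, Tolksdorf): for any ball $\tilde B \subset \bbM_B^{-1}(B)$ one has the Lipschitz bound $\sup_{\frac 12 \tilde B} \abs{\nabla \tilde h}^p \leq c \dashint_{\tilde B} \abs{\nabla \tilde h}^p\,dy$, and $V(\nabla \tilde h)$ is H\"older continuous with some exponent $\alpha = \alpha(p,n) \in (0,1)$, giving
\[
  \dashint_{\lambda \tilde B} \bigabs{V(\nabla \tilde h) - \mean{V(\nabla \tilde h)}_{\lambda \tilde B}}^2\,dy \leq c\, \lambda^{2\alpha} \dashint_{\tilde B} \bigabs{V(\nabla \tilde h) - \mean{V(\nabla \tilde h)}_{\tilde B}}^2\,dy
\]
for all $\lambda \in (0,1)$. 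To transport these estimates back to balls in the original $x$-variable I use that $\bbM_B$ has condition number bounded by $\Lambda$ (see~\eqref{eq:mon-MBest}), so $\bbM_B^{-1}(B_r(x_0))$ is sandwiched between two concentric balls at $y_0 := \bbM_B^{-1} x_0$ of radii $r/\omega_B$ and $r\Lambda/\omega_B$. In particular every point of $\bbM_B^{-1}(\tfrac 12 B)$ is the center of a ball of radius $\tfrac{r}{2\omega_B}$ contained in $\bbM_B^{-1}(B)$, so a covering argument recovers the sup estimate on $\bbM_B^{-1}(\tfrac 12 B)$ from the mean over $\bbM_B^{-1}(B)$, and the analogous sandwiching yields the decay on $\bbM_B^{-1}(\lambda B)$. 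Translating back via $\bbM_B \nabla h = \nabla \tilde h \circ \bbM_B^{-1}$, relation~\eqref{eq:mon-MBxi} gives $\abs{\nabla \tilde h}^p \eqsim \omega_B^p \abs{\nabla h}^p$ with constants depending only on $\Lambda$, while $V(\nabla \tilde h) = V(\bbM_B \nabla h) = \mathcal{V}_B(\nabla h)$ by definition. The Jacobian $\det \bbM_B$ arising from the change of variables cancels since we only compare mean values.

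The only real obstacle is the geometric bookkeeping required to go between balls in $x$-space and the ellipsoids appearing in $y$-space. Since the eccentricity of these ellipsoids is uniformly bounded by $\Lambda$, this bookkeeping produces constants of exactly the form $c = c(p,n,\Lambda)$ announced in the statement, while the H\"older exponent $\alpha = \alpha(p,n) \in (0,1)$ is inherited unchanged from the unweighted $p$-harmonic theory.
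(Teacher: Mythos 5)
Your proposal is correct and is essentially the paper's own argument: both reduce to the unweighted $p$-Laplace equation by the linear substitution $x=\bbM_B y$ (the paper merely normalizes $\omega_B=1$ and diagonalizes $\bbM_B$ first), invoke classical interior regularity for $p$-harmonic functions (the paper cites Lemma~5.8 and Theorem~6.4 of~\cite{DieSV09} for the Lipschitz bound and the excess decay of $V(\nabla \tilde h)$), and transfer the estimates back using that the resulting ellipsoids have eccentricity bounded by~$\Lambda$.
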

\begin{proof}
  If~$\omega=1$, then both estimates just follow from Lemma~5.8 and
  Theorem~6.4 of~\cite{DieSV09}.  In the general case we have
  by~\eqref{eq:mon-MB}
  \begin{align*}
    \Lambda^{-1} \omega_B \identity \leq \bbM_B \leq \omega_B \identity.
  \end{align*}
  Since~\eqref{eq:h_sol} and our estimates scale by scalar factors, we
  can assume without loss of generality~$\omega_B=1$. We can also
  assume that~$B$ is centered at~$0$.
  
  Since $\bbM_B$ is also symmetric, we can find an orthogonal
  matrix~$Q$ and a diagonal matrix~$\bbD_B$ such that $\bbM_B = Q
  \bbD_B Q^*$. If we define $w(x) := h(Qx)$, then it follows that~$w$
  solves~\eqref{eq:h_sol} with~$\bbM_B$ replaced by~$\bbD_B$. The
  boundary values are also rotated but this is not important for our
  estimates.  Hence, we can assume without loss of generality
  that~$\bbM_B$ is already diagonal. We have
  reduced the claim so far to a diagonal matrix~$\bbM_B=\bbD_B$ with
  \begin{align*}
    \Lambda^{-1} \identity \leq \bbD_B \leq \identity.
  \end{align*}
  Since~$\Lambda$ is fixed we can use an anisotropic scaling
  $x \mapsto \bbD_B^{-1}x$.  This turns estimates on balls into
  estimates on ellipses of uniformly bounded eccentricity. Thus, we
  deduce from Lemma~5.8 and Theorem~6.4 of~\cite{DieSV09} that our
  estimates are valid for ellipses instead of balls.  Since all balls
  can be covered by slightly enlarged ellipses and vice versa, it
  follows that the estimates are also true for balls with slightly
  enlarged constants (depending on~$\Lambda$). This proves the claim. 
\end{proof}

The following lemma allows us to control the difference of the
mapping~$\mathcal{A}(\cdot,\cdot)$ and its frozen
version~$\mathcal{A}_B(\cdot)$.
\begin{lemma}
  \label{lem:AB-A}
  For all~$\xi \in \Rn$ and all~$x \in B$ there holds
  \begin{align*}
    \abs{\mathcal{A}_B(\xi) - \mathcal{A}(x,\xi)}
    &=
      \bigabs{\bbM_B A(\bbM_B \xi) - \bbM(x) A(\bbM(x) \xi)}
    \\
    &\lesssim
      \frac{\bigabs{\bbM_B-\bbM(x)}}{\abs{\bbM_B} + \abs{\bbM(x)}} \big(
      \abs{\mathcal{A}_B(\xi)} + \abs{\mathcal{A}(x,\xi)}\big).
  \end{align*}
\end{lemma}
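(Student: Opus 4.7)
The plan is to interpolate linearly between $\bbM(x)$ and $\bbM_B$ and apply the fundamental theorem of calculus. Set $\bbM_t := (1-t)\bbM(x) + t\bbM_B$ for $t\in[0,1]$ and define
\[
F(t) := \bbM_t A(\bbM_t \xi) = \abs{\bbM_t \xi}^{p-2}\bbM_t^2 \xi,
\]
so that $F(0) = \mathcal{A}(x,\xi)$, $F(1) = \mathcal{A}_B(\xi)$, and the claim is reduced to a pointwise bound on $\abs{F'(t)}$ via $\mathcal{A}_B(\xi) - \mathcal{A}(x,\xi) = \int_0^1 F'(t)\,dt$.

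The central preliminary observation is that $\bbM_t$ inherits a uniform condition-number bound from its two endpoints: since $\Lambda^{-1}\omega(x)\identity \leq \bbM(x) \leq \omega(x)\identity$ and, by~\eqref{eq:mon-MB}, $\Lambda^{-1}\omega_B\identity \leq \bbM_B \leq \omega_B\identity$, convexity gives
\[
\Lambda^{-1}\bigl((1-t)\omega(x)+t\omega_B\bigr)\identity \leq \bbM_t \leq \bigl((1-t)\omega(x)+t\omega_B\bigr)\identity
\]
for every $t\in[0,1]$. In particular $\abs{\bbM_t \xi} \eqsim \abs{\bbM_t}\abs{\xi}$ with constants depending only on $\Lambda$, $\abs{\bbM_t} \leq \abs{\bbM(x)}+\abs{\bbM_B}$, and the same reasoning applied at the endpoints yields $\abs{\mathcal{A}(x,\xi)} \eqsim \abs{\bbM(x)}^p\abs{\xi}^{p-1}$ and $\abs{\mathcal{A}_B(\xi)} \eqsim \abs{\bbM_B}^p\abs{\xi}^{p-1}$.

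Differentiating $F$ via the product rule (using $\tfrac{d}{dt}\bbM_t = \bbM_B - \bbM(x)$) and applying the triangle and Cauchy--Schwarz inequalities gives the crude bound
\[
\abs{F'(t)} \lesssim \abs{\bbM_t\xi}^{p-2}\abs{\bbM_t}\abs{\bbM_B-\bbM(x)}\abs{\xi}.
\]
Substituting $\abs{\bbM_t\xi}\eqsim\abs{\bbM_t}\abs{\xi}$ collapses this to $\abs{F'(t)} \lesssim \abs{\bbM_B-\bbM(x)}(\abs{\bbM_t}\abs{\xi})^{p-1}$, and the bound $\abs{\bbM_t}\leq\abs{\bbM(x)}+\abs{\bbM_B}$ makes the right-hand side independent of $t$. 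Integration over $[0,1]$ and the elementary equivalence $a^p+b^p\eqsim(a+b)^p$, applied in the form
\[
\bigl(\abs{\bbM(x)}+\abs{\bbM_B}\bigr)^{p-1}\abs{\xi}^{p-1} \eqsim \frac{\abs{\bbM(x)}^p+\abs{\bbM_B}^p}{\abs{\bbM(x)}+\abs{\bbM_B}}\abs{\xi}^{p-1} \eqsim \frac{\abs{\mathcal{A}(x,\xi)}+\abs{\mathcal{A}_B(\xi)}}{\abs{\bbM(x)}+\abs{\bbM_B}},
\]
then yields the claim. The main technical subtlety is the apparently singular factor $\abs{\bbM_t\xi}^{p-2}$ when $p<2$: the uniform condition-number bound on $\bbM_t$ is precisely what forces $\abs{\bbM_t\xi}$ to scale strictly like $\abs{\bbM_t}\abs{\xi}$ for every $t\in[0,1]$ (the case $\xi=0$ being trivial), so that the combination $\abs{\bbM_t\xi}^{p-2}\abs{\bbM_t}\abs{\xi}$ reduces cleanly to $(\abs{\bbM_t}\abs{\xi})^{p-1}$ without singularities across the whole range $1<p<\infty$.
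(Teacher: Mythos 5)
Your proof is correct, but it takes a genuinely different route from the paper. The paper splits by the triangle inequality into $\abs{\bbM_B}\,\abs{A(\bbM_B\xi)-A(\bbM(x)\xi)}+\abs{\bbM_B-\bbM(x)}\,\abs{A(\bbM(x)\xi)}$ and then controls the first term through the shifted N-function machinery, namely $\abs{A(P)-A(Q)}\eqsim(\phi_{\abs{Q}})'(\abs{P-Q})$ from Lemma~\ref{lem:hammer} together with $\phi_a'(t)\eqsim(a\vee t)^{p-2}t$; the condition-number bounds enter only to identify $\abs{\bbM_B\xi}\vee\abs{\bbM(x)\xi}$ with $(\abs{\bbM_B}+\abs{\bbM(x)})\abs{\xi}$. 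You instead interpolate along the segment $\bbM_t=(1-t)\bbM(x)+t\bbM_B$ and differentiate, and your key observation --- that the two-sided bounds $\Lambda^{-1}\omega\,\identity\leq\bbM\leq\omega\,\identity$ are preserved under convex combination, so $\abs{\bbM_t\xi}\eqsim\abs{\bbM_t}\abs{\xi}$ uniformly in $t$ and the factor $\abs{\bbM_t\xi}^{p-2}$ is never singular for $\xi\neq0$ --- is exactly what makes the fundamental-theorem-of-calculus argument go through for all $1<p<\infty$. Both arguments converge on the same final identification $\abs{\mathcal{A}(x,\xi)}+\abs{\mathcal{A}_B(\xi)}\eqsim(\abs{\bbM(x)}+\abs{\bbM_B})^p\abs{\xi}^{p-1}$. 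The paper's version is shorter because the shifted-N-function toolkit is already set up and used throughout; yours is more elementary and self-contained, at the price of a slightly longer computation of $F'(t)$ and the (correctly handled) differentiability caveat at $\xi=0$ and for $p<2$.
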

\begin{proof}
  Note that
  $\abs{\bbM_B \xi} \eqsim \abs{\bbM_B}\abs{\xi}$
  and $\abs{\bbM \xi} \eqsim \abs{\bbM} \abs{\xi}$ due to
  \eqref{eq:mon-Mxi}, \eqref{eq:mon-MBxi}
  and~\eqref{eq:mon-MBest}. Thus, we can 
  estimate 
  \begin{align*}
    \lefteqn{\abs{\mathcal{A}_B(\xi) - \mathcal{A}(x,\xi)}
    =
    \bigabs{\bbM_B A(\bbM_B \xi) - \bbM(x) A(\bbM(x) \xi)}} \qquad
    &
    \\
    &\leq \bigabs{\bbM_B} \bigabs{A(\bbM_B \xi) - A(\bbM(x) \xi)} +
      \abs{\bbM_B - \bbM(x)}\, \abs{A(\bbM(x) \xi)}
    \\
    &\lesssim \bigabs{\bbM_B} \phi_{\abs{\bbM_B \xi} \vee \abs{\bbM(x) \xi}}'
      \big(\abs{\bbM_B \xi - \bbM(x) \xi}\big) +
      \abs{\bbM_B - \bbM(x)}\, \abs{A(\bbM(x) \xi)}
    \\
    &\lesssim \bigabs{\bbM_B} \frac{\abs{\bbM_B - \bbM(x)}}{\abs{\bbM_B} +
      \abs{\bbM(x)}}  \big( \big(\abs{\bbM_B} +
      \abs{\bbM(x)}\big) \abs{\xi} \big)^{p-1}
    \\
    &\quad +
      \abs{\bbM_B - \bbM(x)}\, \big(\abs{\bbM(x)} \abs{\xi}\big)^{p-1}
    \\
    &\lesssim \frac{\bigabs{\bbM_B-\bbM(x)}}{\abs{\bbM_B} + \abs{\bbM(x)}}
      \big(\abs{\bbM_B} + \abs{\bbM(x)}\big)^p \abs{\xi}^{p-1}
    \\
    &\eqsim
      \frac{\bigabs{\bbM_B-\bbM(x)}}{\abs{\bbM_B} + \abs{\bbM(x)}} \big(
      \abs{\mathcal{A}_B(\xi)} + \abs{\mathcal{A}(x,\xi)}\big)
  \end{align*}
  using also~\eqref{eq:phi_a-approx} in the fourth step.  This proves the lemma.
\end{proof}
We are now prepared for our comparison estimate.
From the equation~\eqref{eq:h_sol} for~$h$, the homogeneity
of~$\mathcal{A}$, and the equation~\eqref{eq:calA} for~$u$ we deduce
that
\begin{align}
  \label{eq:z}
  \begin{aligned}
    \lefteqn{-\divergence \big(\mathcal{A}_B(\nabla
      z)-\mathcal{A}_B(\nabla h)) } \quad &
    \\
    &= - \divergence\big(\mathcal{A}_B(\nabla z)\big)
    \\
    &= - \divergence\big(\mathcal{A}_B(\nabla z) - \mathcal{A}(\cdot,
    \nabla z)\big) - \divergence\big(\mathcal{A}(\cdot, \nabla z) -
    \mathcal{A}(\cdot,\nabla z + g))
    \\
    &\quad - \divergence\big(\mathcal{A}(\cdot,\zeta^{p'} \nabla u))
    \\
    &= - \divergence\big(\mathcal{A}_B(\nabla z) - \mathcal{A}(\cdot,
    \nabla z)\big) - \divergence\big(\mathcal{A}(\cdot, \nabla z) -
    \mathcal{A}(\cdot,\nabla z + g))
    \\
    &\quad - \divergence\big(\mathcal{A}(\cdot,\nabla u) \zeta^p)
    \\
    &= - \divergence\big(\mathcal{A}_B(\nabla z) -
    \mathcal{A}(\cdot,\nabla z)\big) -
    \divergence\big(\mathcal{A}(\cdot, \nabla z) - \mathcal{A}(\cdot,\nabla
    z + g))
    \\
    &\quad - \zeta^p\divergence\big(\mathcal{A}(\cdot, G)) - \nabla (\zeta^p)
    \mathcal{A}(\cdot,\nabla u).
  \end{aligned}
\end{align}
\begin{proposition}[Comparison]
  \label{pro:comparison}
  Recall, that $B=B_r$, $B_0=B_R(x_0)$, $4B \subset 2B_0$ and $z,g,h$ are given
  by~\eqref{eq:def-z}, \eqref{eq:def-g} and \eqref{eq:h_sol},
  respectively.
  There exist $s>1$ and $\kappa_4=\kappa_4(p,n,\Lambda)$, such that if
  $\abs{\log \bbM }_{\setBMO(2B)}\le \kappa_4$, then for
  every~$\delta \in (0,1)$ there holds
  \begin{align*}
    \lefteqn{\dashint_B  \abs{\mathcal V_B(\nabla
    h) -\mathcal V_B(\nabla z)}^2 \,dx} \qquad
    &
    \\
    &\leq c\, \big(\abs{\log
      \bbM}_{\setBMO(B)}^2 + \delta \big) 
      \bigg( \dashint_B (\abs{\nabla z}^p \omega^p)^s \, dx\bigg
      )^{\frac 1 {s}} 
    \\
    & + \quad      c\, \delta^{1-p} \Bigg(
      \dashint_{4B} \bigg(\frac{\abs{u-\mean{u}_{2B_0}}^p}{R^p}
      \omega^p \bigg)^s \,dx \Bigg)^{\frac 1s}
      + c \,  \delta^{1-p}
      \bigg( \dashint_{4B} \big(\zeta^p \abs{G}^{p} \omega^p\big)^s\,dx
      \bigg)^{\frac 1s}.
  \end{align*}
\end{proposition}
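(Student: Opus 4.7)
The plan is to test the identity \eqref{eq:z} against $\varphi := z - h$, which belongs to $W^{1,p}_{\omega_B}(B)$ with zero trace on $\partial B$ and is therefore admissible for this equation. The left-hand side becomes $\int_B (\mathcal{A}_B(\nabla z) - \mathcal{A}_B(\nabla h)) \cdot \nabla(z - h)\,dx$, which by Lemma~\ref{lem:hammer} is equivalent to $\int_B |\mathcal{V}_B(\nabla z) - \mathcal{V}_B(\nabla h)|^2\,dx$, the quantity we want to bound. On the right, the three divergence terms in \eqref{eq:z} are paired with $\nabla(z-h)$, the non-divergence $G$-piece $-\zeta^p\divergence\mathcal{A}(\cdot,G)$ is tested against $z-h$ and integrated by parts (moving the gradient onto $\zeta^p \varphi$), and the remaining piece $-\nabla(\zeta^p)\mathcal{A}(\cdot,\nabla u)$ is paired directly with $z - h$. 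The whole proof then amounts to applying a shifted Young inequality of the form \eqref{eq:young2} to each of the four resulting contributions so as to absorb small multiples of $\int_B |\mathcal V_B(\nabla z)-\mathcal V_B(\nabla h)|^2\,dx$ back into the left-hand side.

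For the freezing error $\int_B (\mathcal{A}_B(\nabla z) - \mathcal{A}(\cdot, \nabla z)) \cdot \nabla(z-h)\,dx$, Lemma~\ref{lem:AB-A} gives the pointwise bound $(|\bbM_B-\bbM(x)|/(|\bbM_B|+|\bbM(x)|))(|\mathcal{A}_B(\nabla z)|+|\mathcal{A}(\cdot,\nabla z)|)$ on the first factor. A shifted Young's step together with change of shift (Lemma~\ref{lem:change_of_shift}) to pass from the shifts $|\bbM_B\nabla z|$ and $|\bbM\nabla z|$ to $|\bbM_B\nabla h|$ gives $\delta$ times the absorbable quantity plus a remainder of the form $\int_B(|\bbM_B-\bbM|/|\bbM_B|)^2 |\nabla z|^p\omega^p\,dx$. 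A Hölder split and Proposition~\ref{pro:small} applied to $\bbM$ convert the $(|\bbM_B-\bbM|/|\bbM_B|)$ factor into the desired $|\log \bbM|_{\setBMO(B)}$, at the cost of raising $|\nabla z|^p\omega^p$ to the power $s$ appearing in the statement; the auxiliary $\omega$-averages are controlled by Proposition~\ref{pro:small-scalar}. The cut-off error $\int_B(\mathcal{A}(\cdot,\nabla z)-\mathcal{A}(\cdot,\nabla z+g))\cdot\nabla(z-h)\,dx$ is handled by the same identity $|\mathcal{A}(P)-\mathcal{A}(Q)|\eqsim(\phi_{|Q|})'(|P-Q|)$ from Lemma~\ref{lem:hammer} plus shifted Young's, and Lemma~\ref{lem:removal-shift} (removal of shift) converts the resulting shifted integrand into plain $|u-\mean{u}_{2B_0}|^p\omega^p/R^p$ terms on $4B$ via the explicit form of $g$.

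The two remaining contributions — the $G$-term (after the integration by parts sketched above) and the $\nabla(\zeta^p)\mathcal{A}(\cdot,\nabla u)$ term — are treated similarly by Young's inequality with removal of shift (Lemma~\ref{lem:removal-shift}), producing a $\delta$-fraction of the absorbable quantity plus the asserted $\delta^{1-p}$-factors in front of $|G|^p\omega^p$ on $B$ and $|u-\mean{u}_{2B_0}|^p\omega^p/R^p$ on $4B$. Summing the four bounds, absorbing all $\delta$-pieces into the left-hand side, and applying a final Hölder's inequality (valid thanks to the Muckenhoupt bounds from Proposition~\ref{pro:small-scalar}) yields exactly the form asserted in the proposition. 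I expect the main obstacle to be the freezing step: we need to extract the squared factor $|\log\bbM|_{\setBMO(B)}^2$, not just a generic ``small'' constant, which forces a careful combination of the Lemma~\ref{lem:AB-A} pointwise bound with Proposition~\ref{pro:small} and a Hölder split, while simultaneously keeping enough room to absorb the shifted-Young remainders into the $\mathcal V_B$-difference on the left-hand side.
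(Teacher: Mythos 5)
Your strategy coincides with the paper's: test \eqref{eq:z} with $z-h$, identify the resulting left-hand side with $\dashint_B\abs{\mathcal V_B(\nabla z)-\mathcal V_B(\nabla h)}^2\,dx$ via Lemma~\ref{lem:hammer}, and treat the four terms with Lemma~\ref{lem:AB-A}, shifted Young's inequalities, Proposition~\ref{pro:small} and Lemma~\ref{lem:removal-shift}; your description of the freezing and cut-off terms is essentially the paper's argument. However, there is a genuine gap in the last term. The contribution $\dashint_B \nabla(\zeta^{p})\cdot\mathcal{A}(\cdot,\nabla u)\,(z-h)\,dx$ cannot be reduced to $\abs{u-\mean{u}_{2B_0}}^p\omega^p/R^p$ by Young's inequality and removal of shift alone: Young's inequality leaves a multiple of $\dashint_B\abs{\nabla u}^p\omega^p\,dx$ (with a factor $(r/R)^p$ resp.\ $(r/R)^{p'}$), and on the support of $\nabla(\zeta^p)$ you cannot replace $\nabla u$ by $\nabla z+g$, since $\nabla u=\zeta^{-p'}(\nabla z+g)$ degenerates as $\zeta\to 0$. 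This full-gradient average is not among the quantities on the right-hand side of the proposition; the paper eliminates it with the Caccioppoli inequality of Proposition~\ref{pro:caccioppoli} on $2B$, which is precisely why the final estimate lives on $4B$ and why an extra $\abs{G}^p\omega^p$ term appears from this step. Your sketch omits this, as well as the case distinction $p>2$ versus $1<p<2$ needed to run the Young splitting on $\abs{\nabla u}^{p-1}\abs{z-h}$.

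A second, related inaccuracy: ``absorbing all $\delta$-pieces into the left-hand side'' fails for $p\neq 2$. The Young remainders from the $G$-term and the $\nabla(\zeta^p)$-term have the unshifted form $\delta\dashint_B\abs{\nabla z-\nabla h}^p\omega_B^p\,dx$ and $\delta\dashint_B\abs{(z-h)/r}^p\omega_B^p\,dx$, which are not comparable to $\phi_{\abs{\nabla z}}\big(\abs{\nabla z-\nabla h}\big)\omega_B^p$ and hence cannot be absorbed into $\dashint_B\abs{\mathcal V_B(\nabla z)-\mathcal V_B(\nabla h)}^2\,dx$. The paper instead controls them by the weighted \Poincare{} inequality (Proposition~\ref{pro:DreDur}) together with the triangle inequality and the minimizing property \eqref{eq:min-h} of $h$, which dominates them by $\delta\dashint_B\abs{\nabla z}^p\omega_B^p\,dx$; this is exactly the origin of the ``$+\,\delta$'' in the first term on the right-hand side of the statement. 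Without routing these pieces through the minimizer property of $h$, the asserted form of the estimate is not reached.
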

\begin{proof}
  Let $s>1$ be as in Corollary~\ref{cor:small-high-int} (so $s$ just
  depends on~$p$).  We assume
  that~$\abs{\log \bbM}_{\setBMO(B)} \leq \kappa_3$ with $\kappa_3$
  from Lemma~\ref{lem:u-omegaB} so that the comparison equation is
  well defined. We will add in the proof several other smallness
  conditions on~$\abs{\log \bbM}_{\setBMO(2B)}$ that will finally
  determine the value of~$\kappa_4$.

  Using~\eqref{eq:h_sol} and~\eqref{eq:calA} with the test
  function~$\abs{B}^{-1}(z-h)$ we obtain
  \begin{align*}
    \textrm{I}_0 &:= \dashint_B \big( \mathcal{A}_B(\nabla
                 z)-\mathcal{A}_B(\nabla h)\big) \cdot (\nabla  z-\nabla
                 h )\,dx
    \\
               &=      \dashint_B \big( \mathcal{A}_B(\nabla
                 z)-\mathcal{A}(x,\nabla z) \big) \cdot (\nabla z-\nabla
                 h )\,dx
    \\
               &\quad + \dashint_B \big( \mathcal{A}(x,\nabla
                 z)-\mathcal{A}(x,\nabla z+g) \big) \cdot (\nabla z-\nabla
                 h )\,dx
    \\
               &\quad +\dashint_B \zeta^p \mathcal{A}(x,G) \cdot (\nabla z-\nabla
                 h)\,dx 
    \\
               &\quad +\dashint_B \nabla (\zeta^p)
                 \mathcal{A}(x,\nabla u) (z-h)\,dx 
    \\
               &=: \textrm{I}_1 + \textrm{I}_2 + \textrm{I}_3 + \textrm{I}_4.
  \end{align*}
  Now,
  \begin{align}
    \label{eq:Iequiv}
    \textrm{I}_0 &\eqsim \dashint_B  \abs{V(\nabla
       z) -V(\nabla h)}^2 \omega_B^p\,dx \eqsim 
       \dashint_B \phi_{\abs{\nabla z}} \big(\abs{\nabla z - \nabla  
       h}\big) \omega_B^p \,dx.
  \end{align}
    Let us estimate~$\textrm{I}_1$. Using Lemma~\ref{lem:AB-A} we get 
  \begin{align*}
    \textrm{I}_1
    &=
      \dashint_B \big( \mathcal{A}_B(\nabla z) -\mathcal{A}(x,\nabla z)\big) \cdot (\nabla z-\nabla
      h )\,dx 
    \\
    &\lesssim \dashint_B \abs{\mathcal{A}_B(\nabla
      z)-\mathcal{A}(x,\nabla z)} \cdot \abs{\nabla z-\nabla 
      h}\,dx
    \\
    &\lesssim \dashint_B \frac{\abs{\bbM_B-\bbM}}{\abs{\bbM_B}+\abs{\bbM}}
      \big(\abs{\mathcal{A}_B(\nabla z)}+ \abs{\mathcal A(x,\nabla
      z)}\big)\abs{\nabla z-\nabla h} \, dx
    \\
    &\lesssim \dashint_B \frac{\abs{\bbM_B-\bbM}}{\abs{\bbM_B}+\abs{\bbM}}
      \big(\omega_B^p \phi'(\abs{\nabla z})+ \omega^p
      \phi'(\abs{\nabla z}) \big)\abs{\nabla z-\nabla h} \, dx.
      \end{align*} 
  Now we use Young's inequality and Lemma \ref{lem:hammer}
  \begin{align*}
    \textrm{I}_1&\le \sigma \dashint_B \phi_{\abs{\nabla z}}\big (
                  \abs{\nabla z -\nabla h}\big) \omega_B^p\,
                  dx
    \\
                &\quad +c_\sigma \dashint_B (\phi_{\abs{\nabla z}})^*
                  \bigg(\frac{\abs{\bbM_B-\bbM}}{\abs{\bbM_B}+\abs{\bbM}}
                  \phi'(\abs{\nabla z}) \bigg) \omega_B^p \,
                  dx
    \\
                &\quad +c_\sigma \dashint_B (\phi_{\abs{\nabla z}})^*
                  \bigg(\frac{\abs{\bbM_B-\bbM}}{\abs{\bbM_B}+\abs{\bbM}}
                  \frac{\omega^p}{\omega_B^p} 
                  \phi'(\abs{\nabla z}) \bigg) \omega_B^p \,
                  dx
                  :=\textrm{I}_{1,1}+\textrm{I}_{1,2}+\textrm{I}_{1,3},
  \end{align*}
  Now, using $(\phi_a)^*(\lambda t) \leq c\, (\lambda^2 +\lambda^{p'})
  \phi^{*}_a(t)$ for $a,\lambda,t \geq 0$ we obtain
  \begin{align*}
    \textrm{I}_{1,2} +
    \textrm{I}_{1,3}
    \leq c_\sigma \dashint_B (\phi_{\abs{\nabla z}})^*
    \bigg(\frac{\abs{\bbM_B-\bbM}}{\abs{\bbM_B}+\abs{\bbM}}
    \phi'(\abs{\nabla z}) \bigg) \bigg( 1 +
    \frac{\omega^{2p}}{\omega_B^{2p}} +
    \frac{\omega^{pp'}}{\omega_B^{pp'}}
    \bigg) \omega_B^p \,
    dx
  \end{align*}
  Now,~\eqref{eq:phialambdaa} and $(\phi_{\abs{a}})^*(\phi'(\abs{a})) \eqsim
  \phi(\abs{a})$ imply
  \begin{align*}
    \textrm{I}_{1,2} +
    \textrm{I}_{1,3}
    &\leq c_\sigma \dashint_B  \bigg( \frac{\abs{\bbM_B-\bbM}}{
    \abs{\bbM_B}+\abs{\bbM}} \bigg)^2
    (\phi_\abs{\nabla z})^*\big(\phi'(\abs{\nabla z})\big)
    \bigg( 1 +
    \frac{\omega^{2p}}{\omega_B^{2p}} +
    \frac{\omega^{pp'}}{\omega_B^{pp'}}
    \bigg) \omega_B^p \,
    dx
    \\
    &\eqsim c_\sigma \dashint_B  \bigg( \frac{\abs{\bbM_B-\bbM}}{
      \abs{\bbM_B}+\abs{\bbM}} \bigg)^2
      \phi(\abs{\nabla z})
      \bigg( \frac{\omega_B^p}{\omega^p} +
      \frac{\omega^{p}}{\omega_B^{p}} +
      \frac{\omega^{p'}}{\omega_B^{p'}}
      \bigg) \omega^p \,
      dx
  \end{align*}
  Now, we can use H\"older's inequality to conclude with
  Proposition~\ref{pro:small-scalar} 
  \begin{align*}
    \textrm{I}_{1,2}+\textrm{I}_{1,3}
    &\lesssim c \bigg( \dashint_B \bigg(
      \frac{\abs{\bbM_B-\bbM}}{\abs{\bbM_B}+\abs{\bbM}}
      \bigg)^{4s'}\, dx \bigg)^{\frac 1 {2s'} } \bigg( \dashint_B
      (\abs{\nabla z}^p \omega^p)^s \, dx\bigg    )^{\frac 1 {s}} 
    \\
    &\quad \quad \cdot \Bigg(
       \dashint_B 
      \bigg( \frac{\omega_B^p}{\omega^p} +
      \frac{\omega^{p}}{\omega_B^{p}} +
      \frac{\omega^{p'}}{\omega_B^{p'}}
            \bigg)^{2s'}\,
      dx \Bigg)^{\frac 1 {2s'}}
    \\
    &\lesssim c (2^p + 2^{p'})\bigg( \dashint_B \bigg(
      \frac{\abs{\bbM_B-\bbM}}{\abs{\bbM_B}+\abs{\bbM}}
      \bigg)^{4s'}\, dx \bigg)^{\frac 1 {2s'} } \bigg( \dashint_B
      (\abs{\nabla z}^p \omega^p)^s \, dx\bigg    )^{\frac 1 {s}} .
  \end{align*}
  Now, Proposition~\ref{pro:small} and the additional smallness
  assumption $\abs{\log \bbM}_{\setBMO(B)} \leq \kappa_1$ implies
  \begin{align*}
    \textrm{I}_{1,2}+\textrm{I}_{1,3}
    &\lesssim c\, (2 s')^2 \abs{\log \bbM}_{\setBMO(B)}^2 \bigg( \dashint_B
      (\abs{\nabla z}^p \omega^p)^s \, dx\bigg    )^{\frac 1 {s}} .
  \end{align*}
  Now we estimate $\textrm{I}_2$ as
  \begin{align*}
    \textrm{I}_2
    &=      \dashint_B \big( \mathcal{A}(x,\nabla
      z)-\mathcal{A}(x,\nabla z+g) \big) \cdot (\nabla z-\nabla
      h )\,dx
    \\
    &\le c \dashint_{B_r}\phi'_{\abs{\nabla z}}(\abs{g})\,\abs{\nabla
      z-\nabla h} \omega^p\, dx.
  \end{align*}
  By Young's inequality with $\phi_{\abs{\nabla z}}$ we get for some~$\sigma>0$
  \begin{align*}
    \textrm{I}_2
    &\le \sigma \dashint_{B_r}\phi_{\abs{\nabla z}}(\abs{\nabla
      z-\nabla h}) \omega_B^p dx
      +  c_\sigma \dashint_{B_r}(\phi_{\abs{\nabla
      z}})^*\bigg( \phi_{\abs{\nabla
      z}}' ( \abs{g} ) \frac{\omega^p}{\omega_B^p}\bigg)
      \omega_B^p\, dx
    \\
    &=: \textrm{I}_{2,1} + \textrm{I}_{2,2}.
  \end{align*}
  We fix~$\sigma>0$ so small such that
  $\textrm{I}_{2,1} \leq \frac 18 \textrm{I}_0$. Now, using that $\sigma$ is
  fixed, we can replace~$c_\sigma$ in $\textrm{I}_{2,2}$ by~$c$.  We calculate
  \begin{align*}
    \textrm{I}_{2,2}
    &\leq c\,\dashint_{B_r} (\phi_{\abs{\nabla
      z}})^*\Big( \phi_{\abs{\nabla
      z}}' ( \abs{g} )\Big) \bigg(
      \frac{\omega^{pp'}}{\omega_B^{pp'}} +
      \frac{\omega^{2p}}{\omega_B^{2p}} \bigg)
      \omega_B^p\, dx
    \\
    &\leq c\,\dashint_{B_r} \phi_{\abs{\nabla
      z}}( \abs{g} ) \bigg(
      \frac{\omega^{pp'}}{\omega_B^{pp'}} +
      \frac{\omega^{2p}}{\omega_B^{2p}} \bigg)
      \omega_B^p\, dx.
  \end{align*}
  With Lemma~\ref{lem:removal-shift} we can remove the shift
  from~$\phi_{\abs{\nabla z}}$ and obtain
  \begin{align*}
    \textrm{I}_{2,2} 
    &\leq \delta\,c
      \dashint_{B_r}\abs{\nabla z}^p \bigg(
      \frac{\omega^{pp'}}{\omega_B^{pp'}} +
      \frac{\omega^{2p}}{\omega_B^{2p}} \bigg)
      \frac{\omega_B^p}{\omega^p} \omega^p \,dx + c\, \delta^{1-p}
      \dashint_{B_r}\abs{g}^p \bigg(
      \frac{\omega^{pp'}}{\omega_B^{pp'}} +
      \frac{\omega^{2p}}{\omega_B^{2p}} \bigg)       \frac{\omega_B^p}{\omega^p} \omega^p\, dx.
  \end{align*}
  As before we can use H\"older's inequality and
  Proposition~\ref{pro:small-scalar} to get rid of the extra weight
  factors at the expense of a slightly larger power.
  \begin{align*}
    \textrm{I}_{2,2} 
    &\leq \delta\,c
      \bigg( \dashint_{B_r}\big(\abs{\nabla z}^p  \omega^p\big)^s \,dx
      \bigg)^{\frac 1s}+ c\, \delta^{1-p} \bigg(
      \dashint_{B_r}\big(\abs{g}^p \omega^p\big)^s
      \, dx \bigg)^{\frac 1s}.
  \end{align*}
  Using $g=(u - \mean{u}_{2B_0})
      p' \zeta^{p'-1} \nabla \zeta$ we get
  \begin{align*}
    \textrm{I}_{2,2} 
    &\leq \delta\,c
      \bigg( \dashint_{B_r}\big(\abs{\nabla z}^p  \omega^p\big)^s \,dx
      \bigg)^{\frac 1s}+ c\, \delta^{1-p} \bigg(
      \dashint_{B_r}\bigg(\frac{\abs{u - \mean{u}_{2B_0}}^p}{R^p}
      \omega^p\bigg)^s 
      \, dx \bigg)^{\frac 1s}.
  \end{align*}
  Let us estimate~$\textrm{I}_3$. We estimate with Young's inequality
  and $0 \leq \zeta \leq 1$
  \begin{align*}
    \textrm{I}_3 &\lesssim \dashint_B \zeta^{p-1} \omega^p \abs{G}^{p-1}
                   \bigg(\zeta \abs{\nabla z
                   - \nabla h} +  \frac{\abs{z-h}}{r} \bigg)\,dx
    \\
                 &\leq \delta^{1-p}\,c \dashint_B \frac{\omega^{p'}}{\omega_B^{p'}}
                   \zeta^p \abs{G}^p \omega^p\,dx
                   + \delta\,c\dashint_B \abs{\nabla z - \nabla h}^p
                   \omega_B^p\,dx
                   + \delta\,c\dashint_B \biggabs{\frac{z -
                   h}{r}}^p
                   \omega_B^p\,dx
    \\
    &=: \textrm{I}_{3,1}+ \textrm{I}_{3,2} + \textrm{I}_{3,3}.
  \end{align*}
  For the calculations that follow consider the term~$z-h$ to be
  extended outside of~$B$ by zero. By the weighted \Poincare{} estimate
  of Proposition~\ref{pro:DreDur} we estimate $\textrm{I}_{3,3}
  \lesssim \textrm{I}_{3.2}$.
  By triangle inequality and the minimizing property of~$h$,
  see~\eqref{eq:min-h}, we obtain
  \begin{align*}
    \textrm{I}_{3,2} &\leq \delta\, c \bigg( \dashint_B   \abs{\nabla z}^p
                       \omega_B^p\,dx + \dashint_B   \abs{\nabla h}^p
                       \omega_B^p\,dx\bigg) \leq \delta\, c\, \dashint_B   \abs{\nabla z}^p
                       \omega_B^p\,dx.
  \end{align*}
  As before we can use H\"older's inequality and
  Proposition~\ref{pro:small-scalar} to correct the weight slightly at
  the expense of a slightly larger power. We get
  \begin{align*}
    \textrm{I}_{3,2} &\leq  \delta\, c\, \bigg(\dashint_B   \big(\abs{\nabla z}^p
                       \omega^p\big)^s\,dx \bigg)^{\frac 1s}.
  \end{align*}
  By the same trick
  \begin{align*}
    \textrm{I}_{3,1} &\leq \delta^{1-p}\,c \bigg(\dashint_B 
                   \big(\zeta^p \abs{G}^p \omega^p\big)^s\,dx \bigg)^{\frac
                       1s}. 
  \end{align*}
  It remains to estimate~$\textrm{I}_4$.
  For the calculations that follow consider the term~$z-h$ to be
  extended outside of~$B$ by zero. From here we need to distinguish cases~$p>2$ and~$1<p<2$.   
  
  We start from  the case~$p>2$.  Using  $\nabla z+g=\zeta^{p'} \nabla u$  we get 
  \begin{align*}
    \textrm{I}_{4}&\leq \dashint_B
                    \abs{\nabla (\zeta^{p})}\abs{\mathcal{A}(x,\nabla
                    u)}\cdot\abs{(z-h)}\, dx
    \\
                  &\lesssim \dashint_B \zeta^{\frac 1 {p-1}}
                    \abs{\nabla\zeta}\abs{\zeta^{p'}\nabla u}^{p-2}
                    \omega^{p-1}\abs{\nabla u}\cdot \abs{z-h}
                    \omega\, dx
    \\
                  &\lesssim \dashint_B  \frac r
                    R\abs{\nabla
                    z+g}^{(p-2)}\abs{\nabla u} \biggabs{\frac
                    {z-h} r}\, \omega^p\, dx
    \\
                  &\lesssim \delta\bigg( \dashint_B \abs{\nabla z
                    +g}^p\omega^p\, dx\bigg) +c_\delta\bigg(
                    \dashint_B \frac{r^p}{R^p}\abs{\nabla
                    u}^p\omega^p\, dx\bigg) +\delta\bigg(\dashint_B
                    \biggabs{\frac{z-h}r}^p\omega^p\, dx\bigg)
    \\
                  &:= \textrm{I}_{4,1}+\textrm{I}_{4,2}+\textrm{I}_{4,3},
  \end{align*}
  where we have applied Young's inequality with
  exponents~$\frac{p}{p-2}$,~$p$,~$p$ at the last step. We estimate
  now using triangle inequality and
  $g=-(u - \mean{u}_{2B_0}) p' \zeta^{p'-1} \nabla \zeta$,
  see~\eqref{eq:def-g},
  \begin{align*}
    \textrm{I}_{4,1}
    &\lesssim \delta\bigg( \dashint_B \abs{g}^p\omega^p\,
      dx\bigg)+\delta \bigg( \dashint_B \abs{\nabla z}^p\omega^p\,
      dx\bigg)
    \\
    &
      \lesssim \delta\bigg( \dashint_B \biggabs{\frac{u -\mean{u}_{2B_0}}{R}}^p\omega^p\, dx\bigg)+\delta \bigg( \dashint_B \abs{\nabla z}^p\omega^p\, dx\bigg).
  \end{align*}
  % Later we use the definition of the funcion~$g$, see~\eqref{eq:def-g}, to estimate  the $\textrm{I}_{4,1}$ as the lower order term.  
  Using Caccioppoli inequality  we get
  \begin{align*}
    \textrm{I}_{4,2}
    &\lesssim  c_\delta \frac{r^p}{R^p} \bigg(
      \dashint_{B} \abs{\nabla u}^p\omega^p\,
      dx\bigg)
    \\
    &\lesssim  c_\delta  \frac{r^p}{R^p} \bigg(
      \dashint_{2B}\bigg(\frac{\abs{u-\mean{u}_{2B}}}{r}\bigg)^p\omega^p\,dx+
      \dashint_{2B} \abs{G}^p\omega^p\, dx\bigg)
    \\
    &\lesssim c_\delta\dashint_{2B} \biggabs{\frac{u-\mean{u}_{2B_0}}{R}}^p\omega^p\,dx+  c_\delta  \dashint_{2B} \abs{G}^p\omega^p\, dx ,
  \end{align*}
  where we used that $r<R$ and changed the mean value $\mean{u}_{2B}$
  to the worse approximation $\mean{u}_{2B_0}$.  It remains to
  estimate~$\textrm{I}_{4,3}$. Using \Poincare{}'s inequality of
  Proposition~\ref{pro:DreDur} we get
  triangle inequality, and the minimizing property of~$h$ we get for
  some $\theta \in (0,1)$
  \begin{align*}
    \textrm{I}_{4,3}
    &\lesssim \delta\dashint_B
      \biggabs{\frac{z-h}{r}}^p\omega^p\,dx
      \lesssim \delta\bigg(\dashint_B\big(\abs{\nabla z-\nabla
      h}\omega\big)^{\theta p}\,dx \bigg)^{\frac 1 \theta}
  \end{align*}
  As before we can correct the weight~$\omega^p$ to~$\omega_B^p$  by
  H\"older's inequality and Proposition~\ref{pro:small-scalar} and
  then use the triangle inequality and the minimizing property of~$h$.
  \begin{align*}
    \textrm{I}_{4,3}
    &\lesssim\delta\dashint_B\abs{\nabla z-\nabla h}^p\omega_B^p\,dx
    \lesssim\delta\dashint_B\abs{\nabla z}^p\omega_B^p\,dx.
  \end{align*}
  Now, we can change the weight~$\omega_B^p$ back to~$\omega^p$ by
  H\"older's inequality and Proposition~\ref{pro:small-scalar} at the   expense of a slightly increased exponent~$s>1$.
  \begin{align*}
    \textrm{I}_{4,3}
    &\lesssim\delta\dashint_B\abs{\nabla z-\nabla h}^p\omega_B^p\,dx
    \lesssim\delta\bigg(\dashint_B\big(\abs{\nabla z}^p\omega^p\big)^s\,dx
      \bigg)^{\frac 1s}.
  \end{align*}
  This completes the case~$p>2$.
  
  For case~$1<p<2$ we estimate with Young's inequality
  \begin{align*}
    \textrm{I}_4 
    &\lesssim \dashint  \abs{\nabla \zeta} \zeta^{p-1} \abs{\nabla
      u}^{p-1} \abs{z-h} \, \omega^p\,dx
    \\
                 &\leq   c\, \delta^{1-p}
                   \dashint_B  \abs{\nabla u}^p  \frac{r^{p'}}{R^{p'}}
                   \frac{\omega^{pp'}}{\omega_B^{p'}}\,dx 
                   + \delta\,\dashint_{B}  \biggabs{\frac{z-h}{r}}^p \omega_B^p\,dx 
    \\
                 &\leq c\, \delta^{1-p}
                   \dashint_B  \abs{\nabla u}^p \frac{r^{p'}}{R^{p'}} \frac{\omega^{pp'}}{\omega_B^{p'}}\,dx
                   + \delta\,\dashint_B   \abs{\nabla z - \nabla h}^p
                   \omega_B^p\,dx
    \\
                 &=: \textrm{I}_{4,1} + \textrm{I}_{4,2}.
  \end{align*}
  By triangle inequality and the minimizing property of~$h$,
  see~\eqref{eq:min-h}, we obtain
  \begin{align*}
    \textrm{I}_{4,2} &\leq \delta\, c \bigg( \dashint_B   \abs{\nabla z}^p
      \omega_B^p\,dx + \dashint_B   \abs{\nabla h}^p
      \omega_B^p\,dx\bigg) \leq \delta\, c\, \dashint_B   \abs{\nabla z}^p
                       \omega_B^p\,dx
  \end{align*}
  As before, we can correct the weight $\omega_B^p$ to $\omega^p$ by
  H\"older's inequality and Proposition~\ref{pro:small-scalar} at the
  expense of a slightly increased exponent~$s>1$. We get
  \begin{align*}
    \textrm{I}_{4,2} &\leq \delta\, c \bigg( \dashint_B \big( \abs{\nabla z}^p
                       \omega^p\big)^s\,dx\bigg)^{\frac 1s}.
  \end{align*}
  Also at the term~$\textrm{I}_{4,1}$ we can correct the weight
  $\omega^{pp'} \omega_B^{-{p'}}$ to~$\omega^p$ and obtain
  \begin{align*}
    \textrm{I}_{4,1}
    &\leq c\, \delta^{1-p} \frac{r^{p'}}{R^{p'}}
      \bigg( \dashint_B  \big(\abs{\nabla u}^p \omega^p\big)^s
      \,dx \bigg)^{\frac 1s}.
  \end{align*}
  Now, by Corollary~\ref{cor:small-high-int} and the Caccioppoli inequality from
  Proposition~\ref{pro:caccioppoli} and $r \leq R$ we get
  \begin{align*}
    \textrm{I}_{4,1}
    &\leq 
      c\,  \delta^{1-p} \frac{r^{p'}}{R^{p'}} \dashint_{2B}\abs{\nabla u}^p
      \omega^p \,dx + c \,  \delta^{1-p} \frac{r^{p'}}{R^{p'}}
      \bigg( \dashint_{2B} \big(\abs{G}^{p} \omega^p\big)^s\,dx
      \bigg)^{\frac 1s}
    \\
    &\leq 
      c\,  \delta^{1-p} \frac{r^{p}}{R^{p}} \dashint_{2B}\abs{\nabla u}^p
      \omega^p \,dx + c \,  \delta^{1-p} \frac{r^{p'}}{R^{p'}}
      \bigg( \dashint_{2B} \big(\abs{G}^{p} \omega^p\big)^s\,dx
      \bigg)^{\frac 1s}
    \\
    &\leq 
      c\,  \delta^{1-p} \dashint_{4B}\frac{\abs{u -
      \mean{u}_{4B}}^p}{R^p} \omega^p \,dx + c \, \delta^{1-p}\frac{r^{p'}}{R^{p'}}
      \bigg( \dashint_{4B} \big(\abs{G}^{p} \omega^p\big)^s\,dx
      \bigg)^{\frac 1s}
    \\
    &\leq 
      c\, \delta^{1-p}
      \dashint_{4B}\frac{\abs{u-\mean{u}_{2B_0}}^p}{R^p} \omega^p \,dx
      + c \,  \delta^{1-p}
      \bigg( \dashint_{4B} \big(\abs{G}^{p} \omega^p\big)^s\,dx
      \bigg)^{\frac 1s},
  \end{align*}
  where, in the second line, we used that, since $1<p<2$, we have $p'>p$.

  Collecting all estimates proves the proposition.
\end{proof}

\subsection{Decay Estimates}
\label{ssec:decay-estimates}

We will now use the comparison estimate to derive certain decay
estimates of~$\mathcal{V}(\cdot,\nabla u)$.
\begin{proposition}[Decay estimate]
  \label{pro:decay-estimate}
  Recall, that $B=B_r$, $B_0=B_R(x_0)$ and let $4B \subset 2B_0$.
  There exist $\lambda \in (0, \frac 12)$, $s>1$ and
  $\kappa_5=\kappa_5(p,n,\Lambda,s)$ such that the following holds: If
  $\abs{\log \bbM }_{\setBMO(2B)}\le \kappa_5$, then for
  every~$\delta \in (0,1)$ there holds
  \begin{align*}
    \lefteqn{\dashint_{\lambda B} \abs{\mathcal{V}(x,\nabla z)  -
    \mean{\mathcal{V}(x,\nabla z)}_{\lambda B}}^2\,dx } \quad
    &
    \\
    &\leq
      \tfrac 14 \dashint_{B}  \abs{\mathcal{V}(x,\nabla z)  -
      \mean{\mathcal{V}(x,\nabla z)}_{B}}^2\,dx
    \\
    &\quad +  c\, \big(\abs{\log \bbM}_{\setBMO(B)}^2 +
      \delta \big) \dashint_B
      \abs{\mathcal{V}(x,\nabla z)}^2\,dx
    \\
    &\quad  +c\lambda^{-n} \, \delta^{1-p} \Bigg(
      \dashint_{4B} \!\!\bigg(\frac{\abs{u-\mean{u}_{2B_0}}^p}{R^p}
      \omega^p \bigg)^s \,dx \Bigg)^{\frac 1s} + c\,
      \delta^{1-p} \bigg(\dashint_{4 B}
      \big(\zeta^p \abs{\mathcal{V}(x, G)}^2 \big)^s\,dx \bigg)^{\frac 1s}.
  \end{align*}
\end{proposition}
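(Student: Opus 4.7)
The plan is to follow the classical Campanato-type harmonic approximation argument, where the role of the "good" comparison function is played by the $\mathcal{A}_B$-harmonic function $h$ from~\eqref{eq:h_sol}. The key regularity input is Proposition~\ref{pro:reg-h}, which tells us that $\mathcal V_B(\nabla h)$ enjoys Hölder-type decay of its oscillation. The comparison Proposition~\ref{pro:comparison} controls how $\nabla z$ deviates from $\nabla h$ (in the frozen $V$-functional), and Proposition~\ref{pro:small} controls how $\mathcal V_B(\cdot)$ deviates from $\mathcal V(\cdot,\cdot)$ in terms of the $\setBMO$-smallness of $\log\bbM$.

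First, using the $L^2$-minimality of the mean together with two applications of the triangle inequality, with $C:=\mean{\mathcal V_B(\nabla h)}_{\lambda B}$ inserted as intermediate constant, I would bound
\begin{align*}
  \dashint_{\lambda B}\abs{\mathcal V(\cdot,\nabla z)-\mean{\mathcal V(\cdot,\nabla z)}_{\lambda B}}^2\,dx
  &\le c\,\dashint_{\lambda B}\abs{\mathcal V(\cdot,\nabla z)-\mathcal V_B(\nabla h)}^2\,dx
  \\
  &\quad + c\,\dashint_{\lambda B}\abs{\mathcal V_B(\nabla h)-C}^2\,dx.
\end{align*}
The second summand is exactly what Proposition~\ref{pro:reg-h} controls, giving a prefactor $c\,\lambda^{2\alpha}$ times $\dashint_B\abs{\mathcal V_B(\nabla h)-\mean{\mathcal V_B(\nabla h)}_B}^2$. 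Another triangle inequality (reinserting $\mean{\mathcal V(\cdot,\nabla z)}_B$) rotates this into $c\,\lambda^{2\alpha}\,\dashint_B\abs{\mathcal V(\cdot,\nabla z)-\mean{\mathcal V(\cdot,\nabla z)}_B}^2$ plus yet more comparison errors $\dashint_B|\mathcal V(\cdot,\nabla z)-\mathcal V_B(\nabla h)|^2$. Choosing $\lambda$ so small that $c\,\lambda^{2\alpha}\le\tfrac14$ (this depends only on $p,n,\Lambda$) yields the desired $\tfrac14$-decay on the right-hand side.

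What remains is to bound $\dashint_B\abs{\mathcal V(\cdot,\nabla z)-\mathcal V_B(\nabla h)}^2\,dx$ (picking up an extra $\lambda^{-n}$ when needed from enlarging $\lambda B$ to $B$). Split
\begin{equation*}
  |\mathcal V(\cdot,\nabla z)-\mathcal V_B(\nabla h)|^2
  \lesssim
  |\mathcal V(\cdot,\nabla z)-\mathcal V_B(\nabla z)|^2
  + |\mathcal V_B(\nabla z)-\mathcal V_B(\nabla h)|^2.
\end{equation*}
The second piece is precisely what Proposition~\ref{pro:comparison} estimates, producing the $(\abs{\log\bbM}_{\setBMO(B)}^2+\delta)\cdot(\dashint_B(|\nabla z|^p\omega^p)^s)^{1/s}$ contribution together with the two $\delta^{1-p}$ data terms; after invoking Corollary~\ref{cor:small-high-int} the first of these higher-integrability norms can be absorbed into the $L^1$-norm $\dashint_B\abs{\mathcal V(\cdot,\nabla z)}^2$ required by the statement. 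For the first piece I would use the $V$-function Lipschitz bound $\abs{V(P)-V(Q)}^2\lesssim\phi_{\abs Q}(\abs{P-Q})$ with $P=\bbM\nabla z$, $Q=\bbM_B\nabla z$, which together with $|\bbM\xi-\bbM_B\xi|\le|\bbM-\bbM_B||\xi|$ gives a pointwise bound proportional to $(\omega+\omega_B)^p\,(|\bbM-\bbM_B|/(|\bbM|+|\bbM_B|))^2\,|\nabla z|^p$. A weighted Hölder inequality and the localized John-Nirenberg estimate of Proposition~\ref{pro:small} applied to $\bbM$, combined with Proposition~\ref{pro:small-scalar}\ref{itm:Ap1} for $\omega$ and $\omega_B$, then convert this exactly into the factor $|\log\bbM|_{\setBMO(B)}^2\cdot\dashint_B\abs{\mathcal V(\cdot,\nabla z)}^2$.

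The main obstacle will be the careful bookkeeping: $\lambda$ has to be fixed first to ensure the self-absorption of the $\tfrac14$-term, and only afterwards can $\kappa_5$ be chosen small enough to simultaneously satisfy the $\setBMO$-smallness needed by Propositions~\ref{pro:small},~\ref{pro:small-scalar} at the various inflated exponents $s,s',2s'$ that appear (coming from Corollary~\ref{cor:small-high-int} and the Hölder juggling between $\omega$ and $\omega_B$), and also to guarantee Proposition~\ref{pro:comparison} is applicable. A secondary technical point is that the $V$-function Lipschitz estimate in the first piece has a slightly different form in the subquadratic case $1<p<2$, so the argument naturally splits into two cases parallel to those in the proof of Proposition~\ref{pro:comparison}.
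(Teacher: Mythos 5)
Your proposal is correct and follows essentially the same route as the paper: decompose via the $\mathcal{A}_B$-harmonic comparison function $h$, use Proposition~\ref{pro:reg-h} for the $\lambda^{2\alpha}$-decay, Proposition~\ref{pro:comparison} for the $\mathcal V_B(\nabla z)-\mathcal V_B(\nabla h)$ term, Proposition~\ref{pro:small} plus weighted H\"older for the frozen-versus-unfrozen coefficient error, and fix $\lambda$ last to absorb the $\tfrac14$-term. The only differences are organizational: your triangle inequality inserts $\mathcal V_B(\nabla h)$ directly (the paper passes through $\mathcal V(\cdot,\nabla h)$ and thus also needs the sup-bound of Proposition~\ref{pro:reg-h} for the extra term $\abs{\mathcal V(\cdot,\nabla h)-\mathcal V_B(\nabla h)}^2$), and your anticipated case split $p\gtrless 2$ is unnecessary here since Lemma~\ref{lem:hammer} together with~\eqref{eq:phialambdaa} handles all $1<p<\infty$ uniformly.
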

\begin{proof}
  Let $\abs{\log \bbM}_{\setBMO(2B)} \leq \kappa_4$ with $\kappa_4$ as
  in Proposition~\ref{pro:comparison}.  Also let~$s>1$ be as in
  Proposition~\ref{pro:comparison}. Let
  $\lambda \in (0,\frac 12)$, whose   precise value will be chosen
  later. Then
  \begin{align*}
    \textrm{I}_1     &:= \lefteqn{\dashint_{\lambda B} \abs{\mathcal V(x,\nabla z)  -
                       \mean{\mathcal V(x,\nabla z)}_{\lambda B}}^2\,dx  } \quad
    &
    \\
                     &\leq  c\,
                       \dashint_{\lambda B} \abs{\mathcal V(x,\nabla h)  -
                       \mean{\mathcal V(x,\nabla h)}_{\lambda B}}^2\,dx 
                       +  c\,\dashint_{\lambda B} \abs{\mathcal V(x,\nabla z)  -
                       \mathcal V(x,\nabla h)}^2\,dx
    \\
                     &=: \textrm{I}_2 + \textrm{I}_3.
  \end{align*}
  Now, 
  \begin{align*}
    \textrm{I}_3
    &\leq c\, \lambda^{-n}   \dashint_{\frac 12 B}
      \abs{\mathcal V(x,\nabla z)  - 
      \mathcal V(x,\nabla h)}^2\,dx
    \\
    &\leq  c\, \lambda^{-n}   \dashint_{\frac 12 B}
      \abs{\mathcal{V}_B(\nabla z)  - 
      \mathcal{V}_B(\nabla h)}^2\,dx 
    \\
    &\quad
      +   c\, \lambda^{-n}   \dashint_{\frac 12 B}
      \abs{\mathcal V(x,\nabla z)  - 
      \mathcal{V}_B(\nabla z)}^2\,dx
      +   c\, \lambda^{-n}   \dashint_{\frac 12 B}
      \abs{\mathcal V(x,\nabla h)  - 
      \mathcal{V}_B(\nabla h)}^2\,dx
    \\
    &=: \textrm{I}_{3,1} + \textrm{I}_{3,2} + \textrm{I}_{3,3}.
  \end{align*}
  With the comparison of Proposition~\ref{pro:comparison} we get
  \begin{align*}
    \textrm{I}_{3,1}
    &\leq c\, \lambda^{-n}    \dashint_{B}
      \abs{\mathcal{V}_B(\nabla z)  - 
      \mathcal{V}_B(\nabla h)}^2\,dx
    \\
    &\leq c\, \lambda^{-n} \, \big(\abs{\log
      \bbM}_{\setBMO(B)}^2 + \delta \big) 
      \bigg( \dashint_B (\abs{\nabla z}^p \omega^p)^s \, dx\bigg
      )^{\frac 1 {s}} 
    \\
    &\quad +     c\lambda^{-n} \, \delta^{1-p} \Bigg(
      \dashint_{4B} \bigg(\frac{\abs{u-\mean{u}_{2B_0}}^p}{R^p}
      \omega^p \bigg)^s \!dx \Bigg)^{\frac 1s} \!\!\!
      + c \lambda^{-n} \,  \delta^{1-p}
      \bigg( \dashint_{4B} \big(\zeta^p \abs{G}^{p} \omega^p\big)^s\,dx
      \bigg)^{\frac 1s}\!\!.
  \end{align*}
  For the terms~$\textrm{I}_{3,2}$ and $\textrm{I}_{3,3}$ we will
  proceed similar to the proof of Lemma~\ref{lem:AB-A}.  Note that
  $\abs{\bbM_B \xi} \eqsim \abs{\bbM_B}\abs{\xi}$ and
  $\abs{\bbM \xi} \eqsim \abs{\bbM} \abs{\xi}$ due to
  \eqref{eq:mon-Mxi}, \eqref{eq:mon-MBxi}
  and~\eqref{eq:mon-MBest}. So Lemma~\ref{lem:hammer} and \eqref{eq:phialambdaa} imply
  \begin{align*}
    \textrm{I}_{3,2}
    &\leq c\, \lambda^{-n} \dashint_B \abs{V(\bbM \nabla z) - V(\bbM_B
      \nabla z)}^2\,dx
    \\
    &\leq c\, \lambda^{-n} \dashint_B \phi_{\abs{\bbM \nabla z} \vee
      \abs{\bbM_B \nabla z}}(\abs{\bbM(x) \nabla z - \bbM_B \nabla z})\,dx
    \\
    &\leq c\, \lambda^{-n} \dashint_B \bigg(\frac{\abs{\bbM \nabla z - \bbM_B
      \nabla z}}{\abs{\bbM \nabla z} \vee 
      \abs{\bbM_B \nabla z}} \bigg)^2
      \Big(\phi(\abs{\bbM(x) \nabla z}) +  \phi(\abs{\bbM_B
      \nabla z})\Big) \,dx
    \\
    &\leq c\, \lambda^{-n}       \dashint_B \bigg(\frac{\abs{\bbM - \bbM_B}
      }{\abs{\bbM} \vee 
      \abs{\bbM_B}} \bigg)^2 \Big(
      \phi(\abs{\bbM(x) \nabla z}) +  \phi(\abs{\bbM_B \nabla z}) \Big)\,dx
    \\
    &\leq c\, \lambda^{-n}       \dashint_B \bigg(\frac{\abs{\bbM - \bbM_B}
      }{\abs{\bbM} \vee 
      \abs{\bbM_B}} \bigg)^2 \Big(
      \abs{\nabla z}^p \omega^p + \abs{\nabla z}^p \omega_B^p \Big)\,dx
    \\
    &\leq c\, \lambda^{-n}       \dashint_B \bigg(\frac{\abs{\bbM - \bbM_B}
      }{\abs{\bbM} \vee 
      \abs{\bbM_B}} \bigg)^2 \bigg( 1 + \frac{\omega_B^p}{\omega^p}
      \bigg) 
      \abs{\nabla z}^p \omega^p\,dx
  \end{align*}
  Let $s>1$ be as in Corollary~\ref{cor:small-high-int}. Then
  H\"older's inequality with exponents $(2s',2s',s)$ implies
  \begin{align*}
    \textrm{I}_{3,2}
    &\leq c\, \lambda^{-n} \bigg( \dashint_B \bigg(
      \frac{\abs{\bbM_B-\bbM}}{\abs{\bbM_B}+\abs{\bbM}}
      \bigg)^{4s'}\, dx \bigg)^{\frac 1 {2s'} } \bigg( \dashint_B
      (\abs{\nabla z}^p \omega^p)^s \, dx\bigg    )^{\frac 1 {s}} 
    \\
    &\quad \quad \cdot \Bigg( 1 +
      \bigg( \dashint_B \Big(\frac{\omega_B^p}{\omega^p}\Big)^{2s'}\,
      dx\bigg)^{\frac 1 {2s'}} \Bigg).
  \end{align*}
  With Propositions~\ref{pro:small} and~\ref{pro:small-scalar} and
  Corollary~\ref{cor:small-high-int} we obtain 
  \begin{align*}
    \textrm{I}_{3,2}
    &\leq c\, \, \abs{\log \bbM}_{\setBMO(B)}^2 \lambda^{-n} \bigg( \dashint_B
      (\abs{\nabla z}^p \omega^p)^s \, dx\bigg    )^{\frac 1 {s}}
    \\
    &\leq c\lambda^{-n}  \abs{\log \bbM}_{\setBMO(B)}^2
      \Bigg(\dashint_{2B} \abs{\mathcal V(x,\nabla z)}^2\, dx
      +\bigg(\dashint_{2B} \abs{\mathcal V(x, G))} ^{2s}  \bigg)^\frac
      {1}{s} \Bigg) 
  \end{align*}
  Analogously, as with~$\textrm{I}_{3,2}$ we estimate
  \begin{align*}
    \textrm{I}_{3,3}
    &\leq c\, \lambda^{-n} \dashint_{\frac 12 B} \abs{V(\bbM \nabla h) - V(\bbM_B
      \nabla h)}^2\,dx
    \\
    &\leq c\, \lambda^{-n}       \dashint_{\frac 12 B} \bigg(\frac{\abs{\bbM - \bbM_B}
      }{\abs{\bbM} \vee 
      \abs{\bbM_B}} \bigg)^2 \Big(
      \abs{\nabla h}^p \omega^p + \abs{\nabla h}^p \omega_B^p
      \Big)\,dx
    \\
    &\leq c\, \lambda^{-n} \big(\max_{\frac 12 B} \omega_B^p
      \abs{\nabla h}^p\big)      \dashint_{\frac 12 B}
      \bigg(\frac{\abs{\bbM - \bbM_B} 
      }{\abs{\bbM} \vee 
      \abs{\bbM_B}} \bigg)^2 \bigg( \frac{\omega^p}{\omega_B^p} + 1\bigg)\,dx.
  \end{align*}
  The interior regularity of~$h$, see Proposition~\ref{pro:reg-h}, and
  the minimizing property of~$h$ implies
  \begin{align*}
    \textrm{I}_{3,3}
    &\leq c\, \lambda^{-n} \dashint_B \abs{\nabla h}^p \omega_B^p\,dx      \dashint_{\frac 12 B}
      \bigg(\frac{\abs{\bbM - \bbM_B} 
      }{\abs{\bbM} \vee 
      \abs{\bbM_B}} \bigg)^2 \bigg( \frac{\omega^p}{\omega_B^p} +
      1\bigg)\,dx
    \\
    &\leq c\, \lambda^{-n} \dashint_B \abs{\nabla z}^p \omega_B^p\,dx      \dashint_{\frac 12 B}
      \bigg(\frac{\abs{\bbM - \bbM_B} 
      }{\abs{\bbM} \vee 
      \abs{\bbM_B}} \bigg)^2 \bigg( \frac{\omega^p}{\omega_B^p} +
      1\bigg)\,dx.
  \end{align*}
  With H\"older's inequality, Proposition~\ref{pro:small} and
  Proposition~\ref{pro:small-scalar} we obtain
  \begin{align*}
    \textrm{I}_{3,3}
    &\leq c\, \lambda^{-n} \dashint_B \abs{\nabla z}^p \omega_B^p\,dx
      \Bigg(\dashint_{B}
      \bigg(\frac{\abs{\bbM - \bbM_B} 
      }{\abs{\bbM} \vee 
      \abs{\bbM_B}} \bigg)^4 \,dx \Bigg)^{\frac 12}
      \Bigg( \bigg(\dashint_B
      \Big(\frac{\omega^p}{\omega_B^p}\Big)^2\,dx \bigg)^{\frac 12}
      + 1
      \bigg)
    \\
    &\leq c\, \abs{\log \bbM}_{\setBMO(B)}^2 \lambda^{-n} \dashint_B
      \abs{\nabla z}^p \omega_B^p\,dx.
  \end{align*}
  With H\"older's inequality, Proposition~\ref{pro:small-scalar} and
  Corollary~\ref{cor:small-high-int} we obtain
  \begin{align*}
    \textrm{I}_{3,3}
    &\leq c\, \abs{\log \bbM}_{\setBMO(B)}^2 \lambda^{-n} \bigg(\dashint_B
      \big(\abs{\nabla z}^p \omega^p\big)^s\,dx \bigg)^{\frac 1s}
      \Bigg( \dashint \bigg(\frac{\omega_B^p}{\omega^p} \bigg)^{s'}
      \,dx \Bigg)^{\frac 1{s'}}
    \\
    &\leq  c\, \abs{\log \bbM}_{\setBMO(B)}^2 \lambda^{-n} \bigg(\dashint_B
      \big(\abs{\nabla z}^p \omega^p\big)^s\,dx \bigg)^{\frac 1s}
    \\
    &\leq  c\, \abs{\log \bbM}_{\setBMO(B)}^2 \lambda^{-n} \Bigg( \dashint_B
      \abs{\mathcal V(x,\nabla z)}^2\,dx + \bigg(\dashint_B
      \abs{\mathcal V(x,G)}^{2s}\,dx \bigg)^{\frac 1s} \Bigg).
  \end{align*}
  The final estimate for the term~$\textrm{I}_3$ takes form 
  \begin{align*}
    \textrm{I}_3
    &\leq  c\, \big(\abs{\log \bbM}_{\setBMO(B)}^2 + \delta \big)\lambda^{-n}\dashint_B
      \abs{\mathcal V(x,\nabla z)}^2\,dx
    \\
    &\quad  + c\, \big( \abs{\log \bbM}_{\setBMO(B)}^2 +
      \delta^{1-p}\big) \lambda^{-n} \bigg(\dashint_B
      \abs{\mathcal V(x,G)}^{2s}\,dx \bigg)^{\frac 1s}
    \\
    &\quad +c\lambda^{-n} \, \delta^{1-p} \Bigg(
      \dashint_{4B} \bigg(\frac{\abs{u-\mean{u}_{2B_0}}^p}{R^p}
      \omega^p \bigg)^s \,dx \Bigg)^{\frac 1s}.
  \end{align*}
  We estimate now the term~$\textrm{I}_2$:
  \begin{align*}
    \textrm{I}_2
    &=  c\,
      \dashint_{\lambda B} \abs{\mathcal V(x,\nabla h)  -
      \mean{\mathcal V(x,\nabla h)}_{\lambda B}}^2\,dx
    \\
    &\leq  c\,\dashint_{\lambda B} \abs{\mathcal{V}_B(\nabla h)  -
      \mean{\mathcal{V}_B(\nabla h)}_{\lambda B}}^2\,dx + c\,\dashint_{\lambda B} \abs{\mathcal V(x,\nabla h)  -
      \mathcal{V}_B(\nabla h)}^2\,dx
    \\
    &:=\textrm{I}_{2,1}+\textrm{I}_{2,2}.
  \end{align*}
  We use the decay estimate from Proposition~\ref{pro:reg-h} to get
  \begin{align*}
    \textrm{I}_{2,1}
    &\le c\lambda^{2\alpha} \dashint_{\frac 12 B}  \abs{\mathcal{V}_B(\nabla h)  -
      \mean{\mathcal{V}_B(\nabla h)}_{ B}}^2\,dx.
  \end{align*}
  By several triangle inequalities we obtain
  \begin{align*}
    \textrm{I}_{2,1}
    &\le  c \lambda^{2\alpha} \dashint_{B}  \abs{\mathcal V(x,\nabla z)  -
      \mean{\mathcal V(x,\nabla z)}_{B}}^2\,dx
      + c \lambda^{2\alpha}  \dashint_{\frac 12 B}  \abs{\mathcal{V}_B(\nabla z)  -
    \mathcal{V}_B(\nabla h)}^2\,dx
    \\
    &\quad + c \lambda^{2\alpha}  \dashint_{\frac 12 B}  \abs{\mathcal V(x,\nabla z)  -
      \mathcal{V}_B(\nabla z)}^2\,dx
      +c \lambda^{2\alpha} \dashint_{\frac 12 B}  \abs{\mathcal V(x,\nabla h)  -
      \mathcal{V}_B(\nabla h)}^2\,dx
    \\
    &=: \textrm{I}_{2,1,0} + \textrm{I}_{2,1,1} + \textrm{I}_{2,1,2} + \textrm{I}_{2,1,3}.
  \end{align*}
  We can estimate $\textrm{I}_{2,1,1}$, $\textrm{I}_{2,1,2}$,
  $\textrm{I}_{2,1,3}$ and as $\textrm{I}_{3,1}$, $\textrm{I}_{3,2}$ and
  $\textrm{I}_{3,3}$, respectively, except that the
  factor~$\lambda^{-n}$ is replaced by~$\lambda^{2\alpha}$. Moreover, we have
  \begin{align*}
    \textrm{I}_{2,2} &\leq c\,\lambda^{-n} \dashint_{\frac 12 B} \abs{\mathcal V(x,\nabla h)  -
      \mathcal{V}_B(\nabla h)}^2\,dx
  \end{align*}
  which can be estimated as~$\textrm{I}_{3,3}$. Overall, we can
  estimate~$\textrm{I}_2$ exactly as~$\textrm{I}_3$ (with some better
  factors as some places) but get the additional
  term~$\textrm{I}_{2,1,0}$. We arrive at the final estimate
  \begin{align*}
    \textrm{I}_1 &\leq \textrm{I}_2 + \textrm{I}_3
    \\
                 &\leq
                   c \lambda^{2\alpha} \dashint_{B}  \abs{\mathcal V(x,\nabla z)  -
                   \mean{\mathcal V(x,\nabla z)}_{B}}^2\,dx
    \\
                 &\quad +  c\,  \delta \lambda^{-n}\dashint_B
                   \abs{\mathcal V(x,\nabla z)}^2\,dx
    \\
                 &\quad  + c\, \big( \abs{\log \bbM}_{\setBMO(B)}^2 +
                   \delta^{1-p}\big) \lambda^{-n} \bigg(\dashint_{4B}
                   \abs{\mathcal V(x,G)}^{2s}\,dx \bigg)^{\frac 1s}
    \\
                 & \quad +  c\lambda^{-n} \, \delta^{1-p} \Bigg(
                   \dashint_{4B} \bigg( \frac{\abs{u-\mean{u}_{2B_0}}^p}{R^p}
                   \omega^p \bigg)^s \,dx \Bigg)^{\frac 1s}.
  \end{align*}
  Now, we fix~$\lambda \in (0, \frac 12)$ such that the
  factor $c \lambda^{2\alpha}$ is smaller than~$\frac 14$. This proves
  the claim.
\end{proof}

%We can now summarize our results in certain estimates of maximal
%functions.  
For locally integrable function~$f$ we define the
Hardy-Littlewood maximal function and the sharp maximal function for
$\rho \in [1,\infty)$ by
\begin{align*}
  \mathcal M_\rho f
  (x)&:=\sup_{B(x)}\bigg(\dashint\limits_{B(x)}\!\!\!\abs{f}^\rho
       dy\!\bigg)^{\frac 1 \rho},  
  &
    \mathcal M^\sharp_{\rho}f(x)&:= \sup_{B(x)}\bigg(\dashint\limits_{B
                               (x)} \!\!\!|f-\mean{f}_{B(x)}|^\rho
                               dy\!\bigg)^{\frac 1 \rho}. 
\end{align*}
We can use these operators to express the decay estimates of
Proposition~\ref{pro:decay-estimate} in another form.
\begin{proposition}
  \label{pro:maximal-functions}
  There exists $s>1$ and $\kappa_5=\kappa_5(p,n,\Lambda,s)$ such that
  the following holds: If
  $\abs{\log \bbM }_{\setBMO(4B_0)}\le \kappa_5$, then for almost
  all~$x \in \Rn$
  \begin{align*}
    M^\sharp_2\big(\mathcal{V}(\cdot,\nabla z)\big)(x)
    &\leq  c\, \big(\abs{\log \bbM}_{\setBMO(2B_0)} +
      \delta \big) M_2\big(\mathcal{V}(\cdot,\nabla
      z)\big)(x)
    \\
    &\quad + c 
      \delta^{1-p} R^{-p} \Big(M_{2s} \big(\indicator_{4B_0} \abs{u-
      \mean{u}_{2B_0}}^p \omega^p\big)(x) \Big)^{\frac 12}
    \\
    &\quad  + c\, \delta^{1-p} \mathcal M_{2s}\big(\indicator_{B_0}
      \mathcal{V}(\cdot,G)\big)(x)
    \\
    &\quad + c\, \frac{R^n}{(R +\abs{x})^n}
      \bigg( \dashint_{B_0}\abs{\mathcal{V}(\cdot,\nabla z)-
      \mean{\mathcal{V}(\cdot,\nabla z)}_{B_0}}^2\,dx
      \bigg)^{\frac 12}.
  \end{align*}
\end{proposition}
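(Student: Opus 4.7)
The plan is to estimate, for every ball $B \ni x$, the $L^2$-oscillation $\bigl(\dashint_B \abs{\mathcal V(\cdot,\nabla z) - \mean{\mathcal V(\cdot,\nabla z)}_B}^2\,dy\bigr)^{1/2}$ of $\mathcal V(\cdot,\nabla z)$ and then take the supremum over all such $B$. The argument splits into two regimes according to the size of $B$ relative to $B_0$, and the main ingredient for the ``small'' regime will be the geometric decay of Proposition~\ref{pro:decay-estimate}.

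\emph{Small balls.} If $B^*:=\lambda^{-1}B$ still satisfies $4B^*\subset 2B_0$ (where $\lambda\in(0,\tfrac12)$ is the constant provided by Proposition~\ref{pro:decay-estimate}), I apply that proposition to~$B^*$; since $B=\lambda B^*$, it will yield four contributions. The first is $\tfrac14\dashint_{B^*}\abs{\mathcal V - \mean{\mathcal V}_{B^*}}^2$, dominated by $\tfrac14 M^\sharp_2(\mathcal V(\cdot,\nabla z))(x)^2$ because $x\in B\subset B^*$. The second, $c(\abs{\log\bbM}_{\setBMO(B^*)}^2+\delta)\dashint_{B^*}\abs{\mathcal V}^2$, is controlled by $c(\abs{\log\bbM}_{\setBMO(4B_0)}^2+\delta)\,M_2(\mathcal V(\cdot,\nabla z))(x)^2$. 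The remaining two are $L^s$-averages on $4B^*$ involving $u-\mean{u}_{2B_0}$ and $G$; since $4B^*\subset 2B_0\subset 4B_0$, each is bounded by the pointwise Hardy--Littlewood maximal function of the integrand restricted to $4B_0$ or $B_0$, and Jensen's inequality lets me pass from $M_s$ to $M_{2s}$. For the $G$-term I use the equivalence $\abs{\mathcal V(\cdot,G)}^2\eqsim \omega^p\abs{G}^p$ from the definition of $V$ together with $\zeta\le\indicator_{B_0}$, so it takes exactly the form $\mathcal M_{2s}(\indicator_{B_0}\mathcal V(\cdot,G))(x)^2$ required by the statement.

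\emph{Large balls.} Otherwise $4\lambda^{-1}B\not\subset 2B_0$, which forces $r_B\gtrsim R$ with a constant depending only on $\lambda$ and $n$. Here I exploit the fact that $z=(u-\mean{u}_{2B_0})\zeta^{p'}$ is supported in $B_0$, and hence so is $\mathcal V(\cdot,\nabla z)$. If $B\cap B_0=\emptyset$ the oscillation on $B$ vanishes; otherwise $B_0\subset cB$ for a fixed~$c$, and using the identity $\mean{\mathcal V(\cdot,\nabla z)}_B = (\abs{B_0}/\abs{B})\,\mean{\mathcal V(\cdot,\nabla z)}_{B_0}$ together with the support property I will bound the oscillation of $\mathcal V(\cdot,\nabla z)$ on $B$ by a suitable power of $\abs{B_0}/\abs{B}$ times the oscillation of $\mathcal V(\cdot,\nabla z)$ on $B_0$. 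Because $x\in B$ and $r_B\gtrsim R$ force $\abs{B}\gtrsim(R+\abs{x})^n$, this will yield the tail prefactor $R^n/(R+\abs{x})^n$ of the statement.

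Combining both regimes and taking the supremum over $B\ni x$ produces $\tfrac12 M^\sharp_2(\mathcal V(\cdot,\nabla z))(x)$ on the right-hand side (after square-rooting the small-ball estimate); this is finite almost everywhere because, via Corollary~\ref{cor:small-high-int} applied in a neighbourhood of $B_0$ together with $\nabla u\in L^p_\omega$ and the compactly-supported cut-off, $\mathcal V(\cdot,\nabla z)\in L^2(\Rn)$. Hence it can be absorbed into the left, and collecting the remaining terms yields the claimed inequality. The delicate step will be the large-ball regime: squeezing the correct power of $\abs{B_0}/\abs{B}$ out of the support property while arranging that what remains on the right is the oscillation of $\mathcal V(\cdot,\nabla z)$ over $B_0$ (not merely an $L^2$-average) requires careful bookkeeping with the mean-value identity above, so that the tail term lines up with the format of the conclusion.
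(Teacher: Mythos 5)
Your proposal follows essentially the same route as the paper: the same three-way split of radii (balls disjoint from $B_0$, which contribute nothing; balls whose $\lambda^{-1}$-dilates fit into $2B_0$, where Proposition~\ref{pro:decay-estimate} is applied to the enlarged ball; and large intersecting balls with $r_B\gtrsim R$ handled via $\support z\subset\overline{B_0}$), followed by absorption of the resulting $\tfrac12 M^\sharp_2$-term using its a.e.\ finiteness. One remark on the step you flag as delicate: for $f$ supported in $B_0$ one can only bound $\dashint_B\abs{f-\mean{f}_B}^2$ by $\tfrac{\abs{B_0}}{\abs{B}}\dashint_{B_0}\abs{f}^2$, not by the oscillation of $f$ over $B_0$ (consider $f=\indicator_{B_0}$); this is harmless, since the plain $L^2$-average over $B_0$ is what is actually used when the proposition is applied later, so you should not spend effort trying to force the oscillation into the tail term.
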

\begin{proof}
  We choose~$\kappa_5$, $s$ and $\lambda \in (0,\frac 12)$ as in
  Proposition \ref{pro:decay-estimate}. Since
  $\mathcal{V}(\cdot, \nabla v) \in L^2(\Rn)$,
  $\mathcal{V}(\cdot,G) \in L^2(4B_0)$ and by
  Proposition~\ref{pro:DreDur}
  $\abs{u - \mean{u}_{2B_0}}^p \omega^p \in L^s(2B_0)$ all terms in
  the following calculations are finite at least for almost every~$x$.
  Fix $x\in \Rn$. Then
  \begin{align*}
    \textrm{I} :=
    M^\sharp_2\big(\mathcal{V}(\cdot,\nabla z)\big)(x)
    &= \sup_{r>0} \bigg(\dashint_{B_r(x)} \abs{\mathcal{V}(x,\nabla z)
      - \mean{\mathcal{V}(x,\nabla z)}_{B_r(x)}}^2\,dy \bigg)^{\frac 12}.
  \end{align*}
  We split the choice of $r\in(0,\infty)$ into three parts
  \begin{enumerate}
  \item $J_1 := \set{r>0:\,B_r(x_0) \cap B_0 = \emptyset}$.
  \item $J_2 := \set{r>0\,:\,\frac{2}{\lambda} B_r(x) \subset 4 B_0}$.
  \item
    $J_3 := \set{r>0\,:\, B_r(x_0) \cap B_0 = \emptyset \text{ and }
      \frac{2}{\lambda} B_r(x) \not\subset 4 B_0}$.
  \end{enumerate}
  For $k=1,2,3$ abbreviate
  \begin{align*}
    \textrm{I}_k &:= \sup_{r \in {J}_k}  \dashint_{B_r(x)} \abs{\mathcal{V}(x,\nabla z) - \mean{\mathcal{V}(x,\nabla z)}_{B_r(x)}}\,dy.
  \end{align*}
  Since $z=0$ outside of~$B_0$, we obviously have~$\textrm{I}_1=0$.
  If $r \in {J}_2$, then by the decay estimate of
  Proposition~\ref{pro:decay-estimate} applied to $B  = \lambda^{-1}
  B_r(x)$ (with $\delta$ replaced by~$\delta^2$) we get
  \begin{align*}
    \textrm{I}_2 &\leq \tfrac 14 \textrm{I}
                   + c\, \big(\abs{\log \bbM}_{\setBMO(B)} +
                   \delta \big) M_2\big(\mathcal{V}(\cdot,\nabla
                   z)\big)(x)
    \\
                 &\quad + c 
                   \delta^{1-p} R^{-p} \Big( \mathcal M_{2s} \big(\indicator_{4B_0} \abs{u-
                   \mean{u}_{2B_0}}^p \omega^p\big)(x) \Big)^{\frac 12}
                   + c\, \delta^{1-p} M_{2s}\big(\indicator_{B_0}
                   \mathcal{V}(\cdot,G)\big)(x).
  \end{align*}
  If $r \in J_3$, then $r \geq c\, R$. It follows with $\support z
  \subset \overline{B_0}$ that
  \begin{align*}
    \textrm{I}_3 
    &\leq c\, \frac{R^n}{(R +\abs{x})^n}
      \bigg( \dashint_{B_0}\abs{\mathcal{V}(\cdot,\nabla z)-
      \mean{\mathcal{V}(\cdot,\nabla z)}_{B_0}}^2\,dx
      \bigg)^{\frac 12}.
  \end{align*}
  Combining the estimate and absorbing~$\frac 14 \textrm{I}$ (which is
  finite for almost all~$x$) we prove the claim.
\end{proof}

\subsection{Main Result Non-Linear}
\label{ssec:main}

In this section we prove our main theorem~\ref{thm:main}. We will use
Proposition~\ref{pro:maximal-functions} to prove higher integrability
of~$\mathcal{V}(\cdot,\nabla z)$ and then as a consequence of
$\abs{\nabla u}^p \omega^p$. For this we need the famous
Fefferman-Stein inequality that allows to estimate the $L^q$-norm of
the maximal operator the $L^q$-norm of the sharp maximal operator,
i.e. for $q \in (2,\infty)$ there holds
\begin{align}
  \label{eq:fefferman-stein-aux}
  \norm{\mathcal M_2 f}_q\le c(q) \norm{\mathcal M^\sharp_2 f}_q
\end{align}
for all $f \in L^q$. This allows to absorb the term with
$\big(\abs{\log \bbM}_{\setBMO(B)} + \delta \big)
M_2\big(\mathcal{V}(\cdot,\nabla z)\big)$ later on the left-hand side.
This trick was already used in~\cite{KinZho99} and more recently in ~\cite{BCGOP17} in a slightly different
form. Kinnunen and Zhou used a local version of the Fefferman-Stein
inequality. Unfortunately, the constant $c(q)$ in the version
of~\cite[Lemma~2.4]{KinZho99} depends heavily on~$q$ and is not
adequate to obtain sharp estimates\footnote{There also exists other local
  version of the Fefferman-Stein estimate in \cite[Lemma~4]{Iwa83} or
  \cite[Theorem~5.25]{DieRuzSch10}.
  %  or \cite[Lemma~15]{BCGOP17}
  However, as far as we can see
  these versions
  % as well as the version in \cite{KinZho01}
  depend
  exponentially on~$q$.}. %
We therefore present a version of Fefferman-Stein inequality with
linear dependency on~$q$ (for $q$ large).
\begin{theorem}
  \label{thm:fefferman-stein}
  Let $q > 1$. Then
  \begin{align}
    \norm{f}_q &\le c\, q\,\norm{\mathcal M^\sharp_1 f}_q
  \end{align}
  for all $f \in L^q(\Rn)$.
\end{theorem}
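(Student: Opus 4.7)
The goal is a sharp form of the Fefferman-Stein inequality with a constant that grows only linearly in $q$. The known arguments (e.g.\ Kinnunen-Zhou~\cite{KinZho99}) use a classical good-$\lambda$ inequality of the form $|\{\mathcal{M} f>2\lambda,\mathcal{M}^\sharp_1 f\le\gamma\lambda\}|\le c\,\gamma\,|\{\mathcal{M} f>\lambda\}|$; this is polynomial in $\gamma$ and forces $\gamma\sim 2^{-q}$, hence a constant $\sim C^q$. My plan is instead to obtain an \emph{exponential} good-$\lambda$ inequality and extract linear dependence on $q$ from the layer cake.

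First I would reduce to a dyadic setting (with a universal constant loss, using the classical dyadic grid comparison) so that Calder\'on-Zygmund stopping cubes are available.  For each height $\lambda>0$ I apply the Calder\'on-Zygmund decomposition to $\abs{f}$ and obtain the maximal disjoint dyadic cubes $\set{Q_j^\lambda}$ on which $\lambda<\dashint_{Q_j^\lambda}\abs{f}\,dy\le 2^n\lambda$; their union equals $\set{\mathcal{M} f>\lambda}$ up to a null set.  If some point $x_0\in Q_j^\lambda$ satisfies $\mathcal{M}^\sharp_1 f(x_0)\le \gamma\lambda$, then in particular $\dashint_{Q_j^\lambda}\abs{f-\mean{f}_{Q_j^\lambda}}\,dy\le\gamma\lambda$. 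I then iterate the Calder\'on-Zygmund stopping construction inside $Q_j^\lambda$ on $f-\mean{f}_{Q_j^\lambda}$ at level $\lambda$, at each step discarding the sub-cubes that no longer meet $\set{\mathcal{M}^\sharp_1 f\le\gamma\lambda}$ (those are absorbed into the $\mathcal{M}^\sharp_1 f$-term). Each pass reduces the total measure by a factor $\le\gamma$, and after $k$ passes the mean of $f$ on the surviving sub-cubes has grown by at most $(k+1)2^n\lambda$. This yields the quantitative good-$\lambda$
\begin{align*}
  \Bigabs{\set{\abs{f}>(k+2)2^n\lambda,\,\mathcal{M}^\sharp_1 f\le \gamma\lambda}}
  \le \gamma^{k+1}\,\Bigabs{\set{\mathcal{M} f>\lambda}}.
\end{align*}

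Next I use the layer cake. Writing $\beta=(k+2)2^n$ and integrating $\int_0^\infty q\lambda^{q-1}\cdots\,d\lambda$, this gives
\begin{align*}
  \beta^{-q}\norm{f}_q^q
  \le \gamma^{k+1}\norm{\mathcal{M} f}_q^q +\gamma^{-q}\norm{\mathcal{M}^\sharp_1 f}_q^q.
\end{align*}
Using the strong $(q,q)$-bound of $\mathcal{M}$ with the (linear in $q$) constant $cq'$, replacing $\norm{\mathcal{M} f}_q^q$ by $(cq')^q\norm{f}_q^q$ on the right-hand side, I choose $\gamma$ small but of order one (independent of $q$, e.g.\ $\gamma=1/4$) and then pick $k$ proportional to $q$ so that the factor $\gamma^{k+1}\beta^q(cq')^q$ is bounded by $\frac12$. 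After absorption and taking the $q$-th root, the remaining factor $\beta\gamma^{-1}$ is of order $q$, which yields $\norm{f}_q\le \norm{\mathcal{M} f}_q\le c\,q\,\norm{\mathcal{M}^\sharp_1 f}_q$. If $\norm{f}_q=\infty$ a priori, a truncation $f\mapsto (f\wedge N)\indicator_{B_R}$ followed by $N,R\to\infty$ justifies the absorption.

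The main obstacle is the iteration step: the hypothesis $\mathcal{M}^\sharp_1 f\le\gamma\lambda$ at a single point of $Q_j^\lambda$ only controls the mean oscillation on $Q_j^\lambda$ itself, not on deeper sub-cubes. The remedy is to propagate the hypothesis through the iteration by restricting, at every level, to sub-cubes that still meet $\set{\mathcal{M}^\sharp_1 f\le\gamma\lambda}$; the complement is absorbed into the $\mathcal{M}^\sharp_1 f$-term of the layer-cake splitting. A secondary technical point is the careful bookkeeping of the level of $\abs{f}$ through the iteration (which grows linearly in the iteration count with base $2^n$), as this is precisely what matches the $q$-dependence needed for linear growth.
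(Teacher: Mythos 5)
Your route is genuinely different from the paper's: the paper proves the linear-in-$q$ constant by $\mathcal{H}^1$--$\setBMO$ duality, using the pointwise/pairing estimate $\skp{f}{g}\le c\,\skp{\mathcal{M}^\sharp f}{\mathcal{M}g}$ from Stein and then the bound $\norm{\mathcal{M}g}_{q'}\le c\,q\,\norm{g}_{q'}$ (the factor $q=(q')'$ is where the linear growth enters). Your iterated Calder\'on--Zygmund construction is sound as far as the good-$\lambda$ inequality goes: it is a John--Nirenberg-type statement, and modulo dimensional constants (the reduction factor per pass is $c_n\gamma$ rather than $\gamma$, and the level after $k$ passes is $\le (k+2)2^n\lambda$ because the averages can jump by $2^n\lambda$ at each stopping stage) the inequality $\abs{\set{\abs{f}>(k+2)2^n\lambda,\ \mathcal{M}^\sharp_1 f\le\gamma\lambda}}\le (c_n\gamma)^{k}\abs{\set{\mathcal{M}f>\lambda}}$ is correct.

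The gap is in the final optimization, and it is not reparable within this scheme. After the layer-cake step you must arrange $\gamma^{k+1}\beta^q(cq')^q\le\tfrac12$ with $\beta=(k+2)2^n$. With $\gamma$ fixed and $k=Aq$ this factor equals $\gamma\,\bigl(\gamma^{A}\,2^n C(Aq+2)\,cq'\bigr)^q$, which tends to $+\infty$ as $q\to\infty$ for every fixed $A$, because the base grows linearly in $q$. So ``$k$ proportional to $q$'' does not close the absorption. Quantitatively, you are minimizing $\beta/\gamma\eqsim k/\gamma$ subject to $k\log(1/(c_n\gamma))\gtrsim q\log(Ck)$; since the level $\beta$ necessarily grows linearly in the number $k$ of iterations while the measure only decays geometrically (this is forced -- it is exactly the sharp John--Nirenberg profile, cf.\ $\log(1/\abs{x})$), the constraint forces $k\gtrsim q\log q$, and the best constant this argument can produce is $c\,q\log q$, not $c\,q$. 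That extra $\log q$ is fatal both for the theorem as stated and for its use in the paper: the linear dependence is precisely what turns into the smallness condition $\abs{\log\bbM}_{\setBMO}\lesssim 1/\rho$, whose sharpness is verified in Section~\ref{sec:counterexample}. To recover the linear constant you need an argument in which the maximal operator is applied on the dual side (duality with $\mathcal{H}^1$ as in the paper, or a sparse/local-mean-oscillation domination followed by a duality estimate of the form $\skp{\mathcal{A}_{\mathcal{S}}g}{h}\le c\int \mathcal{M}g\,\mathcal{M}h\,dx$), rather than a good-$\lambda$ iteration.
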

\begin{proof}
  The proof\footnote{It is also possible to proof the theorem by
    redistributional estimates as in Chapter~IV, Section~3.6,
    Corollary~1 of~\cite{Stein93harmonic}. However, the dependency
    on~$q$ is again exponential.}  is based on the duality of the
  Hardy space~$\mathcal{H}^1$ and ~$\setBMO$, see Chapter~IV,
  Section~2 of~\cite{Stein93harmonic}. By the same truncation
  arguments as in \cite{Stein93harmonic} it suffices to
  consider~$f \in L^q(\Rn) \cap L^\infty(\Rn)$. Let
  $f \in \mathcal{H}^1(\Rn) \cap L^{q'}(\Rn)$, where $\mathcal{H}^1$
  is the Hardy space. Then by~(16) of~\cite{Stein93harmonic}
  \begin{align*}
    \skp{f}{g} &\leq c\,\skp{\mathcal{M}^\sharp f}{\mathcal{M} g}
                 \leq c\,\norm{\mathcal{M}^\sharp f}_q \norm{\mathcal{M} g}_{q'}.
  \end{align*}
  It is well known that
  \begin{align}
    \label{eq:bnd-M}
    \norm{\mathcal{M} g}_{q'} \leq c\, q\, \norm{g}_{q'},
  \end{align}
  see for example Chapter~I, Section~3, Theorem~1and
  Remark~\cite{Stein93harmonic}. Thus,
  \begin{align*}
    \skp{f}{g} &\leq c\,q\,\norm{\mathcal{M}^\sharp f}_q \norm{g}_{q'}.
  \end{align*}
  The claim follows by taking the supremum over
  all~$g \in L^{q'}(\Rn)$ with $\norm{g}_{p'} \leq 1$.
\end{proof}
To proceed, we  need the following lemma for improving reverse H{\"o}lder
estimates from \cite{DieKapSch12}. The lemma is a minor modification
of the \cite[Remark~6.12]{Giu03} and~\cite[Lemma~3.2]{DuzMin10}.
\begin{lemma}
  \label{lem:imprevH}
  Let $B \subset \Rn$ be a ball, let $g,h\,:\,\Omega \to
  \setR$ be a integrable functions and $\theta \in (0,1)$ such that
  \begin{align*}
    \dashint_B \abs{g}\,dx \leq c_0\, \bigg(\dashint_{2B}
    \abs{g}^{\theta} \,dx\bigg)^\frac{1}{\theta} + \dashint_{2B}
    \abs{h}\,dx.
  \end{align*}
  for all balls $B$ with $2B \subset \Omega$. Then for every $\gamma
  \in (0,1)$ there exists $c_1=c_1(c_0, \gamma)$ such that
  \begin{align*}
    \dashint_B \abs{g}\,dx \leq c_1\, \bigg( \dashint_{2B}
    \abs{g}^{\gamma} \,dx \bigg)^{\frac 1 \gamma} + c_1\,
    \dashint_{2B} \abs{h}\,dx.
  \end{align*}
\end{lemma}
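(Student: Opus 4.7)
The lemma is a self-improvement of a reverse-type inequality (lowering the right-hand side exponent from $\theta$ to any $\gamma\in(0,1)$); see Giusti's \emph{Direct Methods in the Calculus of Variations}, Lemma~6.1 and Remark~6.12, or Duzaar--Mingione~\cite{DuzMin10}, Lemma~3.2. The plan is to combine three classical ingredients: log-convex interpolation of $L^p$-norms, Young's inequality with a small parameter, and a covering/hole-filling iteration on concentric balls.

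First I would reduce to the case $\gamma<\theta$; for $\gamma\geq\theta$ the claim is immediate from Jensen's inequality. For any sub-ball $B'$ with $2B'\subset\Omega$, log-convexity of $L^p$-norms yields
\begin{align*}
 \left(\dashint_{2B'} |g|^\theta\,dx\right)^{1/\theta}
 &\leq \left(\dashint_{2B'} |g|\,dx\right)^{\lambda}
   \left(\dashint_{2B'} |g|^\gamma\,dx\right)^{(1-\lambda)/\gamma}
\end{align*}
with $\lambda=\frac{\theta-\gamma}{\theta(1-\gamma)}\in(0,1)$, and then Young's inequality with exponents $1/\lambda,1/(1-\lambda)$ and a free parameter $\varepsilon>0$ gives
\begin{align*}
  \left(\dashint_{2B'} |g|^\theta\,dx\right)^{1/\theta}
  &\leq \varepsilon \dashint_{2B'} |g|\,dx
  + c_\varepsilon \left(\dashint_{2B'} |g|^\gamma\,dx\right)^{1/\gamma}.
\end{align*}
Inserting this into the hypothesis produces the pointwise-in-$B'$ bound
\begin{align*}
  \dashint_{B'}|g|\,dx
  &\leq c_0\varepsilon\dashint_{2B'}|g|\,dx
    + c_0 c_\varepsilon \left(\dashint_{2B'}|g|^\gamma\,dx\right)^{1/\gamma}
    + \dashint_{2B'}|h|\,dx.
\end{align*}

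Next, fix the target ball $B_R=B_R(x_0)$ with $B_{2R}\subset\Omega$ and consider radii $R\leq\rho<r\leq 2R$. Cover $B_\rho(x_0)$ by balls $\{B_\sigma(y_i)\}$ of radius $\sigma=(r-\rho)/4$ with centers $y_i\in B_\rho(x_0)$ and finite overlap bounded by some $N_0=N_0(n)$, so that in particular $B_{2\sigma}(y_i)\subset B_r(x_0)$. Apply the previous inequality at each $B_\sigma(y_i)$, multiply by $|B_\sigma|$ to convert averages to integrals, and sum over $i$. The crucial point is that on the first term the finite overlap gives $\sum_i\int_{B_{2\sigma}(y_i)}|g|\,dx\leq N_0\int_{B_r}|g|\,dx$, so after dividing by $|B_\rho|$ and using $\rho\geq R$, $r\leq 2R$, the coefficient of $\dashint_{B_r}|g|\,dx$ is bounded by $c(n)c_0\varepsilon$ independently of $\rho,r$. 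The other two sums are estimated by the number of balls $N\lesssim(\rho/\sigma)^n$ and produce factors of $(r-\rho)^{-n/\gamma}$ and $(r-\rho)^{-n}$, respectively, against $\bigl(\dashint_{B_{2R}}|g|^\gamma\bigr)^{1/\gamma}$ and $\dashint_{B_{2R}}|h|\,dx$. Setting $\phi(\rho):=\dashint_{B_\rho}|g|\,dx$, this yields
\begin{align*}
  \phi(\rho) &\leq c(n)\,c_0\,\varepsilon\,\phi(r)
  + \frac{A}{(r-\rho)^{n/\gamma}} + \frac{B}{(r-\rho)^n}
  \qquad \text{for all } R\leq\rho<r\leq 2R,
\end{align*}
where $A,B$ depend only on $n,\gamma,c_0,\varepsilon$ and on the two data terms $\bigl(\dashint_{B_{2R}}|g|^\gamma\bigr)^{1/\gamma}$ and $\dashint_{B_{2R}}|h|\,dx$.

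Finally I would fix $\varepsilon$ so small that $c(n)c_0\varepsilon\leq 1/2$ and invoke the standard hole-filling iteration (Giusti, Lemma~6.1): if $\phi\geq 0$ is bounded on $[R,2R]$ and satisfies the displayed inequality with $\vartheta=1/2<1$, then $\phi(R)\leq c_1\bigl(A R^{-n/\gamma}+B R^{-n}\bigr)$, which is precisely the desired estimate. The main obstacle is the second step: one must ensure that, after covering, the coefficient in front of $\phi(r)$ is a \emph{universal} constant times $\varepsilon$ (independent of $r,\rho$), so that $\varepsilon$ can absorb it. Using finite overlap rather than naively summing over balls is what prevents an unwanted factor of $(r/\sigma)^n$ from spoiling the absorption; once this is in place, the hole-filling lemma makes the iteration terminate in one step and completes the proof.
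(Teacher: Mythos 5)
Your argument is correct and is precisely the standard proof from the references the paper cites for this lemma (Giusti, Remark~6.12, and Duzaar--Mingione, Lemma~3.2): interpolation plus Young's inequality to lower the exponent, a covering with bounded overlap to control the absorption coefficient, and the hole-filling iteration lemma on concentric balls. The paper gives no independent proof, so your proposal matches its (cited) approach.
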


We are now prepared to prove the estimate of our main result under the
assumption that the function~$u$ is already regular enough. We get rid
of this extra assumption later.
\begin{proposition}
  \label{pro:main-pre}
  Let $u$ be a local weak solution of~\eqref{eq:sysM}, let~$\bbM$
  satisfy~\eqref{eq:ass-M}, define~$\omega$ by~\eqref{eq:def-omega}.
  Then there exists $\kappa_6 = \kappa_6(p,n, \Lambda)$ such that for
  all balls~$B_0$ with $8B_0 \subset \Omega$ and all
  $\rho \in [p,\infty)$ with
  \begin{align}
    \label{eq:smallness-pre}
    \abs{\log \bbM}_{\setBMO(8B_0)} \leq \kappa_6 \frac{1}{\rho}
  \end{align}
  and $\abs{\nabla u} \omega \in L^\rho(B_0)$ there holds
  \begin{align*}
    \bigg(\dashint_{\frac 12 B_0} \big( \abs{\nabla u}\, \omega
    \big)^\rho\,dx\bigg)^{\frac 1 \rho} &\leq c_\rho \dashint_{4 B_0} 
                                   \abs{\nabla u}\, \omega \,dx + c_\rho \bigg( \dashint_{4 B_0}
                                   \big( \abs{G}\, \omega\big)^\rho\,dx \bigg)^{\frac 1 \rho}
  \end{align*}
  for all balls~$B_0$ with $8B_0 \subset \Omega$, where $c_\rho =
  c_\rho(p,n,\Lambda,\rho)$. The constant $c_\rho$ is continuous in~$\rho$.
\end{proposition}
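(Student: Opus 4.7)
The plan is to take $L^q$ norms in the pointwise sharp-maximal estimate of Proposition~\ref{pro:maximal-functions} with the specific choice $q := 2\rho/p \geq 2$, because then
\begin{align*}
  \norm{\mathcal{V}(\cdot,\nabla u)}_{L^q(\frac 12 B_0)}^q
  &\eqsim \int_{\frac 12 B_0} \big(\abs{\nabla u}\, \omega\big)^\rho\,dx,
\end{align*}
so that any bound on $\norm{\mathcal{V}(\cdot,\nabla z)}_{L^q}$ translates directly into the left-hand side of the proposition once we notice that $\zeta=1$ on $\frac 12 B_0$, i.e.\ $\nabla z = \nabla u$ there. The a priori assumption $\abs{\nabla u}\,\omega\in L^\rho(B_0)$ guarantees that $\mathcal{V}(\cdot,\nabla z)\in L^q(\setR^n)$, so that Theorem~\ref{thm:fefferman-stein} is applicable.

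Next, I would combine Theorem~\ref{thm:fefferman-stein} with the trivial Jensen bound $M^\sharp_1 f \leq M^\sharp_2 f$ to obtain $\norm{\mathcal{V}(\cdot,\nabla z)}_{L^q}\lesssim q\,\norm{M^\sharp_2 \mathcal{V}(\cdot,\nabla z)}_{L^q}$, and then insert the pointwise bound of Proposition~\ref{pro:maximal-functions}. The key leading term on the right is
\begin{align*}
  c\,q\,\big(\abs{\log \bbM}_{\setBMO(2B_0)}+\delta\big)\,\norm{M_2\mathcal{V}(\cdot,\nabla z)}_{L^q}
  \leq C\,q\,\big(\abs{\log \bbM}_{\setBMO(2B_0)}+\delta\big)\,\norm{\mathcal{V}(\cdot,\nabla z)}_{L^q},
\end{align*}
where I used that $M_2 f = M(\abs{f}^2)^{1/2}$ is bounded on $L^q$ whenever $q\geq 2$ (the boundary case $\rho=p$, i.e.\ $q=2$, is already in Corollary~\ref{cor:small-high-int}). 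Now I first fix $\delta$ small (but depending only on $\rho$) and then invoke the smallness hypothesis $\abs{\log \bbM}_{\setBMO(8B_0)}\le \kappa_6/\rho$, so that the product $C q(\abs{\log\bbM}_{\setBMO}+\delta)\leq \tfrac 12$ and this term is absorbed on the left. This is exactly where the $1/\rho$ scaling in the hypothesis is used, and it is the main subtle point of the argument.

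The remaining forcing terms from Proposition~\ref{pro:maximal-functions}---namely the $M_{2s}$ terms involving $\indicator_{4B_0}\abs{u-\mean{u}_{2B_0}}^p \omega^p/R^p$ and $\indicator_{B_0}\mathcal{V}(\cdot,G)$, plus the non-local tail of $\mathcal{V}(\cdot,\nabla z)$ over $B_0$---are all straightforward to control in $L^q$ using the $L^{q/(2s)}$-boundedness of $M$ (ensured by taking $s$ from Corollary~\ref{cor:small-high-int} close enough to $1$ so that $q/(2s)>1$). Translating back and using that $\support z \subset B_0$ and $\nabla z = \nabla u$ on $\frac 12 B_0$, I arrive at an inequality of reverse Hölder type
\begin{align*}
  \bigg(\dashint_{\frac 12 B_0}\big(\abs{\nabla u}\, \omega\big)^\rho\,dx\bigg)^{\frac 1 \rho}
  &\leq c_\rho\bigg(\dashint_{4 B_0}\big(\abs{\nabla u}\, \omega\big)^{\theta \rho}\,dx\bigg)^{\frac{1}{\theta \rho}}
  + c_\rho\bigg(\dashint_{4 B_0}\big(\abs{G}\, \omega\big)^\rho\,dx\bigg)^{\frac 1 \rho}
\end{align*}
for some $\theta\in(0,1)$; here I also use the weighted Poincaré inequality of Proposition~\ref{pro:DreDur} (justified by Proposition~\ref{pro:small-scalar}~\ref{itm:Ap2}) to bound $\dashint_{4B_0}\bigabs{(u-\mean{u}_{2B_0})/R}^\rho\omega^\rho\,dx$ in terms of a lower-exponent norm of $\abs{\nabla u}\omega$.

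Finally, applying the self-improvement Lemma~\ref{lem:imprevH} (with $\gamma$ chosen so that $\gamma\rho = 1$) produces the claimed estimate with $\dashint_{4B_0}\abs{\nabla u}\omega\,dx$ on the right-hand side, thereby completing the proof. The hardest part, as noted, is the quantitative absorption step: both the Fefferman--Stein constant and the Hardy--Littlewood constant on $L^{q/2}$ grow with $q \eqsim \rho$, and the smallness assumption $\abs{\log \bbM}_{\setBMO(8B_0)}\leq \kappa_6/\rho$ is precisely tailored to defeat this linear growth; if one instead had $\abs{\log \bbM}_{\setBMO}$ small independently of $\rho$, one would only recover a bounded range of exponents.
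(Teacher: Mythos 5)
Your proposal is correct and follows essentially the same route as the paper: take $L^{2\rho/p}$-norms in the pointwise estimate of Proposition~\ref{pro:maximal-functions}, use the Fefferman--Stein inequality of Theorem~\ref{thm:fefferman-stein} with its linear-in-$q$ constant to absorb the leading term thanks to the $1/\rho$-smallness of $\abs{\log\bbM}_{\setBMO}$, and conclude via the weighted Poincar\'e inequality and the self-improvement Lemma~\ref{lem:imprevH}. The one point to state more carefully is the range of exponents near $\rho=p$: the operator norms of $\mathcal M_2$ and $\mathcal M_{2s}$ blow up as $2\rho/p$ approaches $2s$, so (as the paper does) the whole range $\rho/p\le s$ from Gehring's lemma, not just the endpoint $\rho=p$, should be delegated to Corollary~\ref{cor:small-high-int}, with $s$ then fixed so that $s^2<\rho/p$ in the remaining cases.
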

\begin{proof}
  Define~$z$ as in the previous section and let~$\kappa_5$ as in
  Proposition~\ref{pro:maximal-functions}. We will
  choose~$\kappa_6 \leq \kappa_5 / p$.  Let $q := \rho/ p \geq 1$. If
  $1 \leq q \leq s$, then the claim already follows from
  Corollary~\ref{cor:small-high-int}. Thus, it suffices to consider
  the case~$q \geq s$. By replacing~$s$ by a smaller one in the steps
  above, we can even assume that $1 < s < s^2 < q$. The only reason
  for this assumption is to avoid exploding constants for~$q$ close
  to~$1$.

  It follows from Proposition~\ref{pro:maximal-functions} 
  \begin{align*}
    \textrm{I} :=
    \norm{\mathcal{M}_2^\sharp\mathcal{V}(\cdot,\nabla z)}_{2q}
    &\leq  c\, \big(\abs{\log \bbM}_{\setBMO(2B_0)} +
      \delta \big) \norm{M_2(\mathcal{V}(\cdot,\nabla
      z)}_{2q}
    \\
    &\quad + c\,
      \delta^{1-p} R^{-p} \bignorm{ \mathcal M_{2s} (\indicator_{4B_0} \abs{u-
      \mean{u}_{4B_0}}^p \omega^p\big)}_q^{\frac 12}
    \\
    &\quad  +c\, \delta^{1-p} \bignorm{\mathcal M_{2s}\big(\indicator_{4B_0}
      \mathcal{V}(\cdot,G)\big)}_2
    \\
    &\quad + c\,
      \biggnorm{\frac{R^n}{(R +\abs{x})^n}}_{2q}
      \bigg( \dashint_{B_0}\abs{\mathcal{V}(\cdot,\nabla z)-
      \mean{\mathcal{V}(\cdot,\nabla z)}_{B_0}}^2\,dx
      \bigg)^{\frac 12}
    \\
    &=: \textrm{I}_1 + \textrm{I}_2 + \textrm{I}_3 + \textrm{I}_4.
  \end{align*}
  Since $\abs{\nabla u}^p \omega^p \in L^q(B_0)$, we have
  $\mathcal{V}(\cdot,\nabla z)\in L^{2q}(\Rn)$. As a
  consequence~$\textrm{I} < \infty$.

  Now, by~\eqref{eq:bnd-M} (using
  $\mathcal{M}_2(g) = (\mathcal{M}(\abs{g}^2))^{\frac 12})$ and $s^2 <
  q$) and
  Theorem~\ref{thm:fefferman-stein} we obtain
  \begin{align*}
    \norm{\mathcal{M}_2(\mathcal{V}(\cdot,\nabla
    z)}_{2q} \leq c_s\, q\, 
    \norm{\mathcal{V}(\cdot,\nabla
    z)}_{2q} \leq c_s\, 
    \norm{\mathcal{M}_1^\sharp(\mathcal{V}(\cdot,\nabla
    z))}_{2q} \leq c_s\, 
    \norm{\mathcal{M}_2^\sharp(\mathcal{V}(\cdot,\nabla
    z))}_{2q}.
  \end{align*}
  We obtain
  \begin{align*}
    \textrm{I}_1 &\leq c\, q\,\big(\abs{\log \bbM}_{\setBMO(2B_0)} +
      \delta \big) \, \textrm{I}.
  \end{align*}
  Since $\abs{\nabla u}^p \omega^p \in L^q(B_0)$, we have
  $\abs{\mathcal{V}(\cdot,\nabla z)}^2\in L^q(\Rn)$.

  Now, we can fix~$\kappa_6$ and $\delta$ (choose $\delta \in O(1/q)$)
  so small such that
  \begin{align*}
    \textrm{I}_1 &\leq \tfrac 12 \textrm{I}.
  \end{align*}
  Thus, we can absorb~$\textrm{I}_1$ into~$\textrm{I}$.
  On the other hand we get
  \begin{align*}
    \textrm{I}_2 &\leq c\, q^{p-1}  \bigg( \int_{4 B_0} \bigg(\frac{ \abs{u -
                   \mean{u}_{4B_0}}^p}{R^p} \omega^p \bigg)^q \,dx
                   \bigg)^{\frac 1 {2q}}. 
  \end{align*}
  With $\abs{\mathcal{V}(\cdot, G)}^2 \leq c\, \abs{G}^p \omega^p$ we get
  and
  \begin{align*}
    \textrm{I}_3 &\leq c\, q^{p-1}
                   \bigg( \int_{4 B_0} \big( \abs{\mathcal{V}(\cdot,G)}^2 \omega^p\big)^q\,dx \bigg)^{\frac 1 {2q}}.
  \end{align*}
  Finally,
  \begin{align*}
    \textrm{I}_4 &\leq c\, \abs{B_0}^{\frac 1{2q}}
                         \bigg( \dashint_{B_0}\abs{\mathcal{V}(\cdot,\nabla z)}^2\,dx
      \bigg)^{\frac 12}
  \end{align*}
  We also use
  \begin{align*}
    \textrm{I} &=
                 \norm{\mathcal{M}_2^\sharp\mathcal{V}(\cdot,\nabla z)}_{2q}
                 \geq c\,    \norm{\mathcal{M}_1^\sharp\mathcal{V}(\cdot,\nabla z)}_{2q}
                 \geq \frac{c}{q}   \norm{\mathcal{V}(\cdot,\nabla z)}_{2q}.
  \end{align*}
  Overall, we obtain
  \begin{align*}
    \norm{\mathcal{V}(\cdot,\nabla z)}_{2q}
    &\leq
      c\, q^{p}  \bigg( \int_{4 B_0} \bigg( \frac{ \abs{u -
      \mean{u}_{4B_0}}^p}{R^p} \omega^p \bigg)^q \,dx \bigg)^{\frac
      1 {2q}}
    \\
    &\quad +
      c\, q^{p}
      \bigg( \int_{4 B_0} \big(
      \abs{\mathcal{V}(\cdot,G)}^2 \omega^p\big)^q\,dx \bigg)^{\frac 1
      {2q}}
    \\
    &\quad +
      c\, q\,\abs{B_0}^{\frac 1{2q}}
      \bigg( \dashint_{B_0}\abs{\mathcal{V}(\cdot,\nabla z)
      }^2\,dx
      \bigg)^{\frac 12}
  \end{align*}
  This implies
  \begin{align*}
    \bigg(\dashint_{B_0}\big( \abs{\nabla z}^p \omega^p \big)^q\,dx\bigg)^{\frac 1q}
    &\leq
      c\, q^{p}  \bigg( \dashint_{4 B_0} \bigg( \frac{ \abs{u -
      \mean{u}_{4B_0}}^p}{R^p} \omega^p \bigg)^q \,dx \bigg)^{\frac
      1 {q}}
    \\
    &\quad +
      c\, q^{p}
      \bigg( \dashint_{4 B_0} \big(
      \abs{G}^p \omega^p\big)^q\,dx \bigg)^{\frac 1
      {q}}
      +
      c\, q\,
      \dashint_{B_0}\abs{\mathcal{V}(\cdot,\nabla z)}^2\,dx.
  \end{align*}
  This proves the claim.
  Using the definition of~$z$ from~\eqref{eq:def-z} and
  \eqref{eq:def-g} we can translate this back to an estimate in terms
  of~$\nabla u$.
  \begin{align}
    \label{eq:main-pre-2}
    \begin{aligned}
      \bigg(\dashint_{\frac 12 B_0}\big( \abs{\nabla u}^p \omega^p
      \big)^q\,dx\bigg)^{\frac 1q} &\leq c\, q^{p} \bigg( \dashint_{4
        B_0} \bigg( \frac{ \abs{u - \mean{u}_{4B_0}}^p}{R^p} \omega^p
      \bigg)^q \,dx \bigg)^{\frac 1 q}
      \\
      &\quad + c\, q^{p} \bigg( \dashint_{4 B_0} \big( \abs{G}^p
      \omega^p\big)^q\,dx \bigg)^{\frac 1 q} + c\, q\,
      \dashint_{B_0}\abs{\nabla u}^p \omega^p\,dx
    \end{aligned}
  \end{align}
  Then the smallness assumption~\eqref{eq:smallness} on $\log \bbM$
  together with Proposition~\ref{pro:small-scalar} allows to apply the
  \Poincare{} type lemma of Proposition~\ref{pro:DreDur}. We obtain
  for some~$\theta \in (0,1)$
  \begin{align*}
    \begin{aligned}
      \bigg(\dashint_{\frac 12 B_0}\big( \abs{\nabla u}^p \omega^p
      \big)^q\,dx\bigg)^{\frac 1q} &\leq c_{q} \bigg( \dashint_{4 B_0}
      \big( \abs{\nabla u}^p \omega^p \big)^{\theta q} \,dx
      \bigg)^{\frac 1 {\theta q}} + c_{q} \bigg( \dashint_{4 B_0} \big(
      \abs{G}^p \omega^p\big)^q\,dx \bigg)^{\frac 1 q}.
    \end{aligned}
  \end{align*}
  We obtained a reverse H\"older's estimate for $(\abs{\nabla u}^p
  \omega^p)^q$. Now, Lemma~\ref{lem:imprevH} allows to reduce the
  exponent~$\theta q$ to $\frac 1p$. In particular, we get
  \begin{align}
    \label{eq:main-pre-3}
    \begin{aligned}
      \bigg(\dashint_{\frac 12 B_0} \big( \abs{\nabla u}^p \omega^p
      \big)^q\,dx\bigg)^{\frac 1q} &\leq c_{q} \bigg(\dashint_{4 B_0} 
      \abs{\nabla u} \omega \,dx\bigg)^p + c_{q} \bigg( \dashint_{4 B_0}
      \big( \abs{G}^p \omega^p\big)^q\,dx \bigg)^{\frac 1 q}.
    \end{aligned}
  \end{align}
  This proves the claim.
\end{proof}
We are now prepared to prove our main theorem,
\begin{proof}[Proof of main Theorem~\ref{thm:main}.]
  Propositon~\ref{pro:main-pre} agrees in most parts with our main
  theorem.  First, the Proposition~\ref{pro:main-pre} is stated with
  higher integrablity on $\frac 12 B_0$, right-hand side on~$4 B_0$
  and smallness on~$8 B_0$. A simple covering argument shows that we
  can replace this by higher integrbility on $B_0$, right-hand side on
  $2B_0$ and smallness on~$4B_0$.

  Second, we require in Proposition~\ref{pro:main-pre} the a~priori
  knowledge, that $\abs{\nabla u}^p \omega^p$ is already locally
  in~$L^q$. This artificial assumption can be overcome for example by
  an approximation argument. This way was for example used
  in~\cite{KinZho99} and~\cite{BCGOP17}, see also~\cite{CGP19}.   Due to our precise sharp
  estimates we are able to circumvent this argument and argue
  directly. Indeed, it follows by an iteration argument
  that~$\abs{\nabla u}^p \omega^p \in L^q(B_0)$. For this, let
  $q_1 \in [1, q_0]$ be such that
  $\abs{\nabla u}^p \omega^p \in L^{q_1}(B_0)$. Then
  Proposition~\ref{pro:main-pre} ensures that we have a reverse
  H\"older's estimate for $(\abs{\nabla u}^p \omega^p)^{q_1}$. The
  constants of this estimate only depend on~$q_0$ and are independent
  of~$q_1$. Therefore, we can apply Gehring's lemma
  (e.g. \cite[Theorem~6.6]{Giu03}) to deduce
  $\abs{\nabla u}^p \omega^p \in L^{s_1 q_1}$ with $s_1>1$ only
  depending on~$q$. Repeating this argument we see that
  $\abs{\nabla u}^p \omega^p \in L^{q_0}$ and
  Proposition~\ref{pro:main-pre} can be applied. Our main Theorem
  \ref{thm:main} follows.
\end{proof}

\subsection{Main Result Linear}

In this subsection we give the proof of the main
Theorem~\ref{thm:main-linear} for the linear setting.
\begin{proof}[Proof of main Theorem~\ref{thm:main-linear}.]
  The case $\rho \geq 2$ just follows from Theorem~\ref{thm:main} with
  $p=2$, so it remains to prove the case $1 < \rho < 2$. We will
  deduce this from the case $\rho > 2$ by means of a local duality
  argument.

  Recall that
  \begin{align*}
    -\divergence (\bbA(x) \nabla u)
    &=
      -\divergence ( \bbA(x) G)
  \end{align*}
  and that $B_0$ be a ball with radius~$R$ and $4B_0 \subseteq \Omega$.
  Let $H \in L^{\rho'}_\omega(2B_0)$ with
  \begin{align}
    \label{eq:H-constraint}
    \bigg( \dashint_{2B_0} (\abs{H}\,\omega)^{\rho'}\,dx \bigg)^{\frac
    1{\rho'}} &\leq 1.
  \end{align}
  Now, let $z$ solve the dual equation
  \begin{align}
    \label{eq:dualA}
    \begin{alignedat}{2}
      -\divergence(\bbA(x) \nabla z)&=-\divergence
      (\bbA(x)\indicator_{2B_0} H) &\quad & \text{on } 4{B_0},
      \\
      z&=0 &\quad& \text{on }\partial(4{B_0}).
    \end{alignedat}
  \end{align}
  We want to control~$\abs{\nabla z} \omega$ in terms of~$H$. For this
  we can  use the  super-quadratic case that we have already proven. In
  particular, by Theorem~\ref{thm:main-linear} applied to the
  exponent~$\rho' \geq 2$ we have
  \begin{align*}
    \bigg(\dashint_{2B_0} \big( \abs{\nabla z}\, \omega
    \big)^{\rho'}\,dx\bigg)^{\frac 1{\rho'}}
    &\leq c \dashint_{4 B_0} 
                                   \abs{\nabla z}\, \omega
                                         \,dx + c\, \bigg( \dashint_{2 B_0}
                                   \big( \abs{H}\, \omega\big)^{\rho'}\,dx
      \bigg)^{\frac 1 {\rho'}}
    \\
    &\leq c \bigg(\dashint_{4 B_0} 
                                   (\abs{\nabla z}\, \omega)^2
                                         \,dx \bigg)^{\frac 12} + c\, \bigg( \dashint_{2 B_0}
                                   \big( \abs{H}\, \omega\big)^{\rho'}\,dx
                                   \bigg)^{\frac 1 {\rho'}}. 
  \end{align*}
  Using the test function~$z$ in~\eqref{eq:dualA} we immediately see that
  \begin{align*}
    \bigg(\dashint_{4 B_0} 
    (\abs{\nabla z}\, \omega)^2
    \,dx \bigg)^{\frac 12} &\leq c\, \bigg( \dashint_{2 B_0}
                             \big( \abs{H}\, \omega\big)^2\,dx
                             \bigg)^{\frac 1 2} \leq c\, \bigg( \dashint_{2 B_0}
                             \big( \abs{H}\, \omega\big)^{\rho'}\,dx
                             \bigg)^{\frac 1 {\rho'}}.
  \end{align*}
  This and the previous estimate imply
  \begin{align}
    \label{eq:est-dual-z}
    \bigg(\dashint_{2B_0} \big( \abs{\nabla z}\, \omega
    \big)^{\rho'}\,dx\bigg)^{\frac 1{\rho'}}
    &\leq c\, \bigg( \dashint_{2 B_0}
                             \big( \abs{H}\, \omega\big)^{\rho'}\,dx
      \bigg)^{\frac 1 {\rho'}} \leq c.
  \end{align}
  We choose a cut-off function $\eta \in C^\infty_0(2B_0)$ with
  $\indicator_{B_0} \leq \eta \leq \indicator_{2B_0}$ and
  $\norm{\nabla \eta}_\infty \leq c R^{-1}$. Using the equation
  for~$z$ we calculate
  \begin{align*}
    \lefteqn{\textrm{I} := \dashint_{2B_0}
    \bbA(x) \nabla (\eta^2(u-u_0)) \cdot  H
    \, dx} \qquad
    \\
    &=
      \dashint_{2B_0}
      \bbA(x) \nabla (\eta^2(u-u_0)) \cdot  \nabla z
      \, dx
    \\
    % &=  
    %   \dashint_{2B_0} \eta^2 
    %   \bbA(x) \nabla u \cdot  \nabla z
    %   \, dx +
    %   \dashint_{2B_0}
    %   \bbA(x) \nabla (\eta^2) (u-u_0) \cdot  \nabla z
    %   \, dx 
    % \\
    &=        \dashint_{2B_0} 
      \bbA(x) \nabla u \cdot  \nabla (\eta^2 (z-z_0))
      \, dx +
      \dashint_{2B_0}
      \bbA(x) \nabla (\eta^2) (u-u_0) \cdot  \nabla z
      \, dx
    \\
    &\quad 
      -
      \dashint_{2B_0} 
      \bbA(x) \nabla u \cdot  \nabla (\eta^2) (z-z_0)
      \, dx
    \\
    &=: \textrm{I}_1 + \textrm{I}_2 + \textrm{I}_3.
  \end{align*}
  Using the equation for~$u$ we get 
  \begin{align*}
    \abs{I_1}
    &=  \biggabs{\dashint_{2B_0} \bbA(x) G \cdot \nabla (\eta^2
      (z-z_0))\,dx}
    \\           
    &\leq  \dashint_{2B_0} \omega^2 \abs{G} \bigabs{\nabla (\eta^2
      (z-z_0))}\,dx
    \\           
    & \leq \bigg(\dashint_{2B_0} (\omega \abs{G})^\rho \,dx \bigg)^{\frac{1}{\rho}}
      \bigg(\dashint_{2B_0} (\omega \abs{\nabla (\eta^2
      (z-z_0))})^{\rho'} \,dx \bigg)^{\frac{1}{\rho'}} .
  \end{align*}
  Using triangle inequality and  the weighted \Poincare{}'s inequality
  of Proposition~\ref{pro:DreDur}
  \begin{align*}
    \abs{\textrm{I}_1}
    & \leq \bigg(\dashint_{2B_0} (\omega \abs{G})^\rho \,dx \bigg)^{\frac{1}{\rho}}
      \bigg(\dashint_{2B_0} (\omega \abs{\nabla z})^{\rho'} \,dx \bigg)^{\frac{1}{\rho'}}.
  \end{align*}
Next
  \begin{align*}
    \abs{\textrm{I}_2}
    & \leq \bigg(\dashint_{2B_0} \bigg(\omega \frac{\abs{u-u_0}}{R} \bigg)^{ \rho}
      \,dx \bigg)^{\frac{1}{\rho}}
      \bigg(\dashint_{2B_0} (\omega \abs{\nabla z})^{\rho'} \,dx
      \bigg)^{\frac{1}{\rho'}} . 
  \end{align*}
  Thus, by  \Poincare{}'s inequality of Proposition~\ref{pro:DreDur}
  \begin{align*}
    \abs{\textrm{I}_2}
    & \leq \bigg(\dashint_{2B_0} \big(\omega \abs{\nabla
      u}\big)^{\theta \rho}
      \,dx \bigg)^{\frac{1}{\theta \rho}}
      \bigg(\dashint_{2B_0} (\omega \abs{\nabla z})^{\rho'} \,dx
      \bigg)^{\frac{1}{\rho'}}
  \end{align*}
  for some $\theta \in (\frac{1}{\rho},1)$. Moreover, for some
  $\theta_2 \in (0,1)$ close to one we have
  \begin{align*}
    \abs{\textrm{I}_3}
    & \lesssim
      \bigg(\dashint_{2B_0} (\omega \abs{\nabla u})^{\theta_2\rho} \,dx
      \bigg)^{\frac{1}{\theta_2 \rho}} \bigg(\dashint_{2B_0}
      \bigg(\omega \frac{\abs{z-z_0}}{R} \bigg)^{ (\theta_2 \rho)'}
      \,dx \bigg)^{\frac{1}{(\theta_2 \rho)'}}.
  \end{align*}
  Again, by \Poincare{}'s inequality of Proposition~\ref{pro:DreDur}
  with $\theta_2$ close to one we get
  \begin{align*}
    \abs{\textrm{I}_3}
    & \lesssim
      \bigg(\dashint_{2B_0} (\omega \abs{\nabla u})^{\theta_2\rho} \,dx
      \bigg)^{\frac{1}{\theta_2 \rho}} \bigg(\dashint_{2B_0}
      \big(\omega \abs{\nabla z} \big)^{ \rho'}
      \,dx \bigg)^{\frac{1}{\rho'}}.
  \end{align*}
  It is possible to choose~$\theta = \theta_2$ in the above steps. We
  finally obtained
  \begin{align*}
    \abs{\textrm{I}} &\lesssim \Bigg[ \bigg(\dashint_{2B_0} (\omega \abs{\nabla u})^{\theta\rho} \,dx
                       \bigg)^{\frac{1}{\theta \rho}} + \bigg(\dashint_{2B_0} (\omega
                       \abs{G})^\rho \,dx \bigg)^{\frac{1}{\rho}}
                       \Bigg] \bigg(\dashint_{2B_0}
                       \big(\omega \abs{\nabla z} \big)^{ \rho'}
                       \,dx \bigg)^{\frac{1}{\rho'}}.
  \end{align*}
  With~\eqref{eq:est-dual-z}  we get
  \begin{align*}
    \abs{\textrm{I}} &\lesssim \Bigg[ \bigg(\dashint_{2B_0} (\omega \abs{\nabla u})^{\theta\rho} \,dx
                       \bigg)^{\frac{1}{\theta \rho}} + \bigg(\dashint_{2B_0} (\omega
                       \abs{G})^\rho \,dx \bigg)^{\frac{1}{\rho}}
                       \Bigg] \bigg(\dashint_{2B_0}
                       \big(\omega \abs{\nabla z} \big)^{ \rho'}
                       \,dx \bigg)^{\frac{1}{\rho'}}
    \\
                     &\lesssim  \bigg(\dashint_{2B_0} (\omega \abs{\nabla u})^{\theta\rho} \,dx
                       \bigg)^{\frac{1}{\theta \rho}} + \bigg(\dashint_{2B_0} (\omega
                       \abs{G})^\rho \,dx \bigg)^{\frac{1}{\rho}}.
  \end{align*}
  Since $H$ was arbitrary satisfying~\eqref{eq:H-constraint} and
  $(L^{\rho'}_\omega)^* = L^\rho_{\omega^{-1}}$, it follows that
  \begin{align*}
    \bigg(\dashint_{2B_0} \abs{\bbA \nabla (\eta^2(u-u_0))} \omega^{-1}\big)^{\rho}\,dx
    \bigg)^{\frac 1 \rho}
    &\lesssim  \bigg(\dashint_{2B_0} (\omega \abs{\nabla u})^{\theta\rho} \,dx
      \bigg)^{\frac{1}{\theta \rho}} + \bigg(\dashint_{2B_0} (\omega
      \abs{G})^\rho \,dx \bigg)^{\frac{1}{\rho}}.
  \end{align*}
  Using $\bbA \nabla (\eta^2 (u-u_0)) = \bbA \nabla u$ on $B_0$ and 
  $\abs{\bbA \nabla u} \eqsim \omega^2 \abs{\nabla u}$, we obtain
  \begin{align*}
    \bigg(\dashint_{B_0} \big(\omega \abs{\nabla u}\big)^{\rho}\,dx
    \bigg)^{\frac 1 \rho}
    &\lesssim  \bigg(\dashint_{2B_0} (\omega \abs{\nabla u})^{\theta\rho} \,dx
      \bigg)^{\frac{1}{\theta \rho}} + \bigg(\dashint_{2B_0} (\omega
      \abs{G})^\rho \,dx \bigg)^{\frac{1}{\rho}}.
  \end{align*}
  Now, Lemma~\ref{lem:imprevH} allows to reduce the
  exponent~$\theta \rho$ to $1$. This proves
  Theorem~\ref{thm:main-linear} also in the sub-quadratic case~$p <2$.
\end{proof}

\section{\texorpdfstring{Sharpness of the $\log$-$\setBMO$ Condition}{Sharpness of the log-BMO Condition}}
\label{sec:counterexample}

In this section we show by means of examples that our $\log$-$\setBMO$
condition is sharp. In particular, we show in Example~\ref{exa:meyers}
that that the condition on the exponent~$\rho$ of higher integrability
$\abs{\log \bbM}_{\setBMO(8B_0)} \leq \frac{\kappa}{\rho}$ in
Theorem~\ref{thm:main-linear} and Theorem~\ref{thm:main} is optimal.
We also present an example with a degenerate a weight which does not
belong to~$\setBMO$, but satisfies our smallness
condition~$\abs{\log \bbM}_{\setBMO} <\epsilon$ (see Example
\ref{exp:absx-eps}).

Our examples are formulated for the nice linear situation, i.e. $p=2$,
which shows that Theorem~\ref{thm:main} is even optimal in the linear
case, which corresponds to Theorem~\ref{thm:main-linear}
for~$\rho > 2$.

Before we start with our examples let us make a short remarks on the
logarithm of certain matrices. If $a \in (-1,1)$ and $x \in \Rn$, then
by Taylor expansion
\begin{align*}
  \log(\identity + a \hatxx) &= \sum_{k \geq 1} \frac{(-1)^{k+1}}{k}
                               (a \hatxx)^k
  \\
                             &= \Big(\sum_{k \geq 1} \frac{(-1)^{k+1}}{k} a^k\Big) \hatxx =
                               \log(1+a) \hatxx.
\end{align*}
It is possible to conclude from this that for all $a>-1$
\begin{align}
  \label{eq:log-idxx}
  \log(\identity + a \hatxx) &= \Big(\sum_{k \geq 1}
                               \frac{(-1)^{k+1}}{k} a^k\Big) \hatxx = \log(1+a) \hatxx.
\end{align}
We will also need that the matrix $\identity + a\,\hatxx$ has
eigenvalues $1+a$ with eigenvector~$\hat{x} := x/\abs{x}$ and
eigenvalue $1$ with
eigenspace $(\linearspan \set{\hat{x}})^\perp$. This implies that for
example that $\idhatxx$ has eigenvalues zero and one, so for the
spectral norm we have~$\abs{\idhatxx}=1$.

\begin{example}
  \label{exa:meyers}
  This example is a modification of the one of
  Meyers~\cite[Section~5]{Mey63}, who considered the case~$n=2$.  Let
  $B_1(0)$ denote the unit ball in~$\setR^n$. Let us define
  $u\,:\, B_1(0) \to \setR$ for $n \geq 2$ by
  \begin{align}
    \label{eq:meyer-u}
    u(x) &:= \abs{x}^{1-\epsilon} \hat{x}_1
  \end{align}
  with $\hat{x}_1 := x_1/\abs{x}$ and $\epsilon \in (0,\frac 12]$.  Then
  \begin{align*}
    \nabla u(x)
    &= \abs{x}^{-\epsilon} \big(e_1 -\epsilon \hat{x} \hat{x}_1\big),
  \end{align*}
  where $e_1=(1,0,\dots, 0)$.  Since $\epsilon \in (0,\frac 12]$ we
  have $u \in W^{1,2}(B_1(0))$. More precisely, we have
  $\nabla u \in L^{\frac{n}{\epsilon},\infty}(B_1(0))$ (Marcinkiewicz space),
  so $u \in W^{1,\rho}(B_1(0))$ for all $\rho < \frac{n}{\epsilon}$. Moreover,
  $u \notin W^{1,\rho}(B_1(0))$ for $\rho \geq \frac{n}{\epsilon}$.

  Let us define the symmetric
  matrices~$\bbM, \bbA\,:\, \setR^n \to \setR^{n \times n}$ by
  \begin{align}
    \label{eq:meyer-MA}
    \begin{aligned}
      \bbM(x) &:= \theta \identity + (1-\theta) \hatxx,
      \\
      \bbA(x) &:= \bbM^2(x) = \theta^2 \identity + (1-\theta^2) \hatxx
    \end{aligned}
  \end{align}
  with $\theta \in (0,1)$ to be chosen later.  The eigenvalues of
  $\bbM$ are~$1$ with eigenvector~$\hat{x}$ and~$\theta$ with
  multiplicity~$n-1$ and eigenspace
  $(\linearspan \set{\hat{x}})^\perp$. For~$\bbA$ the
  eigenvalue~$\theta$ changes to~$\theta^2$.  Thus,
  \begin{align}
    \label{eq:meyer-elliptic}
    \begin{aligned}
      \theta \identity \leq \bbM(x) \leq \identity,
      \\
      \theta^2 \identity \leq \bbA(x) \leq \identity.
    \end{aligned}
  \end{align}
  We calculate
  \begin{align*}
    \bbM(x)^2 \nabla u(x) = \bbA(x) \nabla u(x)
    &= \abs{x}^{-\epsilon} \Big(\theta^2 e_1 + (1-\epsilon-\theta^2) \hat{x} \hat{x}_1
      \Big)
    \\
    \intertext{and}
    -\divergence(\bbM^2 \nabla u) = 
    -\divergence(\bbA \nabla u)
    &= -\abs{x}^{-\epsilon-1} \big(( -\epsilon(1-\epsilon) + (1-\epsilon- \theta^2)(n-1)\big)  \hat{x}_1.
  \end{align*}
  To get $-\divergence(\bbA \nabla u)=0$ we need
  \begin{align}
    \label{eq:mu-theta}
    -\epsilon(1-\epsilon) + (1-\epsilon- \theta^2)(n-1) &=0.
  \end{align}
  For $n=2$ we can set $\theta := 1-\epsilon$. In general, we can define
  \begin{align}
    \label{eq:def-theta}
    \theta& := \sqrt{1-\epsilon-\frac{\epsilon(1-\epsilon)}{n-1}}
  \end{align}
  and obtain
  \begin{align*}
    -\divergence(\bbM^2 \nabla u) = 
    -\divergence(\bbA \nabla u) &=0.
  \end{align*}
  Since $\epsilon \in (0,\frac 12]$ and $n \geq 2$, we have
  $\theta \in [\frac 12,1)$. This implies
  with~\eqref{eq:meyer-elliptic} that
  \begin{align*}
    \abs{\bbM(x)} \abs{\bbM^{-1}(x)} &\leq \frac 1 \theta \leq 2 =: \Lambda.
  \end{align*}
  In particular, the condition number of~$\bbM(x)$ is bounded
  independently of the specific choice of~$\epsilon \in (\frac 12, 1]$.

  By~\eqref{eq:log-idxx} we calculate 
  \begin{align*}
    \log \bbM &= \log( \theta \identity + (1-\theta) \hatxx) =
                \log(\theta) \identity + \log\bigg(\identity +
                \frac{1-\theta}{\theta} \hatxx \bigg)
    \\
              &=
                \log(\theta) \identity + \log\bigg( 1+
                \frac{1-\theta}{\theta}\bigg) \hatxx
    \\
              &= \log(\theta) (\idhatxx).
  \end{align*}
  Thus,
  \begin{align*}
    \norm{\log \bbM}_{\setBMO} &\leq  \norm{\log \bbM}_\infty =
                                 \abs{\log(\theta)} 
                                 \abs{\idhatxx} = \abs{\log(\theta)}. 
  \end{align*}
  Thus, by~\eqref{eq:def-theta}
  \begin{align*}
    \abs{\log \theta}
    &= \Biggabs{\log \sqrt{1-\epsilon-\frac{\epsilon(1-\epsilon)}{n-1}
      }}
      \leq \tfrac 12 \abs{\log (1- 2 \epsilon)} \leq \epsilon.
  \end{align*}
  Overall, we have a function $u\,:\, B \to \setR$ and a positive
  matrix valued weights~$\bbA=\bbM^2$ with the following properties
  \begin{enumerate}
  \item The function $u \in W^{1,2}(B)$ solves
    \begin{align*}
      -\divergence(\bbM^2 \nabla u) = 
      -\divergence(\bbA \nabla u) &=0
    \end{align*}
  \item The condition number of the weight satisfies
    $\abs{\bbM(x)} \abs{\bbM^{-1}(x)} \leq 2$.
  \item The weight satisfies the smallness condition
    $\norm{\log \bbM}_\infty \leq \epsilon$.
  \item We have limited higher integrability of the gradients. More
    precisely, we have $u \in W^{1,\rho}(B)$ for all $\rho\,\epsilon < n$
    and $u \not\in W^{1,\rho}(B)$ for all $\rho\, \epsilon \geq n$.
  \end{enumerate}
  This shows that the smallness assumption
  $\abs{\log \bbM}_{\setBMO} \leq \kappa_0 \frac{1}{\rho}$ in our
  Theorems~\ref{thm:main-linear} and~\ref{thm:main} are optimal.
\end{example}
We will now present an example with a degenerate matrix-valued
weight~$\bbA$, which is not from~$\setBMO$ but satisfies our
logarithmic smallness assumption. Note, that it was already mentioned
in~\cite[Remark~2.12]{CaoMenPha18} that the condition
$\bbA\in \setBMO$ is not necessary.
\begin{example}
  \label{exp:absx-eps}
  We proceed similar to Example~\ref{exa:meyers}.  Let $B_1(0)$ denote
  the unit ball in~$\Rn$ with $n\geq 2$. For
  $\epsilon \in (0, \frac 12]$ define $u\,:\, B_1(0) \to \setR$ by
  \begin{align*}
    u(x) &:= \abs{x}^{1-\epsilon/2} \hat{x}_1.
  \end{align*}
  with $\hat{x}_1 := x_1/\abs{x}$. Moreover, let us define the
  matrix-valued
  weights~$\bbM, \bbA\,:\, \setR^n \to \setR^{n \times n}$ by
  \begin{align*}
    \bbM(x) &:= \abs{x}^{-\epsilon/2}(\theta \identity + (1-\theta) \hatxx),
    \\
    \bbA(x) &:= \bbM^2(x) =\abs{x}^{-\epsilon/2} (\theta^2 \identity +
              (1-\theta^2) \hatxx). 
  \end{align*}

  So compared to Example~\ref{exa:meyers} our function~$u$ has an
  additional factor~$\abs{x}^{\epsilon/2}$ and the weight~$\bbM$ has an
  additional factor~$\abs{x}^{-\epsilon/2}$. Similar calculations lead to
  \begin{align}
    \label{eq:exa2-u}
    \nabla u(x)
    &= \abs{x}^{-\epsilon/2} \Big(e_1 -\frac \epsilon2 \hat{x}
      \hat{x}_1\Big)
  \end{align}
  with $e_1=(1,0,\dots, 0)$ and
  \begin{align*}
    -\divergence(\bbM^2 \nabla u) = 
    -\divergence(\bbA \nabla u)
    &= -\abs{x}^{-\epsilon-1} \Big( -\frac \epsilon 2 (1-\epsilon) + \big(1- \frac
      \epsilon2- \theta^2 \big)(n-1)\Big)  \hat{x}_1.
  \end{align*}
  Thus, for 
  \begin{align}
    \label{eq:def-theta1}
    \theta& := \sqrt{1-\frac \epsilon 2 - \frac{\epsilon(1-\epsilon)}{2(n-1)}}
  \end{align}
  we obtain
  \begin{align*}
    -\divergence(\bbM^2 \nabla u) = 
    -\divergence(\bbA \nabla u) &= 0.
  \end{align*}
  Since $\delta>0$ and $\epsilon \in (0, \frac 12]$ we have $\theta \in [\frac 12,
  1)$

  Our weight satisfies
  \begin{align*}
    \abs{x}^{-\epsilon/2} \theta \identity \leq \bbM(x) \leq 
    \abs{x}^{-\epsilon/2} \identity.
  \end{align*}
  So, although the weight~$\bbA$ is singular, it has finite condition
  number
  \begin{align*}
    \abs{\bbM(x)} \abs{\bbM^{-1}(x)} &\leq \frac 1 \theta \leq 2 =:\Lambda.
  \end{align*}
  Similar to Example~\ref{exa:meyers} we conclude
  \begin{align*}
    \log \bbM
    &=- \frac \epsilon 2 (\log\abs{x}) \identity +\log( \theta) (\idhatxx).
  \end{align*}
  Thus,
  \begin{align*}
    \abs{\log \bbM}_{\setBMO} &\leq \frac \epsilon 2 \bigabs{\log \abs{x}
                           \identity}_{\setBMO} + \abs{\log(\theta)
                           (\idhatxx)}_{\setBMO}
                           \leq \epsilon + \abs{\log(\theta)}.
  \end{align*}
  We calculate
  \begin{align*}
    \abs{\log(\theta)}
    &=
      \tfrac 12 \biggabs{\log
      \bigg(
      1-\frac \epsilon 2 - \frac{\epsilon(1-\epsilon)}{2(n-1)}\bigg)}
      \leq \tfrac 12  \bigabs{\log(1-\epsilon)} \leq \frac \epsilon 2.
  \end{align*}
  Overall,
  \begin{align*}
    \abs{\log\bbM}_{\setBMO} &\leq \tfrac 32 \epsilon.
  \end{align*}
  This shows that the weight~$\bbM$ satisfies our $\log$-smallness
  condition and our Theorem~\ref{thm:main} can be applied. In
  particular, we get $\nabla u \in L^\rho(B)$ for all $q>p$ with
  $\rho \leq \frac{\kappa_0}{\epsilon}$.

  Due to~\eqref{eq:exa2-u} we have 
  $\nabla u \in L^{\frac{2n}{\epsilon},\infty}(B)$ (Marcinkiewicz space). More
  precisely,  $\nabla u \in L^2$ if and only if $\rho < \frac{2n}{\epsilon}$. So
  we have limited higher integrability in agreement with our
  Theorems~\ref{thm:main-linear} and~\ref{thm:main}.
\end{example}


\begin{thebibliography}{10}

\bibitem{BCGOP17}
{\sc Bais\'{o}n, A.~L., Clop, A., Giova, R., Orobitg, J., and Passarelli~di
  Napoli, A.}
\newblock Fractional differentiability for solutions of nonlinear elliptic
  equations.
\newblock {\em Potential Anal. 46}, 3 (2017), 403--430.

\bibitem{BalDieWei20}
{\sc Balci, A.~K., Diening, L., and Weimar, M.}
\newblock Higher order {C}alder\'{o}n-{Z}ygmund estimates for the
  {$p$}-{L}aplace equation.
\newblock {\em J. Differential Equations 268}, 2 (2020), 590--635.

\bibitem{Boy57}
{\sc Boyarski\u{\i}, B.~V.}
\newblock Generalized solutions of a system of differential equations of first
  order and of elliptic type with discontinuous coefficients.
\newblock {\em Mat. Sb. N.S. 43(85)\/} (1957), 451--503.

\bibitem{BreCiaDieKuuSch18}
{\sc Breit, D., Cianchi, A., Diening, L., Kuusi, T., and Schwarzacher, S.}
\newblock Pointwise {C}alder\'{o}n-{Z}ygmund gradient estimates for the
  {$p$}-{L}aplace system.
\newblock {\em J. Math. Pures Appl. (9) 114\/} (2018), 146--190.

\bibitem{BreCiaDieKuuSch18b}
{\sc Breit, D., Cianchi, A., Diening, L., Kuusi, T., and Schwarzacher, S.}
\newblock Pointwise {C}alder\'{o}n-{Z}ygmund gradient estimates for the
  {$p$}-{L}aplace system.
\newblock {\em J. Math. Pures Appl. (9) 114\/} (2018), 146--190.

\bibitem{CaoMenPha18}
{\sc Cao, D., Mengesha, T., and Phan, T.}
\newblock Weighted-${W}^{1,p}$ estimates for weak solutions of degenerate and
  singular elliptic equations.
\newblock {\em Indiana Univ. Math. J. 67\/} (2018), 2225--2277.

\bibitem{CaoMenPha19}
{\sc Cao, D., Mengesha, T., and Phan, T.}
\newblock Gradient estimates for weak solutions of linear elliptic systems with
  singular-degenerate coefficients.
\newblock In {\em Nonlinear dispersive waves and fluids}, vol.~725 of {\em
  Contemp. Math.} Amer. Math. Soc., Providence, RI, 2019, pp.~13--33.

\bibitem{Cha15}
{\sc Chansangiam, P.}
\newblock A survey on operator monotonicity, operator convexity, and operator
  means.
\newblock {\em Int. J. Anal.\/} (2015), Art. ID 649839, 8.

\bibitem{CiaMaz19}
{\sc Cianchi, A., and Maz'ya, V.~G.}
\newblock Optimal second-order regularity for the {$p$}-{L}aplace system.
\newblock {\em J. Math. Pures Appl. (9) 132\/} (2019), 41--78.

\bibitem{CGP19}
{\sc Clop, A., Giova, R., and Passarelli~di Napoli, A.}
\newblock Besov regularity for solutions of {$p$}-harmonic equations.
\newblock {\em Adv. Nonlinear Anal. 8}, 1 (2019), 762--778.

\bibitem{CruMoeNai13}
{\sc Cruz-Uribe, D., Moen, K., and Naibo, V.}
\newblock Regularity of solutions to degenerate {$p$}-{L}aplacian equations.
\newblock {\em J. Math. Anal. Appl. 401}, 1 (2013), 458--478.

\bibitem{CruMoeRod16}
{\sc Cruz-Uribe, D., Moen, K., and Rodney, S.}
\newblock {$A_p$} weights, degenerate {S}obolev spaces and mappings of finite
  distortion.
\newblock {\em J. Geom. Anal. 26}, 4 (2016), 2797--2830.

\bibitem{DiF96}
{\sc Di~Fazio, G.}
\newblock {$L^p$} estimates for divergence form elliptic equations with
  discontinuous coefficients.
\newblock {\em Boll. Un. Mat. Ital. A (7) 10}, 2 (1996), 409--420.

\bibitem{DiFFanZam13}
{\sc Di~Fazio, G., Fanciullo, M.~S., and Zamboni, P.}
\newblock {$L^p$} estimates for degenerate elliptic systems with {VMO}
  coefficients.
\newblock {\em Algebra i Analiz 25}, 6 (2013), 24--36.

\bibitem{DiBMan93}
{\sc DiBenedetto, E., and Manfredi, J.}
\newblock On the higher integrability of the gradient of weak solutions of
  certain degenerate elliptic systems.
\newblock {\em Amer. J. Math. 115}, 5 (1993), 1107--1134.

\bibitem{DieEtt08}
{\sc Diening, L., and Ettwein, F.}
\newblock Fractional estimates for non-differentiable elliptic systems with
  general growth.
\newblock {\em Forum Mathematicum 20}, 3 (2008), 523--556.

\bibitem{DieForTomWan19}
{\sc Diening, L., Fornasier, M., Tomasi, R., and Wank, M.}
\newblock A relaxed {K}a{\v c}anov iteration for the $p$-{P}oisson problem.
\newblock {\em Numerische Mathematik\/} (2019), accepted.

\bibitem{DieKapSch12}
{\sc Diening, L., Kaplick\'{y}, P., and Schwarzacher, S.}
\newblock B{MO} estimates for the {$p$}-{L}aplacian.
\newblock {\em Nonlinear Anal. 75}, 2 (2012), 637--650.

\bibitem{DieRuzSch10}
{\sc Diening, L., R{\r u}\v{z}i\v{c}ka, M., and Schumacher, K.}
\newblock A decomposition technique for {J}ohn domains.
\newblock {\em Ann. Acad. Sci. Fenn. Math. 35}, 1 (2010), 87--114.

\bibitem{DieSV09}
{\sc Diening, L., Stroffolini, B., and Verde, A.}
\newblock Everywhere regularity of functionals with {$\varphi$}-growth.
\newblock {\em Manuscripta Math. 129}, 4 (2009), 449--481.

\bibitem{DreDur08}
{\sc Drelichman, I., and Dur\'{a}n, R.~G.}
\newblock Improved {P}oincar\'{e} inequalities with weights.
\newblock {\em J. Math. Anal. Appl. 347}, 1 (2008), 286--293.

\bibitem{DuzMin10}
{\sc Duzaar, F., and Mingione, G.}
\newblock Gradient estimates via linear and nonlinear potentials.
\newblock {\em J. Funct. Anal. 259}, 11 (2010), 2961--2998.

\bibitem{FabKenSer82}
{\sc Fabes, E.~B., Kenig, C.~E., and Serapioni, R.~P.}
\newblock The local regularity of solutions of degenerate elliptic equations.
\newblock {\em Comm. Partial Differential Equations 7}, 1 (1982), 77--116.

\bibitem{GarJos89}
{\sc Garc\'{\i}a-Cuerva, J.}
\newblock Hardy spaces and {B}eurling algebras.
\newblock {\em J. London Math. Soc. (2) 39}, 3 (1989), 499--513.

\bibitem{Giu03}
{\sc Giusti, E.}
\newblock {\em Direct methods in the calculus of variations}.
\newblock World Scientific Publishing Co., Inc., River Edge, NJ, 2003.

\bibitem{Gra14modern}
{\sc Grafakos, L.}
\newblock {\em Modern {F}ourier analysis}, third~ed., vol.~250 of {\em Graduate
  Texts in Mathematics}.
\newblock Springer, New York, 2014.

\bibitem{GreVer00}
{\sc Greco, L., and Verde, A.}
\newblock A regularity property of {$p$}-harmonic functions.
\newblock {\em Ann. Acad. Sci. Fenn. Math. 25}, 2 (2000), 317--323.

\bibitem{HeiKilMar06}
{\sc Heinonen, J., Kilpel\"{a}inen, T., and Martio, O.}
\newblock {\em Nonlinear potential theory of degenerate elliptic equations}.
\newblock Oxford Mathematical Monographs. The Clarendon Press, Oxford
  University Press, New York, 1993.
\newblock Oxford Science Publications.

\bibitem{Iwa83}
{\sc Iwaniec, T.}
\newblock Projections onto gradient fields and {$L^{p}$}-estimates for
  degenerated elliptic operators.
\newblock {\em Studia Math. 75}, 3 (1983), 293--312.

\bibitem{IwaSbo94}
{\sc Iwaniec, T., and Sbordone, C.}
\newblock Weak minima of variational integrals.
\newblock {\em J. Reine Angew. Math. 454\/} (1994), 143--161.

\bibitem{IwaSbo98}
{\sc Iwaniec, T., and Sbordone, C.}
\newblock Riesz transforms and elliptic {PDE}s with {VMO} coefficients.
\newblock {\em J. Anal. Math. 74\/} (1998), 183--212.

\bibitem{KinZho97}
{\sc Kinnunen, J., and Zhou, S.}
\newblock A note on very weak {$p$}-harmonic mappings.
\newblock {\em Electron. J. Differential Equations\/} (1997), No. 25, 4.

\bibitem{KinZho99}
{\sc Kinnunen, J., and Zhou, S.}
\newblock A local estimate for nonlinear equations with discontinuous
  coefficients.
\newblock {\em Comm. Partial Differential Equations 24}, 11-12 (1999),
  2043--2068.

\bibitem{KinZho01}
{\sc Kinnunen, J., and Zhou, S.}
\newblock A boundary estimate for nonlinear equations with discontinuous
  coefficients.
\newblock {\em Differential Integral Equations 14}, 4 (2001), 475--492.

\bibitem{KuuMin12}
{\sc Kuusi, T., and Mingione, G.}
\newblock Universal potential estimates.
\newblock {\em J. Funct. Anal. 262}, 10 (2012), 4205--4269.

\bibitem{KuuMin13}
{\sc Kuusi, T., and Mingione, G.}
\newblock Linear potentials in nonlinear potential theory.
\newblock {\em Arch. Ration. Mech. Anal. 207}, 1 (2013), 215--246.

\bibitem{KuuMin14}
{\sc Kuusi, T., and Mingione, G.}
\newblock A nonlinear {S}tein theorem.
\newblock {\em Calc. Var. Partial Differential Equations 51}, 1-2 (2014),
  45--86.

\bibitem{KuuMin18}
{\sc Kuusi, T., and Mingione, G.}
\newblock Vectorial nonlinear potential theory.
\newblock {\em J. Eur. Math. Soc. (JEMS) 20}, 4 (2018), 929--1004.

\bibitem{Mey63}
{\sc Meyers, N.~G.}
\newblock An {$L^{p}$}e-estimate for the gradient of solutions of second order
  elliptic divergence equations.
\newblock {\em Ann. Scuola Norm. Sup. Pisa Cl. Sci. (3) 17\/} (1963), 189--206.

\bibitem{Mor66}
{\sc Morrey, Jr., C.~B.}
\newblock {\em Multiple integrals in the calculus of variations}.
\newblock Die Grundlehren der mathematischen Wissenschaften, Band 130.
  Springer-Verlag New York, Inc., New York, 1966.

\bibitem{MucWhe74}
{\sc Muckenhoupt, B., and Wheeden, R.}
\newblock Weighted norm inequalities for fractional integrals.
\newblock {\em Trans. Amer. Math. Soc. 192\/} (1974), 261--274.

\bibitem{Rou03}
{\sc Roudenko, S.}
\newblock Matrix-weighted {B}esov spaces.
\newblock {\em Trans. Amer. Math. Soc. 355}, 1 (2003), 273--314.

\bibitem{Stein93harmonic}
{\sc Stein, E.~M.}
\newblock {\em Harmonic analysis: real-variable methods, orthogonality, and
  oscillatory integrals}, vol.~43 of {\em Princeton Mathematical Series}.
\newblock Princeton University Press, Princeton, NJ, 1993.
\newblock With the assistance of Timothy S. Murphy, Monographs in Harmonic
  Analysis, III.

\end{thebibliography}
\end{document}